\theoremstyle{plain}
\theoremstyle{plain}\newtheorem{assumption}{Assumption}
\crefname{assumption}{Assumption}{Assumptions}
\newtheorem{theorem}{Theorem}[section]
\newtheorem{corollary}[theorem]{Corollary}
\newtheorem{proposition}[theorem]{Proposition}
\newtheorem{remark}[theorem]{Remark}
\newcommand{\argmin}[1]{\underset{#1}
{\operatorname{arg}\!\operatorname{min}}\;}
\newcommand{\E}{\mathbb{E}}
\newcommand{\R}{\mathbb{R}}
\newcommand{\N}{\mathbb{N}}
\newcommand{\w}{\widehat}
\newcommand{\id}[1]{\ensuremath{\mathbbm{1}_{#1}}}
\renewcommand{\P}{\mathbb{P}}
\newcommand{\tr}{\text{Tr}}
\newcommand{\nlip}[1]{\| #1 \|_{\text{Lip}}}
\newcommand{\bt}{b_{\theta}}
\newcommand{\bto}{b_{\theta_0}}
\newcommand{\bth}{b_{\w\theta_L}}
\newcommand{\nop}[1]{\| #1 \|_{\text{op}}}
\newcommand{\dn}{\Delta_n}
\newcommand{\M}[2]{\mathcal{M}_{#1 \times #2}(\R)}
\title{Sampling effects on Lasso estimation of drift functions in high-dimensional diffusion processes}
\author{Chiara Amorino$^{\dagger}$, Francisco Pina,  Mark Podolskij$^{*}$}
\date{\today}
\begin{document}
\maketitle

\footnote{
The authors gratefully acknowledge financial support of ERC Consolidator Grant 815703 “STAMFORD: Statistical Methods for High Dimensional Diffusions” and  the PRIDE Grant “MATHCODA: Mathematical Tools for Complex Data Structures”.\\

{$^\dagger$ Universitat Pompeu Fabra, Barcelona (Spain) }

$^{*}$Universit\'e du Luxembourg, Esch-sur-alzette (Luxembourg)
}

\begin{abstract}
In this paper, we address high-dimensional parametric estimation of the drift function in diffusion models, specifically focusing on a $d$-dimensional ergodic diffusion process observed at discrete time points. We consider both a general linear form for the drift function and the particular case of the  Ornstein-Uhlenbeck (OU) process. Assuming sparsity of the parameter vector, we examine the statistical behavior of the Lasso estimator for the unknown parameter. Our primary contribution is the proof of an oracle inequality for the Lasso estimator, which holds on the intersection of three specific sets  defined for our analysis. We carefully control the probability of these sets, tackling the central challenge of our study. This approach allows us to derive error bounds for the $l_1$ and $l_2$ norms, assessing the performance of the proposed Lasso estimator. Our results demonstrate that, under certain conditions, the discretization error becomes negligible, enabling us to achieve the same optimal rate of convergence as if the continuous trajectory of the process were observed. We validate our theoretical findings through numerical experiments, which show that the Lasso estimator significantly outperforms the maximum likelihood estimator (MLE) in terms of support recovery. \\

\noindent
 \textit{Keywords:} High dimensional statistics, drift parameter estimation, Lasso estimator, high frequency, concentration inequality.  \\ 
 
\noindent
\textit{AMS 2010 subject classifications:} Primary 62M05, 60G15; secondary: 62H12.

\end{abstract}

\tableofcontents

\section{Introduction}

Over the last few decades, there has been a growing interest in statistical inference for stochastic processes, driven by the extensive range of applications of these models across various fields. For instance, stochastic processes are employed in biology \cite{Ric77}, physics \cite{Pap95} and epidemiology \cite{Bai57}. Other notable applications include neurology \cite{Hol76} and biomedical sciences \cite{Ban75}, as well as economics \cite{Ber90} and mathematical finance \cite{Hul00}. Given the importance of stochastic differential equations (SDEs), extensive research has been conducted on inference for these models. Scholars have investigated both parametric and non-parametric inference methods, utilizing either continuous or discrete observations, and within various asymptotic frameworks.

In recent years, the rise in computational power has spurred significant interest in high-dimensional statistical models. Researchers have focused on understanding statistical challenges in contexts where the number of model parameters far exceeds the number of observations. For classical models such as linear regression, issues like constructing procedures that are both computationally efficient and exhibit optimal statistical performance (quantified by convergence rates) are now well understood. However, deep statistical results for high-dimensional modeling of continuous-time processes are limited, despite their significant application potential.

High-dimensional diffusions have been extensively studied in the context of particle systems within mean field theory. Foundational probabilistic results in this area can be found in classical works such as \cite{McK66, Sni91}, while applications of particle systems across various scientific fields are explored in \cite{Bol11,Car16}, among others.
More recently, non-parametric and parametric estimation for large particle systems and their mean field limits has been investigated in studies such as \cite{Amo24, BPP, BPZ24, Pino, Nic24} and \cite{Amo23,GenLar20, Kas90, Imp21}, respectively. 

Consider a $d$-dimensional ergodic diffusion process that is the solution to the stochastic differential equation 
\begin{equation}\label{model}
    dX_t = -\bt(X_t)dt + dW_t \hspace{2cm} t\geq 0,
\end{equation}
where $\theta \in \Theta \subseteq \R^p$ is the model parameter and $\Theta$ is a compact and convex set, $\bt:\R^d \rightarrow \R^d$ is the drift function of the process and $W = (W_t)_{t \geq 0 }$  a $d-$dimensional Brownian motion. The initial value $X_{0}$ is a random vector independent of $W$.  We focus on the case where the underlying observations of the process $(X_{t_i})_{i=0,\dots,n-1}$ are discrete, where $t_j := j T/n$, $T$ is the time horizon and $\Delta_n := T/n$ the discretization step. We assume that $p, d,n$ and $T$ are large.

In the realm of high-dimensional statistics, several methods have been developed to obtain  estimation results. One prevalent approach for conducting parametric inference within the context of large-scale systems is by assuming sparsity in the parameter. In our presented work, we will assume that the true parameter of $\eqref{model}$, that we will denote by $\theta_0$, satisfies the sparsity constraint:
\begin{equation}\label{sparsityConstraint}
    \| \theta_0\|_0 := \#\{ 1 \leq i \leq p \hspace{0.2cm}| \hspace{0.2cm} \theta_{0}^i\neq 0\} = s.
\end{equation} 
The sparsity parameter is significantly smaller than the dimensionality of the parameter space. Specifically, throughout this work, we assume that the sparsity $s$ remains constant, while the parameter dimension $p$ can increase towards infinity. Furthermore, we assume that the maximum of the non-zero components is uniformly bounded with respect to 
$p$.

In the literature, a diverse array of methodologies aims to leverage the sparsity constraint of the parameter to significantly improve estimator performance. Specifically, we aim to investigate the statistical behavior of the Lasso (Least Absolute Shrinkage and Selection Operator) estimator for the unknown parameter $\theta_0$. The Lasso estimator was introduced in \cite{Tib96} and has been widely used since.

The Lasso approach is indeed a valuable and well-studied method for model selection, with the significant advantage of simultaneously performing parameter estimation and variable selection (see \cite{EHJT04,Kni00,Tib96}). It allows the dimensionality of the parameter space to change with the sample size, offering an advantage over classical information criteria such as AIC, BIC. Typically, the Lasso method involves minimizing an $l_2$ norm subject to $l_1$ norm constraints on the parameters, implying a least squares or maximum likelihood approach with additional constraints. A crucial property of the Lasso approach is the oracle property (see \cite{FaLi01}), which ensures that the correct parameters are set to zero under the true data-generating model. Originally, the Lasso procedure was introduced for linear regression problems. However, in recent years, this approach has been applied to various fields of stochastic processes. For example, in \cite{Wan07}, the problem of shrinkage estimation of regressive and autoregressive coefficients is addressed, while \cite{Nar11} studies penalized order selection for $AR(\rho)$ models. Furthermore, \cite{Bas15} demonstrates that the Bridge estimator, that is a generalization of the Lasso approach by using $l_q$ norms, can differentiate between stationarity and unit root-type non-stationarity and select the optimal lag in AR series. For additional insights into penalized estimation problems in time series analysis, see \cite{Can13}.

Recently, regularized estimators have been applied to multidimensional diffusion processes and point processes, representing a new research topic in the field of statistics for stochastic processes. In the high-frequency framework, for instance, penalized selection procedures are used to uncover the underlying true model, as discussed in \cite{Iac12,Iaf20,Mas17,Suz19}. Moreover, in \cite{Iac18}, the authors study penalized estimation for dynamical systems with small noise.

In the setting of continuous observations of an Ornstein–Uhlenbeck (OU) process and under sparsity constraints in the interaction matrix, high dimensional estimation of the drift parameter is investigated in \cite{CMP20, Gai19}. The proposed estimators are of Lasso-  and Dantzig-types. Similarly, \cite{Dex22} addresses the problem of estimating the drift parameter in a L\'evy-driven OU process.

Our paper is closely related to the work \cite{marklasso}, which investigates high-dimensional drift estimation in the model \eqref{model} under continuous time observations. This naturally raises the question of the impact of discretization on the estimation procedure and whether it is possible to achieve similar results as in the continuous case, given certain conditions on the rate at which the discretization step approaches zero.
 
To address this, {we concentrate our analysis on two primary settings. First, we investigate general linear models. Under appropriate assumptions on the drift function, this framework enables a clear identification of discretization effects and provides a deeper understanding of the role played by the various model parameters. In this context, the drift function $\bt$ is expressed as
\begin{equation}\label{linearModel}
    \bt = \phi_0 + \sum_{j=1}^{p} \theta_j \phi_j,
\end{equation}
where $\phi_j : \R^d \rightarrow \R^d$  for all $j=0,\dots,p$, and   $\theta_j > 0$ for $j=1,\dots,p.$ 
Second, we examine the case of the Ornstein–Uhlenbeck process, a canonical model in the high-dimensional inference literature due to its analytical tractability and wide range of applications. This model requires a distinct mathematical approach, primarily because of the unique behavior of concentration inequalities in the OU setting.}
In this context, we are able to prove an oracle inequality for the Lasso estimator (see Theorem \ref{TheoremOracleInequality}), which holds true on the intersection of three sets specifically tailored for our purposes. We will rigorously control the probability of these three sets in Section \ref{sectionControllingProbabilities}, addressing one of the main challenges of our paper.
We will show that the first set arises from the martingale component of the stochastic differential equation, with a continuous version detailed in \cite{marklasso}. The second set results from discretization error. We will demonstrate that the dominance of either the discretization error or the martingale component will determine which set is more significant. Specifically, we will derive a condition involving $\Delta_n$, $T$, $p$, $s$ and $d$ that must be satisfied to ensure that discretization does not affect the convergence rate. The third set pertains to the model design and includes a version of the \textit{compatibility condition}.

To prove our main results and control the probabilities of the aforementioned sets, it is essential to use concentration inequalities. As detailed in Section \ref{s: CI}, a crucial aspect of our analysis involves tracking the dependence on the step size $\Delta_n$, the dimension of the diffusion $d$, and the parameter $p$.
{While concentration bounds have been widely studied in the literature, the explicit dependence on the discretization step appears to have received little attention. This gap motivates the development of two distinct concentration inequalities: one for the general linear model, derived using martingale decomposition techniques, and another for the Ornstein–Uhlenbeck process, obtained via Malliavin calculus. These results are stated in Section \ref{s: CI}.}

Thanks to the approach summarized above, we are able to deduce error bounds for the $l_1$ and $l_2$ norms, evaluating the performance of the Lasso estimator we proposed. In particular, {for the general linear drift,} assuming that $(d\ln(p))/(n \Delta_n) \rightarrow 0$, we have that if $1/ (s^2 d n \dn^2) \rightarrow 0$, then the error term dominates the convergence of the estimator and 
  $$ \|\widehat{\theta}_L -\theta_0 \|_2 = O_{\P}\left(\frac{d }{k^2} \sqrt{s^3 \dn \ln(p)}\right).$$
 If instead $1/ (s^2 d n \dn^2) \rightarrow \infty$, then the error term is negligible and $$  \|\widehat{\theta}_L -\theta_0 \|_2 = O_{\P}\left(\frac{1}{k^2}\sqrt{\frac{ d \ln(p) s}{n \dn}}\right).$$
The constant $k$ above depends on the model and, in the case where $p \le d$, its magnitude is often $\sqrt{d}$. Therefore, when the discretization error is negligible, and recalling that $n \Delta_n = T$, we recover the same optimal rate as in the scenario where a continuous trajectory of the process was available (see \cite{marklasso}).
Our bounds are notably sharp, demonstrated by the logarithmic dependence on $p$. 

It is also worth emphasizing that our condition for determining whether the contribution of the discretization error or the martingale part is dominant is given by the asymptotic behavior of $s^2 d n \Delta_n^2$. Notably, when $d$, $s$ and $p$ are constants, the constraint for obtaining a negligible discretization error simplifies to $n \Delta_n^2 \rightarrow 0$. This condition is consistent with classical assumptions in the literature for parameter estimation of diffusion processes (see Remark \ref{rk: condition step} below for details).

{We then extend these conclusions to the Ornstein–Uhlenbeck case. As demonstrated in our analysis, the results remain essentially comparable, with the main distinction arising from the influence of the sparsity parameter. Due to the specific structure of the OU process, this parameter cannot be tracked as explicitly as in the general linear setting within our framework. Additionally, the use of a dedicated concentration inequality derived via Malliavin calculus represents one of the key methodological differences and directly impacts the resulting bounds. However, the convergence rates are equivalent across both models, with the sampling effect having the same impact in both cases.}

Moreover, to illustrate our main results, we present numerical experiments for the linear model. Specifically, we evaluate the performance of the Lasso and the MLE in a high-dimensional situation, and compare the mean $l_1$ and $l_2$ error for different values of the dimension of the true parameter. Our results show that the proposed estimator performs well. In particular, we observe that the performance of the Lasso estimator, especially in terms of support recovery, is significantly superior to that of the maximum likelihood estimator, even for a relatively small dimension of the true parameter.

{The outline of the paper is as follows. In Section \ref{ss: notation} and Section \ref{ss: ass}, we introduce the notation and the assumptions used throughout the paper. Section \ref{s: main} presents our main results, together with a detailed discussion of the methodology employed. In Section \ref{s: CI}, we establish the concentration inequalities used in our analysis, covering both the linear and the Ornstein–Uhlenbeck (OU) cases. Section \ref{s: num} validates our theoretical findings through numerical simulations. Section \ref{s: CAO} summarizes the results and outlines directions for future research. Finally, Section \ref{s: proof main} contains the proofs of the main results and of the concentration inequalities.}

\subsection{Notation}{\label{ss: notation}}
Before presenting our results, we will briefly introduce the  notation used throughout the paper.
All random variables and stochastic processes are defined on a filtered probability space \( (\Omega, \mathcal{F}, (\mathcal{F}_t)_{t \geq 0}, \P) \).
All vectors are understood as column vectors.  Given a vector $x \in \R^d$, we will denote by $x^i$ the $i$-th component of the vector and we will write $\|x\|_q$ for $q \in [1,\infty]$ to express the $l_q-$norm of $x$: 
\begin{align*}
    &\|x\|_q := \left( \sum_{i=1}^d |x^i|^q\right)^{\frac{1}{q}} \text{for   } q \in [0,\infty)
    &\|x\|_\infty := \sup_{1 \leq i \leq d} |x^i|.
\end{align*}
{
For a matrix $A \in \M{d_1}{d_2}$, we extend the $l_q$-norm entrywise, and write
\begin{align*}
&\|A\|_q := \left( \sum_{i=1}^{d_1} \sum_{j=1}^{d_2} |a_{ij}|^q \right)^{1/q} \quad \text{for } q \in [1,\infty),
&\|A\|_\infty := \max_{\substack{1 \leq i \leq d_1 \\ 1 \leq j \leq d_2}} |a_{ij}|,
\end{align*}
while  $\nop{A}$ denotes the operator norm. We denote by $A^{\star}$ its transpose and by vec$(A)$, the vector in $\R^{d_1d_2}$ obtained by stacking the columns of  A.}

For a function $f: \Theta \times \R^d \rightarrow \R^d$, we denote by $\dot{f}$ the derivative of $f$ with respect to the parameter $\theta \in \Theta$. We say a function $f: \R^d \rightarrow \R^d$ has polynomial growth when $\| f(x) \|_2 \leq C(1 + \|x\|_2^q)$ for some $q,C>0$. For a Lipschitz continuous function $f$, we  denote by $\nlip{f}$ the Lipschitz norm, and  by $C(\R^+; \R^d)$ the class of continuous functions $f:\R^+ \rightarrow \R^d$.

Given $X_{t_0}, \dots,X_{t_{n-1}}$ random vectors observed in equidistant  time intervals $t_0,\dots t_{n-1}$, we denote by $t_{i} - t_{i-1}=\dn := T/n$ the discretization step.  For any process $X$ we use the notation $\Delta X_i := X_{t_i} - X_{t_{i-1}}$. For two vectors $x,y \in \R^d$, we denote by $\langle x,y \rangle$ the scalar product. 

We introduce the function $\Phi(x): = (\phi_1(x),\dots,\phi_p(x))\in \R^{d\times p}$ whose $j$th column is the $\phi_j$ function considered in \eqref{linearModel}.
We define the Fisher information of the process by 
\begin{equation}\label{fisherInfo}
        I(\theta) = \E_{\theta}\left[ \dot{\bt}(X_{t_0})^{\star} \dot{\bt}(X_{t_0})\right].
\end{equation}
 Let us denote by $l_{\text{min}}(\theta)$ the smallest eigenvalue of  $I(\theta)$. It is worth noting that, under our hypothesis, the drift is linear, meaning its derivative does not depend on \(\theta\). Consequently, the Fisher information simplifies to \(\E_{\theta}\left[ \Phi(X_{t_0})^{\star} \Phi(X_{t_0}) \right]\). Furthermore, we will denote the Fisher information under the true value of the parameter \(\theta_0\) simply as \(I\).
 
 For a set $\mathcal{S} \subset \{1, \dots, d\}$ and a vector $x \in \R^d$ we will use the notation
\begin{equation*}
    \left(x|_{\mathcal{S}}\right)^j := x^j \id{\{j \in \mathcal{S}\}} \hspace{1.5cm} j=1,\dots,d,
\end{equation*}
{and analogously, for a matrix $A \in \M{d_1}{d_2}$, we define the notation for the restriction of the matrix $A$ to the set $\mathcal{S}$ entrywise as follows:
\begin{equation*}
    \left(A|_{\mathcal{S}}\right)_{ij} := a_{ij} \id{\{i,j\in \mathcal{S}\}} \hspace{1.5cm} i = 1,\dots, d_1,~ j = 1, \dots, d_2.
\end{equation*}
}
We define the set $\mathcal{C}(s, c)$ as
\begin{equation}\label{cone}
    \mathcal{C}(s, c) := \left\{ x \in \R^d \hspace{0.1cm}\backslash \hspace{0.1cm} \{0\} : \|x\|_1 \leq (1+c)\| x|_{\mathcal{I}_s(x)}\|_1\right\},
\end{equation}
where $\mathcal{I}_s(x)$ is the set of the $s$ largest elements of $x$, {and its definition can be straightforwardly extended to matrices.} For an element $y \in \mathbb{C}$, we denote by Re($y$) its real part.

\subsection{Assumptions}{\label{ss: ass}}
 In this section, we introduce the main assumptions on the model $\eqref{model}$. 
 \subsubsection{Assumptions for the general linear drift}
 {As previously mentioned, we first present the results related to the general linear drift, that is, when the drift function in \eqref{model} takes the form given in \eqref{linearModel}. In this context}, we assume the following:
 
 \begin{assumption}\label{A1} 
 For all $j=0,\dots,p$, the functions $\phi_j: \R^d \rightarrow \R^d$ are uniformly Lipschitz in $p$ and $d$, and we denote by  $L_j:=\|\phi_j\|_{\text{\rm Lip}}$ their Lipschitz norms.    
 \end{assumption}

 \begin{assumption}\label{A2} There exists a constant $M > 0$ such that for all $x,y \in \R^d$ and $\theta \in \Theta$, we have that 
\begin{equation*}
\left\langle b_{\theta}(x) - b_{\theta}(y),x-y\right\rangle \geq M\|x-y\|_2^2.
\end{equation*}
\end{assumption}

\noindent
\cref{A1} and \cref{A2} imply that equation $\eqref{model}$ admits a unique strong solution $X_t$, which is a homogeneous continuous Markov process with an invariant distribution (cf. \cite{Rogers_Williams_2000}, Theorem 12.1). This solution has bounded moments of any order. Henceforth, we will assume that these moments are uniformly bounded in all quantities of interest.

\begin{assumption}\label{A4}   For all $i,j=1,\dots,p$ the functions $\left\langle \phi_i , \phi_j\right\rangle:\R^d \rightarrow \R$ are assumed to be Lipschitz continuous, with constants uniformly bounded in $p$ and $d$, and we denote  $L_{ij}:=\|\left\langle \phi_i , \phi_j\right\rangle\|_{\text{\rm Lip}}$.
\end{assumption}

\begin{remark}\label{remarkBoundedUnif} 
\normalfont
\cref{A1} implies that the drift function \( b_{\theta}:\R^d \longrightarrow \R^d \) of the diffusion process is Lipschitz, and we denote its Lipschitz norm by \( L \). Notably, \cref{A1} implies that \( L \leq (L_0 + \sum_{j = 1}^p \theta_j L_j) \), indicating that the drift function is not uniformly Lipschitz in the variables of interest. However, we will utilize the Lipschitz property of \( b \) only in the proof of the concentration inequality in Section \ref{s: CI}, where the drift is evaluated at the true parameter value \(\theta_0\). In this context, \( b \) has a Lipschitz constant \( L_0 + s \max_k \theta_0^k L_k \), which is uniformly bounded in $p$ and $d$, under our assumptions.
 Moreover, since $\bt$ is of the form \eqref{linearModel}, the drift function is continuously differentiable in  $\theta$ and  $\dot{\bt}(\cdot)$ has polynomial growth. We also point out that, in terms of the drift function, \cref{A4} implies that the functions $( \dot{\bt}^{\star}\dot{\bt})_{i,j}: \R^d \rightarrow \R$ are Lipschitz continuous. 
 \end{remark}

\begin{remark}\label{remarkAssumptionLipschitz}
\normalfont
It is important to note that the assumption of Lipschitz continuity for the functions $\langle\phi_i, \phi_j \rangle$ does not hold in the case of the OU process. \cref{A4} plays a crucial role in deriving the concentration inequality that we employ in the general linear model. This significant difference motivates us to treat the OU process separately, leading to the use of alternative techniques, such as Malliavin calculus, to derive the corresponding concentration inequality.
\end{remark}

\begin{assumption}\label{A5} Given $\theta \in \R^p$ such that $\|\theta\|_0 = s$ and $\vartheta \in \R^p $ satisfying  $\theta - \vartheta \in \mathcal{C}(s, 3 + 4/\gamma)$ (defined in \eqref{cone}), for an arbitrary $\gamma >0$, there exists a constant $l >0 $ such that 
$$\frac{(\theta - \vartheta)^{\star}I(\theta - \vartheta)}{\|\theta - \vartheta\|_2^2} > l.$$
\end{assumption}
 
 \begin{remark}\label{remark3}
 \normalfont
\cref{A5} can be seen as a relaxation of the strong convexity condition of the Fisher information, particularly when the drift function takes the form given by \eqref{linearModel}. This assumption draws inspiration from similar conditions discussed in \cite{sara11,tibshi}. Our motivation stems from the need to provide results applicable to real high-dimensional settings where $p > d$.

Similar assumptions have also been considered in \cite{marklasso}. However, by leveraging the independence of the Fisher information with respect to the parameter and the geometric property of the Lasso error—which asserts that the estimation error lies within a subset of $\mathbb{R}^p$ as defined in \eqref{cone} (see \cite{tibshi}, Chapter 10 for further details)—we can formulate our assumption within a more general framework that captures the high-dimensional landscape. Specifically, \cref{A5} avoids imposing a condition of the type $l_{\min}(\theta) > 0$ for all $\theta \in \Theta$, which may not hold when the information matrix is degenerate. Instead, it replaces this with an identifiability condition on $I$ for a specific subspace in $\mathbb{R}^p$. 
\end{remark}

\subsubsection{{Assumptions for the OU case}}\label{sss: assumptionsOU} 
We now turn our attention to the Ornstein–Uhlenbeck process. Specifically, we consider a \( d \)-dimensional process defined as the solution to the stochastic differential equation  
\begin{equation}\label{OUdefinition}
    dX_t = -A_{0}X_t\,dt + dW_t,  \hspace{1cm} t \geq 0.
\end{equation}  
 Here, \( A_{0} \in \mathbb{A} \subseteq \mathbb{M}_{d \times d} \) denotes the interaction matrix of the process, and \( W \) is a \( d \)-dimensional Brownian motion.
In this setting, we make the following assumptions:
 \begin{assumption}\label{AOU} 
The interaction matrix \( A_0 \) is assumed to be diagonalizable, with eigenvalues \( \tau_1, \dots, \tau_d \in \mathbb{C} \). Moreover, all eigenvalues have strictly positive real parts, and we define  
\[
\mathfrak{m} := \min_{1 \leq i \leq d} \text{Re}(\tau_i) > 0.
\]  
 It implies that  
 \begin{equation}
     A_0 = P_0 \text{diag}(\tau_1, \dots, \tau_d){P_0}^{-1}, 
 \end{equation}
 where the column vectors of $P_0$ are the eigenvectors of $A_0$. Moreover, we denote by $\mathfrak{p}_0:= \|P_0\|_{\text{op}}\|P_0^{
 -1
 }\|_{\text{op}}$.
 \end{assumption}
\noindent
Under \cref{AOU}, the stochastic differential  equation \eqref{OUdefinition} exhibits a unique stationary solution, which can be written explicitly as 
\begin{equation*}
    X_t = \int_{-\infty}^t\exp(-(t-s)A_0)dW_t.
\end{equation*}
In this situation, it is well known that 
\begin{equation*}
    X_t \sim \mathcal{N}(0, C_{\infty}), \hspace{1cm } C_{\infty} := \int_0^\infty \exp{(-sA_0)}\exp{(-sA_0^{\star})}ds.
\end{equation*}
\begin{assumption}\label{LminLmaxCinf}
The  covariance matrix \( C_{\infty} \) is assumed to be non-degenerate. We let \( 0 < \mathfrak{l}_{\min} < \mathfrak{l}_{\max} < \infty \) denote its minimum and maximum eigenvalues, respectively.
\end{assumption}

\begin{remark}
\normalfont
Since the OU process has a linear drift, the interaction matrix \(A_0\) 
can be expressed in the general form~\eqref{linearModel}. In particular, by taking \(p = d^2\) 
and choosing the functions
\begin{equation}\label{phiforOU}
\phi_{ij}(x) = E_{ij} x, \quad 1 \leq i,j \leq d,
\end{equation}
where \(E_{ij}\) denotes the \(d \times d\) matrix with a single one at entry \((i,j)\) 
and zeros elsewhere, enumerating the pairs \((i,j)\) row-wise 
yields the family \(\{\phi_i\}_{i=1}^{d^2}\). The drift can then be written as
\[
A_0 x = \sum_{i=1}^d \sum_{j=1}^d a_{ij} \, \phi_{ij}(x),
\]
where \(a_{ij}\) denotes the \((i,j)\)-th entry of \(A_0\), 
playing a role analogous to the coefficients \(\theta_i\) in~\eqref{linearModel}. 
\\
Considering the functions \(\phi_{ij}\) as in~\eqref{phiforOU}, we can draw a parallel between the assumptions of the two cases. First, we observe that for the OU case, \cref{A1} 
is directly satisfied with \(L_{ij} = 1\). On the other hand, both \cref{A2} and \cref{AOU} capture the same stabilizing effect of the drift.
While \cref{A2} expresses this through a monotonicity condition, 
\cref{AOU} formulates it in spectral terms, providing a more general condition that includes a wider class of interaction matrices 
and aligns naturally with the classical OU literature. Finally, we note that  \cref{A5} and \cref{LminLmaxCinf} are also related, both reflecting the idea of identifiability. In this context, \cref{A5} provides a more general formulation for linear models, as explained in Remark \ref{remark3}, 
while \cref{LminLmaxCinf} aligns with the standard assumptions used in the OU literature. Finally, unlike in the general linear model, the OU process does not satisfy \cref{A4} (see Remark \ref{remarkAssumptionLipschitz}). Therefore, these differences motivate us to present the results for the two models separately, in order to clearly convey and facilitate their understanding.
\end{remark}
\noindent
As in the linear case,  we consider the setting where the process  is observed at discrete time points $(X_{t_i})_{i=0,\dots,n-1}$, with $t_j := j T/n$,  where $T$ denotes the time horizon and $\Delta_n := T/n$ the discretization step. Directly related with the observations of the process, we define the empirical covariance matrix of the OU as 
\begin{equation}
    C_{T} := \frac{1}{n}\sum_{i=1}^n X_{t_{i-1}}X_{t_{i-1}}^{\star},
\end{equation}
which serves as the analogue of the Gram matrix in linear regression models and constitutes a key quantity in the subsequent analysis.
\section{Main results}{\label{s: main}}
In this section, we present and discuss the main results of our paper, starting with the general linear model and subsequently addressing the OU case.

\subsection{Main results for the general linear case}
{We begin by considering the case in which the drift function satisfies the general linear model \eqref{linearModel}. Our focus lies in the high-dimensional regime, where \( p \), \( d \), \( n \), and \( T \) are all large, and we assume that the unknown parameter \( \theta_0 \) satisfies the sparsity constraint \eqref{sparsityConstraint}.
In the idealized setting where the full trajectory of the process \( (X_t)_{t \geq 0} \) is observed, Girsanov’s theorem enables an explicit expression for the scaled negative log-likelihood of the drift parameter in~\eqref{model}, given by  
\begin{equation}\label{likelihood_Continuous}
    L_T(\theta) = \frac{1}{T} \int_0^T b_\theta^{\star}(X_t) \, dX_t + \frac{1}{2T} \int_0^T \|b_\theta(X_t)\|^2 \, dt.
\end{equation}
However, our work is concerned with the more realistic scenario where the process is observed only at discrete time points. In such cases, the likelihood function cannot be directly employed to estimate the unknown parameter \( \theta_0 \). Various methods have been developed to approximate the likelihood in diffusion models (see, e.g., \cite{Yos92}). We propose the following contrast function:
\begin{equation}\label{contrastFunction}
    R_T(\theta):= \frac{1}{T} \sum_{i=1}^{n} \left\| \Delta X_i + \Delta_n \, b_\theta(X_{t_{i-1}})\right\|_2^2,
\end{equation}
where the norm denotes the standard Euclidean norm in \( \mathbb{R}^d \).
This contrast function is motivated by a first-order approximation of the continuous-time likelihood~\eqref{likelihood_Continuous}. It offers several advantages: it is simple to implement, computationally efficient, and its structure yields a natural and interpretable estimator, making it well-suited for a wide range of practical applications.}

Since our aim is to understand the statistical behaviour of the Lasso estimator, we define the discrete Lasso estimator as the penalised contrast function:
\begin{equation*}
        \w{\theta}_L := \argmin{\theta \in \Theta}\left(R_T(\theta) + \lambda \|\theta\|_1\right),
\end{equation*}
where $\lambda > 0$ is a tuning parameter. Our first goal is, using the properties of the Lasso estimator, to obtain an inequality that provides the basis for deriving an oracle inequality. For this purpose, we define the following bi-linear random form. Given $f,g$ two smooth functions such that $f,g : \R^d \rightarrow \R^d$, we define 
\begin{equation}\label{bilinearForm}
    \langle f,g \rangle_D = \frac{1}{n} \sum_{i=1}^{n} f(X_{t_{i-1}})^{\star} g(X_{t_{i-1}})
\end{equation}
 and set $\| f \|_D:= \sqrt{\langle f,f \rangle_D}$ the associated norm.

Given $n$ observations of $\eqref{model}$, and considering that $\theta_0 \in \Theta$ is the true parameter of the diffusion drift, for any  $\theta_1, \theta_2,  \in \Theta$ we introduce two random functions  
\begin{equation}\label{rf1}
     A(\theta_1,\theta_2) := \frac{1}{n} \sum_{i=1}^{n} \left(b_{\theta_1}(X_{t_{i-1}}) - b_{\theta_2}(X_{t_{i-1}}) \right)^{\star}  \left( \int_{t_{i-1}}^{t_i} \left(\bto(X_{t_{i-1}}) - \bto(X_s) \right)  ds \right),
\end{equation}
\begin{equation}\label{rf2}
     G(\theta_1,\theta_2) := \frac{1}{n} \sum_{i=1}^{n} \left( b_{\theta_1}(X_{t_{i-1}}) - b_{\theta_2}(X_{t_{i-1}}) \right)^{\star} \left(\Delta W_i \right).
\end{equation}      
The random function at \eqref{rf1} will  play an essential role when evaluating the error associated to the discretization of the process. On the other hand,  $\eqref{rf2}$ can be seen as a martingale capturing the empirical behaviour of the increments of the underlying diffusion.

In this work, we prove  an oracle inequality that holds true with high probability. To accomplish this, we will need to ensure some control over the probabilities of the following sets:
\begin{equation*}
    \mathcal{T} := \left\{ \left\|\sup_{\theta' \in \Theta} \frac{1}{n} \sum_{i=1}^{n} \dot{b_{\theta'}}(X_{t_{i-1}})^{\star}\Delta W_i \right\|_{\infty} \leq \frac{\lambda}{4}\right\},
\end{equation*}
\begin{equation}\label{settaup}
    \mathcal{T}' := \left\{ \left\| \sup_{\theta' \in \Theta} \frac{1}{n} \sum_{i=1}^{n} \dot{b_{\theta'}}(X_{t_{i-1}})^{\star} \left(\int_{t_{i-1}}^{t_i} \left( \bto(X_s) - \bto(X_{t_{i-1}})\right)ds\right)\right\|_{\infty} \leq \frac{\lambda}{4}\right\},
\end{equation}
\begin{equation*}
    \mathcal{T}'' := \left\{ \inf_{\substack{\theta \in \R^p  : \| \theta \|_0 =s \\ \upsilon \in \R^p : \theta - \upsilon \in \mathcal{C}(s,3 + 4/\gamma)}}{\frac{\|\bt - b_\upsilon\|_D}{\|\theta-\upsilon\|_2} \geq k}\right\},
\end{equation*}
for a constant $k>0$ that will be specified later,  while the parameter $\gamma>0$ appearing in $\mathcal{T}''$ remains arbitrary.

The first set, $\mathcal{T}$, is related to the stochastic behavior of the process, and we will refer to it as the empirical or martingale set. To provide an intuitive understanding, we can think that, under $\mathcal{T}$, the stochastic behavior of the diffusion process is controlled or "overruled" by the penalty parameter $\lambda$ of the Lasso estimator.

A key difference from most results in the literature (e.g., \cite{CMP20,marklasso, Dex22}) is that the trajectories of the process are not fully observed, introducing additional randomness to the model design of our estimator. Therefore, the set $\mathcal{T}'$ is associated with the discretization error induced by approximating the log-likelihood function in the continuous case, adding a new challenge in studying the properties of the Lasso estimator.

The final set, $\mathcal{T}''$, is related to the model design. As is common in Lasso literature, we can see the set $\mathcal{T}''$ as incorporating a version of the so-called compatibility condition, restricted to the subset of $\mathbb{R}^p$ that satisfies the properties of the error provided by the Lasso estimator. Furthermore, the set $\mathcal{T}''$ establishes a connection between the Euclidean norm $\| \cdot \|_2$ and the empirical norm $\|\cdot \|_D$ based on the observations of the process. This relationship is controlled by the constant $k$, which must satisfy $0 < k < \sqrt{l}$, where $l$ is the constant appearing in \cref{A5}. This connection between norms is crucial for deriving the oracle inequality presented in the following theorem.

\begin{theorem}\label{TheoremOracleInequality}
Assume that $\|\theta\|_0 = s$ and the drift function $\bt$ in \eqref{model} is differentiable with respect to $\theta$. Therefore,  for any $\gamma>0$, on $\mathcal{T} \cap \mathcal{T'} \cap \mathcal{T''}$, it holds that
 \begin{equation}\label{OracleIneq}
        \|\bth - \bto\|_D^2  \leq  (1+\gamma)\|\bt - \bto\|_D^2 + \frac{4\lambda^2 s (2 + \gamma)^2}{k^2 \gamma \dn^2},
    \end{equation}
    where $\lambda, s$ and $k$ are the tuning parameter of the Lasso estimator, the sparsity of $\theta_0$ and the constant on $\mathcal{T}''$, respectively.
\end{theorem}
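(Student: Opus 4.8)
The plan is to turn the penalized minimization into a quadratic-form inequality and then run the standard Lasso oracle argument, with the two bilinear forms $A$ and $G$ playing the roles of the discretization and martingale perturbations. First I would substitute the dynamics \eqref{model}, writing $\Delta X_i = -\int_{t_{i-1}}^{t_i}\bto(X_s)\,ds + \Delta W_i$, to obtain the pointwise identity
\begin{equation*}
\Delta X_i + \dn\, b_\theta(X_{t_{i-1}}) = \dn\bigl(b_\theta(X_{t_{i-1}}) - \bto(X_{t_{i-1}})\bigr) + \int_{t_{i-1}}^{t_i}\bigl(\bto(X_{t_{i-1}}) - \bto(X_s)\bigr)\,ds + \Delta W_i.
\end{equation*}
Expanding the square in \eqref{contrastFunction}, using $T = n\dn$, and collecting every term that does not depend on $\theta$ into a constant $C$, the definitions \eqref{bilinearForm}, \eqref{rf1} and \eqref{rf2} yield the exact decomposition $R_T(\theta) = \dn\|\bt - \bto\|_D^2 + 2A(\theta,\theta_0) + 2G(\theta,\theta_0) + C$. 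This is the algebraic heart of the proof: it isolates $\|\bt-\bto\|_D^2$ as the deterministic quadratic form and exhibits $A$ and $G$ as the only stochastic contributions.

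Next I would invoke the minimizing property $R_T(\w{\theta}_L) + \lambda\|\w{\theta}_L\|_1 \le R_T(\theta) + \lambda\|\theta\|_1$ for the sparse $\theta$, cancel $C$, and exploit that, because $\bt$ is linear in $\theta$, both $A$ and $G$ are linear in their first slot; hence $A(\theta,\theta_0)-A(\w{\theta}_L,\theta_0)=A(\theta,\w{\theta}_L)$ and likewise for $G$. Writing $\delta:=\w{\theta}_L-\theta$ and using $\dot b = \Phi$, Hölder's inequality gives
\begin{equation*}
|A(\theta,\w{\theta}_L)| \le \|\delta\|_1\,\Bigl\| \frac{1}{n}\sum_{i=1}^n \Phi(X_{t_{i-1}})^\star \int_{t_{i-1}}^{t_i}(\bto(X_s)-\bto(X_{t_{i-1}}))\,ds \Bigr\|_\infty,
\end{equation*}
together with the analogous bound for $G$ against $\frac{1}{n}\sum_i\Phi(X_{t_{i-1}})^\star\Delta W_i$. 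On $\mathcal{T}\cap\mathcal{T}'$ each sup-norm is at most $\lambda/4$, so the whole stochastic perturbation is dominated by a fixed multiple of $\lambda\|\delta\|_1$.

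Then comes the usual Lasso bookkeeping. Setting $S:=\mathrm{supp}(\theta)$ with $|S|=s$, splitting $\|\delta\|_1=\|\delta|_S\|_1+\|\delta|_{S^c}\|_1$ and using $\|\w{\theta}_L\|_1\ge\|\theta|_S\|_1-\|\delta|_S\|_1+\|\delta|_{S^c}\|_1$, I would rearrange into a relation of the form $\dn\|\bth-\bto\|_D^2+c_1\lambda\|\delta|_{S^c}\|_1\le \dn\|\bt-\bto\|_D^2+c_2\lambda\|\delta|_S\|_1$ for positive constants $c_1,c_2$. A dichotomy calibrated to $\gamma$—according to whether $\dn\|\bt-\bto\|_D^2$ exceeds a suitable multiple of $\lambda\|\delta|_S\|_1$—then either (i) forces $\delta\in\mathcal{C}(s,3+4/\gamma)$, or (ii) directly gives $\|\bth-\bto\|_D^2\le(1+\gamma)\|\bt-\bto\|_D^2$ with no remainder needed. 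In case (i) the cone membership is exactly what lets me invoke $\mathcal{T}''$ with $\upsilon=\w{\theta}_L$, so that $\|\bt-\bth\|_D\ge k\|\delta\|_2$; combining $\|\delta|_S\|_1\le\sqrt{s}\,\|\delta\|_2$, the triangle inequality $\|\bt-\bth\|_D\le\|\bt-\bto\|_D+\|\bth-\bto\|_D$, and a Young's inequality with parameter $\gamma$ absorbs the cross term and produces the stated remainder $\frac{4\lambda^2 s(2+\gamma)^2}{k^2\gamma\dn^2}$.

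The hard part will be the third step. The delicate point is that the penalty $\lambda\|\theta\|_1$ carries no factor of $\dn$ while the data-fit term does, so the discretization scale must be carried through the Young estimate; obtaining the clean leading factor $(1+\gamma)$ together with the precise constant $(2+\gamma)^2$ in the remainder requires matching the dichotomy threshold, the cone constant $3+4/\gamma$, and the Young parameter to the same $\gamma$. Establishing the cone membership in the presence of a nonzero oracle error $\|\bt-\bto\|_D^2$ is what makes the constant $3+4/\gamma$ (rather than the classical $3$) unavoidable. By contrast, the uniform control of the noise (the $\sup_{\theta'}$ in $\mathcal{T}$ and $\mathcal{T}'$) is immediate here, since $\dot b=\Phi$ does not depend on $\theta'$—precisely the simplification that fails for the OU process and forces the separate Malliavin treatment.
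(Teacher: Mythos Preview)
Your proposal is correct and follows essentially the same route as the paper's proof: decompose $R_T$ via the dynamics to isolate $\dn\|\bt-\bto\|_D^2$ plus the perturbations $A$ and $G$, bound these on $\mathcal T\cap\mathcal T'$ by $\tfrac{\lambda}{4}\|\delta\|_1$ via H\"older, perform the support splitting to reach $\dn\|\bth-\bto\|_D^2+\lambda\|\delta\|_1\le\dn\|\bt-\bto\|_D^2+4\lambda\|\delta|_S\|_1$, and then run the $\gamma$-dichotomy, using in the cone case $\mathcal T''$, Cauchy--Schwarz, the triangle inequality and the Young inequality $2ab\le\epsilon^{-1}a^2+\epsilon b^2$ with $\epsilon=\tfrac{k\gamma\dn}{2(2+\gamma)}$ to produce the stated constants. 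The only cosmetic difference is that the paper first writes $R_T(\w\theta_L)-R_T(\theta)$ and then substitutes $\Delta X_i$, whereas you substitute first; the resulting inequality and the finishing Young step are identical.
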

\noindent
The proof  can be consulted in Section \ref{ProofOracleInequality}. 
\noindent
The oracle inequality \eqref{OracleIneq} for the Lasso estimator serves as a starting point to evaluate the performance of our estimator. In particular, we derive error bounds for the $l_1$ and the $l_2$ norms.  For this purpose, for a fixed value of $\epsilon \in (0,1)$, we define the constants
\begin{align*} 
    &\lambda_{1,1}:= 23 \sqrt{\frac{d R \dn}{n} \left[\ln{(2p)} +\ln{\left( \frac{2}{\epsilon}\right)} \right]},\\
    &\lambda_{1,2}:=7 \sqrt[4]{\frac{d^3 H_{\dn} \dn}{n^3} \left[\ln{(2p)} +\ln{\left(\frac{2}{\epsilon}\right)} \right]^{3}},\\
     &\lambda_1 := \max \left\{ \lambda_{1,1}, \lambda_{1,2}\right\},\\
      &\lambda_2 := 8 e C_b^{1/2}  s d \dn^{\frac{3}{2}}\sqrt{\ln(p) + \ln\left(\frac{1}{\epsilon}\right)}, \\
    &T_1 := \frac{324 d^2 K_{\dn} \left(5 + \frac{4}{\gamma}\right)^4}{(l -k^2)^2}\left[\ln{\left(\frac{2}{\epsilon}\right)} + \ln{\left(21^{2s}\left(p^{2s} \wedge \left(\frac{ep}{2s}\right)^{2s}\right)\right)}\right].
\end{align*} 
Constants $\lambda_1, \lambda_2$ and $T_1$ are directly related to the  sets $\mathcal{T}, \mathcal{T}'$ and $\mathcal{T}''$, respectively.   The quantities  $H_{\dn}$, $R$ , and $K_{\dn}$, whose values are uniformly bounded, are closely connected with the properties of the drift function in $\eqref{model}$ and are defined in \eqref{quantitiesSetT}, \eqref{R} and \eqref{K}, respectively. The value of $C_b$ in $\lambda_2$ is a constant depending on the model as well. It is independent of $p,d,\dn$ and $T$ and its introduction can be found in Proposition \ref{propTapprox}. In Section   $\ref{sectionControllingProbabilities}$, we will provide a detailed explanation of these expressions. 

The following theorem provides a control over the probability of the intersection of $\mathcal{T}, \mathcal{T}'$ and $\mathcal{T}''$ in terms of the defined quantities.
\begin{theorem}\label{TheoremUnionTaus}
    Under \cref{A1}-\cref{A5}, for a fixed vale of $\epsilon \in \left(0,\frac{1}{3}\right)$,  for all $\lambda > \max\{\lambda_1, \lambda_2\}$, $T > T_1$, it holds that 
    \begin{equation*}
        \P_{\theta_0}(\mathcal{T} \cap \mathcal{T}' \cap \mathcal{T}'' )\geq 1-3\epsilon.
    \end{equation*}
\end{theorem}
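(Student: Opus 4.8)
The plan is to bound the probability of the complement through the union bound and then control each piece separately. Writing
\[
\P_{\theta_0}(\mathcal{T} \cap \mathcal{T}' \cap \mathcal{T}'') \;\geq\; 1 - \P_{\theta_0}(\mathcal{T}^c) - \P_{\theta_0}((\mathcal{T}')^c) - \P_{\theta_0}((\mathcal{T}'')^c),
\]
it suffices to show each of the three complementary probabilities is at most $\epsilon$ under the stated conditions. A simplification used throughout is that, since the drift \eqref{linearModel} is linear in $\theta$, the derivative $\dot{b}_{\theta'} = \Phi$ does not depend on $\theta'$; hence the suprema over $\theta' \in \Theta$ in the definitions of $\mathcal{T}$ and $\mathcal{T}'$ are vacuous, and both sets reduce to statements about a single $p$-dimensional random vector.

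For the martingale set $\mathcal{T}$, I would note that its $j$-th coordinate $\frac{1}{n}\sum_{i=1}^{n}\phi_j(X_{t_{i-1}})^{\star}\Delta W_i$ is a discrete martingale transform with respect to $(\mathcal{F}_{t_i})$. Applying the concentration inequality for the linear model from Section \ref{s: CI} to each coordinate and union-bounding over the $p$ coordinates (which produces the $\ln(2p)$ term, the factor $2$ coming from the two-sided tail) yields $\P_{\theta_0}(\mathcal{T}^c) \leq \epsilon$ as soon as $\lambda > \lambda_1$. The two thresholds $\lambda_{1,1}$ and $\lambda_{1,2}$ correspond respectively to the sub-Gaussian regime (scaling like $\sqrt{\dn/n}$ with variance proxy $R$) and the heavier-tailed correction (involving $H_{\dn}$) of the underlying Bernstein-type bound. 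For the discretization set $\mathcal{T}'$, the key observation is that each inner integral $\int_{t_{i-1}}^{t_i}(\bto(X_s) - \bto(X_{t_{i-1}}))\,ds$ is small: since $\bto$ is Lipschitz with constant uniformly bounded in $p, d$ (cf. Remark \ref{remarkBoundedUnif}) and $X_s - X_{t_{i-1}}$ is of order $\sqrt{\dn}$ in $L^2$, each integral is of order $\dn^{3/2}$, matching the factor in $\lambda_2$. I would invoke Proposition \ref{propTapprox} for the precise moment control (introducing $C_b$), combine it with a moment/concentration argument on each coordinate, and union-bound over $p$ to obtain $\P_{\theta_0}((\mathcal{T}')^c) \leq \epsilon$ once $\lambda > \lambda_2$; the $\sqrt{\ln p}$ dependence and the constant $e$ arise from optimizing a moment bound of order $\sim \ln p$.

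The most delicate part is the compatibility set $\mathcal{T}''$. Using linearity, $\|\bt - b_\upsilon\|_D^2 = (\theta-\upsilon)^{\star}\widehat{I}_n(\theta-\upsilon)$ with $\widehat{I}_n := \frac{1}{n}\sum_{i=1}^n \Phi(X_{t_{i-1}})^{\star}\Phi(X_{t_{i-1}})$ the empirical Fisher information, while Assumption \ref{A5} supplies $(\theta-\upsilon)^{\star}I(\theta-\upsilon) > l\|\theta-\upsilon\|_2^2$ on the cone $\mathcal{C}(s, 3 + 4/\gamma)$. The goal is therefore to show that $\widehat{I}_n$ is uniformly close to $I$ in the quadratic form restricted to the cone, so that $(\theta-\upsilon)^{\star}\widehat{I}_n(\theta-\upsilon) \geq k^2\|\theta-\upsilon\|_2^2$ with $0 < k < \sqrt{l}$. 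Since the infimum ranges over an infinite set, I would reduce it to a finite problem: the cone condition forces the relevant directions to be effectively $2s$-sparse, so I would union-bound over the $\binom{p}{2s}$ possible supports — giving the combinatorial factor $p^{2s}\wedge(ep/2s)^{2s}$ via $\binom{p}{2s} \leq (ep/2s)^{2s}$ — and over a finite net of the unit sphere in each $2s$-dimensional support, which produces the $21^{2s}$ factor (a net of resolution $1/10$). On each net point the deviation of the quadratic form is controlled by the concentration inequality of Section \ref{s: CI}, and the condition $T > T_1$ is precisely what forces this deviation below $l - k^2$, so that $\widehat{I}_n$ inherits the lower bound $k^2$ from $I$; the factor $(l-k^2)^{-2}$ and the cone constant $(5 + 4/\gamma)^4$ in $T_1$ reflect exactly this calibration.

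The main obstacle I anticipate is the covering argument for $\mathcal{T}''$. One must simultaneously handle the infinite-dimensional infimum over the cone, reduce it to sparse supports while controlling the error incurred by discretizing the sphere (and propagating the cone inequality from net points to arbitrary cone elements), and apply a concentration inequality to the second-order quadratic forms $(\theta-\upsilon)^{\star}(\widehat{I}_n - I)(\theta-\upsilon)$, which are genuinely harder than the martingale sums entering $\mathcal{T}$ and $\mathcal{T}'$. Balancing the net resolution against the combinatorial count of supports so that the union bound closes with the stated $T_1$ is the crux of the argument.
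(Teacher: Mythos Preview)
Your proposal is correct and follows the same architecture as the paper: the paper's proof of this theorem is literally a one-line union bound invoking Theorems \ref{TheoremT}, \ref{TheoremTP} and \ref{TheoremTPP}, and your sketch accurately reproduces the content of those three auxiliary results (the two-regime martingale bound for $\mathcal{T}$ via Proposition \ref{propTmtg}, the $\dn^{3/2}$ moment control for $\mathcal{T}'$ via Proposition \ref{propTapprox}, and the sparse-cone covering argument for $\mathcal{T}''$ via Proposition \ref{propTPP} and the lemmas from \cite{Bas15}). Your identification of the $\mathcal{T}''$ covering step as the delicate part is also in line with the paper, which outsources the combinatorial reduction to the cited lemmas.
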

\begin{proof}
    The result comes directly from combining Theorem \ref{TheoremT}, Theorem \ref{TheoremTP} and  Theorem \ref{TheoremTPP} in Section \ref{sectionControllingProbabilities}.
\end{proof}

\noindent
An immediate  consequence of Theorem \ref{TheoremUnionTaus} is the following result.

\begin{corollary}\label{CoroErrorBounds}
    Assume conditions \cref{A1}-\cref{A5}. Recall that  $\|\theta_0\|_0 = s$. For a fixed value of $\epsilon \in (0,1/3)$, if $\lambda > \max\{\lambda_1,\lambda_2\}$, $0 < k <\sqrt{l}$ and  $T > T_1$, with probability at least $1-3\epsilon$, we have that 
    \begin{equation*}
         \|\widehat{\theta}_L -\theta_0 \|_2^2 \leq \frac{4\lambda^2 s (2 + \gamma)^2}{k^4 \gamma \dn^2}
    \end{equation*}
    and
    \begin{equation*}
     \|\widehat{\theta}_L -\theta_0 \|_1 \leq  \frac{8\lambda s (1+\gamma)(2+\gamma)}{k^2  \gamma^{\frac{3}{2}}\dn}.
    \end{equation*}
\end{corollary}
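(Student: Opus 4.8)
The plan is to combine the oracle inequality of Theorem \ref{TheoremOracleInequality} with the probability control of Theorem \ref{TheoremUnionTaus}, and then to convert the resulting bound on the empirical $D$-norm into bounds on the Euclidean and $l_1$ norms of the parameter error. First, Theorem \ref{TheoremUnionTaus} guarantees that, under the stated conditions on $\lambda$, $T$ and $\epsilon$, the event $\mathcal{T}\cap\mathcal{T}'\cap\mathcal{T}''$ occurs with probability at least $1-3\epsilon$; all subsequent inequalities will then be established deterministically on this event. On it I would apply the oracle inequality \eqref{OracleIneq} with the oracle choice $\theta=\theta_0$, which is admissible because the sparsity constraint \eqref{sparsityConstraint} gives $\|\theta_0\|_0=s$. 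With this choice the leading term $(1+\gamma)\|\bt-\bto\|_D^2$ vanishes, leaving
\[
\|\bth-\bto\|_D^2 \leq \frac{4\lambda^2 s(2+\gamma)^2}{k^2\gamma\dn^2}.
\]

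The next step is to pass from the empirical norm to the Euclidean norm using the set $\mathcal{T}''$. By its very definition, on $\mathcal{T}''$ one has $\|\bt-b_\upsilon\|_D \geq k\,\|\theta-\upsilon\|_2$ for every pair $(\theta,\upsilon)$ with $\|\theta\|_0=s$ and $\theta-\upsilon\in\mathcal{C}(s,3+4/\gamma)$. Applying this with $\theta=\theta_0$ and $\upsilon=\w\theta_L$ yields $\|\bth-\bto\|_D \geq k\,\|\w\theta_L-\theta_0\|_2$, hence $k^2\|\w\theta_L-\theta_0\|_2^2 \leq \|\bth-\bto\|_D^2$. Combining this with the displayed oracle bound produces exactly the claimed $l_2$ estimate, the second factor of $k^2$ (so that the denominator reads $k^4$) being precisely the cost of this norm-conversion step. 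For the $l_1$ bound I would exploit the cone membership directly: since $\w\theta_L-\theta_0\in\mathcal{C}(s,3+4/\gamma)$, the defining inequality \eqref{cone} with $c=3+4/\gamma$ gives $\|\w\theta_L-\theta_0\|_1 \leq \tfrac{4(1+\gamma)}{\gamma}\,\|(\w\theta_L-\theta_0)|_{\mathcal{I}_s}\|_1$, and a Cauchy--Schwarz step over the at most $s$ retained coordinates bounds the right-hand side by $\tfrac{4(1+\gamma)}{\gamma}\sqrt{s}\,\|\w\theta_L-\theta_0\|_2$. Substituting the $l_2$ bound just obtained yields the stated $l_1$ estimate after elementary simplification.

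The one point requiring care — and which I regard as the main obstacle — is the membership $\w\theta_L-\theta_0\in\mathcal{C}(s,3+4/\gamma)$, since this is exactly what licenses both the application of $\mathcal{T}''$ and the final $l_1$ step. This cone localisation of the Lasso error is the standard consequence of the basic inequality $R_T(\w\theta_L)+\lambda\|\w\theta_L\|_1 \leq R_T(\theta_0)+\lambda\|\theta_0\|_1$, which follows from $\w\theta_L$ being a minimiser, together with the control of the cross terms on $\mathcal{T}\cap\mathcal{T}'$. It is established inside the proof of Theorem \ref{TheoremOracleInequality} (the same derivation that produces \eqref{OracleIneq}), so I would invoke it here rather than re-derive it. Given that ingredient, the corollary follows by the elementary algebra above, with no probabilistic input beyond Theorem \ref{TheoremUnionTaus}.
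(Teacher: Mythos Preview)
Your proposal is correct and follows essentially the same approach as the paper's proof: apply Theorem \ref{TheoremUnionTaus} to land on $\mathcal{T}\cap\mathcal{T}'\cap\mathcal{T}''$, specialise the oracle inequality \eqref{OracleIneq} to $\theta=\theta_0$, use the compatibility condition from $\mathcal{T}''$ for the $l_2$ bound, and then the cone property plus Cauchy--Schwarz for the $l_1$ bound. The only cosmetic difference is that for the $l_1$ step the paper invokes the intermediate inequality $\|\w\theta_L-\theta_0\|_1 \leq (4+4/\gamma)\|\w\theta_L|_{\mathcal{S}(\theta_0)}-\theta_0\|_1$ extracted from the proof of Theorem \ref{TheoremOracleInequality}, whereas you appeal directly to the definition \eqref{cone} of the cone; both routes yield the same constant.
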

\begin{proof}
First, we observe that by Theorem \ref{TheoremUnionTaus}, we are on $\mathcal{T} \cap \mathcal{T}' \cap \mathcal{T}''$ with probability at least $1-3\epsilon$. Therefore, if we apply the oracle inequality $\eqref{OracleIneq}$ to $\theta = \theta_0$, we can easily conclude that 
\begin{equation*}
     \|\bth - \bto\|_D^2  \leq   \frac{4\lambda^2 s (2 + \gamma)^2}{k^2 \gamma \dn^2}.
\end{equation*}
 Under $\mathcal{T}''$ we have that $\|\bth - \bto\|_D  \geq k \|\widehat{\theta}_L -\theta_0 \|_2$, and then 
\begin{equation}\label{boundl2}
    \|\widehat{\theta}_L -\theta_0 \|_2^2 \leq \frac{4\lambda^2 s (2 + \gamma)^2}{k^4 \gamma \dn^2}.
\end{equation}
For the $l_1$ bound, as $\widehat{\theta}_L -\theta_0 \in \mathcal{C}(s,3 + 4/\gamma)$ (defined in $\eqref{cone}$), and applying Cauchy--Schwarz inequality, 
\begin{equation*}
    \|\widehat{\theta}_L -\theta_0 \|_1 \leq \left(4 + \frac{4}{\gamma}\right)\|\w\theta_L|_{\mathcal{S(\theta)}} - \theta_0\|_1 \leq \frac{4(\gamma +1)}{\gamma}\sqrt{s}\|\widehat{\theta}_L -\theta_0 \|_2 \leq \frac{8\lambda s (1+\gamma)(2+\gamma)}{k^2 \gamma^{\frac{3}{2}} \dn}.
\end{equation*}
\end{proof}

\noindent
One of our main findings is the rate of convergence of the estimator in terms of the model parameters. From Corollary \ref{CoroErrorBounds}, we can specify precise conditions under which the error term associated with the discretization of the process dominates the convergence rate of the Lasso estimator. This highlights scenarios where the lack of continuous observation of the process leads to a significant difference compared to cases where the entire path is observed.

\begin{corollary}\label{consistencyResult} Assume conditions \cref{A1}-\cref{A5}. Let us consider that  $\|\theta_0\|_0 = s$. For a fixed value of $\epsilon \in (0,1/3)$, if $\lambda > \max\{\lambda_1,\lambda_2\}$, $0 < k <\sqrt{l}$ and  $T > T_1$, with probability at least $1-3\epsilon$ we have that 
\begin{itemize}
    \item[(i)] If $(d\ln(p))/(n\dn ) \rightarrow 0$ and $1/( s^2 d n \dn^2 ) \rightarrow 0$, then the error term dominates the convergence of the estimator and 
   \begin{equation}\label{rateconvl2}
        \|\widehat{\theta}_L -\theta_0 \|_2 = O_{\P}\left(\frac{d}{k^2} \sqrt{s^3 \dn\ln(p)}\right).
   \end{equation}
   \item[(ii)] If $(d\ln(p))/(n\dn ) \rightarrow 0$ and $1/( s^2 d n \dn^2 ) \rightarrow \infty$, the error term is negligible and 
   \begin{equation}\label{rateconvl22}
        \|\widehat{\theta}_L -\theta_0 \|_2 = O_{\P}\left(\frac{1}{k^2}\sqrt{\frac{ d \ln(p) s}{n \dn}}\right).
   \end{equation}
   \end{itemize}
\end{corollary}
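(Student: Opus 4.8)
The plan is to feed the smallest admissible penalty level into the $l_2$ error bound of Corollary \ref{CoroErrorBounds} and then carry out an asymptotic comparison of the three quantities $\lambda_{1,1}$, $\lambda_{1,2}$ and $\lambda_2$ that determine $\max\{\lambda_1,\lambda_2\}$. Since Corollary \ref{CoroErrorBounds} is valid for every $\lambda>\max\{\lambda_1,\lambda_2\}$, on the event of probability at least $1-3\epsilon$ we may take $\lambda$ of the order of $\max\{\lambda_1,\lambda_2\}$ and write
\begin{equation*}
    \|\w{\theta}_L-\theta_0\|_2 \;\leq\; \frac{2(2+\gamma)}{k^2\sqrt{\gamma}}\,\frac{\sqrt{s}\,\max\{\lambda_1,\lambda_2\}}{\dn}.
\end{equation*}
Because $\gamma$, $k$ and $\epsilon$ are fixed and the quantities $R$, $H_{\dn}$, $K_{\dn}$ and $C_b$ are uniformly bounded, the whole problem reduces to identifying which of the three penalty levels dominates in each regime; the factors depending on $\gamma,k,\epsilon$ and on the bounded constants are then absorbed into the $O_{\P}$ symbol.

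First I would show that the assumption $\ln(p)/(n\dn)\to0$ makes $\lambda_{1,2}$ negligible relative to $\lambda_{1,1}$. Indeed, a direct computation of orders gives
\begin{equation*}
    \frac{\lambda_{1,2}}{\lambda_{1,1}} \;\asymp\; \left(\frac{\ln p}{n\dn}\right)^{1/4}\longrightarrow 0,
\end{equation*}
so that $\lambda_1=\max\{\lambda_{1,1},\lambda_{1,2}\}\asymp\lambda_{1,1}$ for $n$ large. It then remains to compare $\lambda_{1,1}$, associated with the martingale set $\mathcal{T}$, with $\lambda_2$, associated with the discretization set $\mathcal{T}'$. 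Keeping only the dependence on $s,d,n,\dn,p$, one finds
\begin{equation*}
    \frac{\lambda_2}{\lambda_{1,1}} \;\asymp\; s\sqrt{d\,n\,\dn^2} \;=\;\sqrt{s^2 d\,n\,\dn^2},
\end{equation*}
which is precisely the pivotal quantity appearing in the dichotomy of the statement.

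In case (i), the hypothesis $1/(s^2 d n\dn^2)\to0$ means $s^2 d n\dn^2\to\infty$, so $\lambda_2$ dominates and $\max\{\lambda_1,\lambda_2\}\asymp\lambda_2\asymp s d\,\dn^{3/2}\sqrt{\ln p}$. Substituting into the displayed $l_2$ bound yields
\begin{equation*}
    \|\w{\theta}_L-\theta_0\|_2 \;\lesssim\; \frac{1}{k^2}\,\frac{\sqrt{s}\cdot s d\,\dn^{3/2}\sqrt{\ln p}}{\dn} \;=\;\frac{d}{k^2}\sqrt{s^3\,\dn\,\ln p},
\end{equation*}
which is \eqref{rateconvl2}. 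In case (ii), $1/(s^2 d n\dn^2)\to\infty$ forces $s^2 d n\dn^2\to0$, hence $\lambda_{1,1}$ dominates and $\max\{\lambda_1,\lambda_2\}\asymp\lambda_{1,1}\asymp\sqrt{d\,\dn\,\ln p/n}$; substituting gives
\begin{equation*}
    \|\w{\theta}_L-\theta_0\|_2 \;\lesssim\; \frac{1}{k^2}\,\frac{\sqrt{s}\,\sqrt{d\,\dn\,\ln p/n}}{\dn} \;=\;\frac{1}{k^2}\sqrt{\frac{d\,s\,\ln p}{n\,\dn}},
\end{equation*}
which is \eqref{rateconvl22}. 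Finally, since the bound holds with probability at least $1-3\epsilon$ for every fixed $\epsilon\in(0,1/3)$, the stochastic boundedness statements $O_{\P}$ follow by taking $\epsilon$ small and the hidden constant large.

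The steps above are essentially bookkeeping of exponents, so I do not expect a deep obstacle; the one point requiring care is the systematic tracking of the joint dependence on $(s,d,n,\dn,p)$ through the definitions of $\lambda_{1,1},\lambda_{1,2},\lambda_2$, and in particular the verification that $\lambda_{1,2}$ never contributes under $\ln(p)/(n\dn)\to0$, so that the entire dichotomy is governed by the single quantity $s^2 d\,n\,\dn^2$.
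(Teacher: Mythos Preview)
Your proposal is correct and follows the same approach as the paper: you invoke the $l_2$ bound from Corollary \ref{CoroErrorBounds}, determine which of $\lambda_{1,1},\lambda_{1,2},\lambda_2$ dominates in each regime, and substitute back. In fact, you provide more detail than the paper itself, which simply asserts that $\max\{\lambda_1,\lambda_2\}=\lambda_2$ in case (i) and $\max\{\lambda_1,\lambda_2\}=\lambda_{1,1}$ in case (ii); your explicit verification that $\lambda_{1,2}/\lambda_{1,1}\asymp(\ln p/(n\dn))^{1/4}\to0$ and $\lambda_2/\lambda_{1,1}\asymp\sqrt{s^2 d n\dn^2}$ fills in exactly the bookkeeping the paper leaves implicit.
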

\begin{proof}
    Under the mentioned assumptions,  if $(d\ln(p))/( n\dn) \rightarrow 0$ and $1/( s^2 d n \dn^2) \rightarrow 0$, we have that $\max\{\lambda_1,\lambda_2\} = \lambda_2.$ Therefore, from $\eqref{boundl2}$, we obtain $\eqref{rateconvl2}$.
    On the other hand, when $(d\ln(p))/ (n \dn) \rightarrow 0$ and $ 1/ (s^2 d n \dn^2) \rightarrow \infty$, the maximum between $\lambda_1$ and $\lambda_2$ is given by $\lambda_{1,1}$. Therefore,  the term associated with the empirical part of the process is the one who dominates and, from $\eqref{boundl2}$, we obtain $\eqref{rateconvl22}$. 
\end{proof}
\begin{remark}{\label{rk: condition step}} 
\normalfont
As mentioned in the introduction, Corollary \ref{consistencyResult} indicates that the threshold determining whether the discretization error or the martingale part is dominant is given by the asymptotic behavior of $ 1/ s^2 d n \Delta_n^2$. When $d, s$ and $p$ are constants, this simplifies to $n \Delta_n^2 \rightarrow 0$. This assumption aligns with the condition found in the literature for parameter estimation in classical SDEs, as initially introduced by \cite{FloZmi}. This condition was later improved to $n \Delta_n^3 \rightarrow 0$ by \cite{Yos92}, thanks to a correction in the contrast function. Finally, Kessler \cite{Kes} proposed a contrast function based on a Gaussian approximation of the transition density, which allowed for a weaker condition $n \Delta_n^\gamma \rightarrow 0$ for any arbitrary integer $\gamma$. Similar developments have been made for classical SDEs with jumps in \cite{  Amo20, Amo21, GLM, Shi}.

One might wonder if, in our context, it would be possible to relax the condition on the discretization step similarly.
To begin with, it is important to highlight that the analysis of discretization error, as presented in Proposition \ref{propTapprox}, can be significantly simplified when the drift is bounded. In such cases, applying the Itô formula leads us to the condition $1 / (s^2 d n \Delta_n^3) \rightarrow \infty$ to ensure that the martingale part is the main contribution, and the error due to discretization becomes negligible. Even with an unbounded drift, we believe that utilizing higher-order approximations and iterating the Itô formula (similar to the methods proposed in \cite{Amo20} and \cite{Kes}) could yield a less stringent condition. However, this would introduce additional stochastic terms into our analysis, which would need to be controlled using concentration inequalities (see Section \ref{s: CI}), with explicit dependence on the variables of interest. This adjustment would significantly alter our approach, resulting in more complex computations and interpretations. Consequently, we have left this for future exploration. 
\end{remark}
\begin{remark} 
\normalfont
 If $p \leq  d$, meaning that the dimension of the stochastic process is greater than or equal to the dimension of the parameter, it is reasonable to add a classical assumption about the minimum eigenvalue $l_{\min}(\theta)$ of the empirical Fisher information matrix \eqref{fisherInfo}. Specifically, if we assume $l_{\min}(\theta) > 0$, then by choosing $k = \sqrt{l_{\min}(\theta)}/2$ and remarking that the magnitude of $l_{\min}(\theta)$ is usually of order $d$, we found that \eqref{rateconvl22} could be expressed as 
    \begin{equation*}
        \|\widehat{\theta}_L -\theta_0 \|_2 = O_{\P}\left(\sqrt{\frac{ \ln(p) s}{ d n \dn}}\right).
    \end{equation*}
Therefore, in the case where the discretization error is negligible, we achieve the same optimal rate of convergence for the estimator as when the entire path of the process is observed (see \cite{marklasso}). 
\end{remark}

\subsection{{Main results for the OU case}}
As in the general linear model, we focus on the high-dimensional regime where \( d \), \( n \), and \( T \) are large. In the Ornstein–Uhlenbeck (OU) setting, the parameter of interest is the interaction matrix \( A_0 \in \mathbb{A} \subseteq \mathbb{M}_{d \times d} \), which plays the same role as the vector \( \theta_0 \in \mathbb{R}^p \) in the linear model. the subspace $\mathbb{A}$ is also considered to be convex and compact as $\Theta$.

For the sake of clarity and conciseness throughout the rest of the paper, we emphasize the parallels between the two settings by occasionally identifying the matrix \( A_0 \) with the parameter vector \( \theta_0 \). Under this identification, the drift function becomes \( b_\theta(x) = Ax \), and the dimension of the parameter space becomes \( p = d^2 \). Consequently, when referring to results analogous to those established in the general linear setting, we implicitly interpret $\theta$ as corresponding to the matrix $A$ in the OU context.

For the estimation of the matrix \( A_0 \), we again consider a contrast function of the form given in \eqref{contrastFunction}. The Lasso estimator for the interaction matrix is then defined as:
\[
\widehat{A}_L := \arg \min_{A \in \mathbb{A}} \left\{ R_T(A) + \lambda \|A\|_1 \right\},
\]
where \( \lambda > 0 \) is the tuning parameter of the estimator.

Given two matrices $A,B \in \M{d}{d}$, the bi-linear random form defined in \eqref{bilinearForm} will read as 
\begin{equation}\label{bilinearFormOU}
    \langle A,B \rangle_D = \frac{1}{n}\sum_{i=1}^n (AX_{t_{i-1}})^\star BX_{t_{i-1}},
\end{equation}
while the random functions \eqref{rf1} and \eqref{rf2} remain analogous, as do the three sets $\mathcal{T}$, $\mathcal{T}'$ and $\mathcal{T}''$.

Our first relevant result is the oracle inequality, stated in Theorem  \ref{TheoremOracleInequality}. Since we consider that $\|A_0\|_0 = s$ and the drift function in the OU case is differentiable with respect to $A$, the inequality remains true for this particular model too.
Similarly, for a fixed value of $\epsilon \in (0,1)$, we define the following constants
\begin{align*}
     &\lambda_1^{OU} := \sqrt{\frac{32\dn(\mathfrak{a} + \mathfrak{l}_{\min}-k^2)\left[\ln(d^2) + \ln(2/\epsilon)\right]}{n}},\\
     &\lambda_2^{OU} := 8 e (C_b^{OU})^{1/2} d^2 \dn^{3/2}\sqrt{\ln(d^2) + \ln\left(\frac{1}{\epsilon}\right)},\\
    &T_1^{OU}(\epsilon,\alpha,\beta) := \frac{8\mathfrak{p}_0\mathfrak{l}_{\max}\beta\left((\mathfrak{l}_{\min} - k^2) + \beta\mathfrak{l}_{\max}\right)}{\mathfrak{m}(\mathfrak{l}_{\min}-k^2)^2}\left[\ln\left(\alpha \right)+ \ln\left(\frac{1}{\epsilon}\right)\right],
\end{align*}
where the intuition behind these constants mirrors that of their counterparts in the general linear model. All quantities appearing in these expressions are defined in Section~\ref{sss: assumptionsOU} and in the proofs of Theorems~\ref{TheoremTPP_OU}–\ref{TheoremTP_OU}. In particular, we set $\alpha = 21^{2s}2d(d^{4s} \wedge(ed^2)/(2s))^{2s}$ and $\beta = 9(5 + 4/\gamma)^2$.

Next result provides a control over the probability of the intersection of $\mathcal{T}$, $\mathcal{T}'$ and $\mathcal{T}''$ in terms of the defined quantities for the OU case. 
\begin{theorem}\label{TheoremUnionTausOU}
    Under \cref{AOU} - \cref{LminLmaxCinf}, for a fixed vale of $\epsilon \in \left(0,1/3\right)$,  for all $\lambda > \max\{\lambda_1^{OU}, \lambda_2^{OU}\}$, $T > T_1^{OU}(3\epsilon/4,\alpha,\beta)$ and $0 < k \leq \sqrt{\mathfrak{l}_{\min}}$, it holds that 
    \begin{equation*}
        \P_{A_0}(\mathcal{T} \cap \mathcal{T}' \cap \mathcal{T}'' )\geq 1-3\epsilon.
    \end{equation*}
\end{theorem}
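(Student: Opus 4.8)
\textbf{Proof proposal for Theorem~\ref{TheoremUnionTausOU}.}

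The plan is to mirror the strategy already used for the general linear model in Theorem~\ref{TheoremUnionTaus}, namely to bound each of the three sets $\mathcal{T}$, $\mathcal{T}'$ and $\mathcal{T}''$ separately and then combine them via a union bound. Since the statement of Theorem~\ref{TheoremUnionTaus} was obtained by invoking Theorems~\ref{TheoremT}, \ref{TheoremTP} and \ref{TheoremTPP}, I expect the OU analogue to follow directly from the corresponding OU-specific results, referenced in the excerpt as Theorems~\ref{TheoremTPP_OU}–\ref{TheoremTP_OU}. Concretely, I would show
\begin{equation*}
    \P_{A_0}(\mathcal{T}^c) \leq \epsilon, \qquad \P_{A_0}((\mathcal{T}')^c) \leq \epsilon, \qquad \P_{A_0}((\mathcal{T}'')^c) \leq \epsilon,
\end{equation*}
under the respective threshold conditions $\lambda > \lambda_1^{OU}$, $\lambda > \lambda_2^{OU}$, and $T > T_1^{OU}(3\epsilon/4,\alpha,\beta)$ with $0 < k \leq \sqrt{\mathfrak{l}_{\min}}$, and then apply $\P(\mathcal{T} \cap \mathcal{T}' \cap \mathcal{T}'') \geq 1 - \P(\mathcal{T}^c) - \P((\mathcal{T}')^c) - \P((\mathcal{T}'')^c) \geq 1 - 3\epsilon$ via the union bound (Bonferroni inequality).

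First I would treat the martingale set $\mathcal{T}$. Here the drift is $b_A(x) = Ax$, so $\dot{b}_A(x)$ is linear and the supremum over $A \in \mathbb{A}$ is controlled using compactness of $\mathbb{A}$; the relevant sum $\frac{1}{n}\sum_i \dot{b}_A(X_{t_{i-1}})^\star \Delta W_i$ is a martingale transform and its $l_\infty$-norm must be bounded below $\lambda/4$ with high probability. This is where the OU-specific concentration inequality derived via Malliavin calculus (announced in Section~\ref{s: CI}) enters, and it dictates the precise form of $\lambda_1^{OU}$, in particular the factor $(\mathfrak{a} + \mathfrak{l}_{\min} - k^2)$ and the dependence on $\ln(d^2)$ coming from the $p = d^2$ identification. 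Next, for the discretization set $\mathcal{T}'$, I would bound the integral increments $\int_{t_{i-1}}^{t_i}(b_{A_0}(X_s) - b_{A_0}(X_{t_{i-1}}))\,ds$ using the explicit OU dynamics and moment bounds on the stationary law $\mathcal{N}(0, C_\infty)$; this produces the $\Delta_n^{3/2}$ scaling visible in $\lambda_2^{OU}$ and relies on the constant $C_b^{OU}$ furnished by the OU analogue of Proposition~\ref{propTapprox}.

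The main obstacle, as it was in the linear case, is the compatibility set $\mathcal{T}''$, which requires a lower bound on $\|b_A - b_{A_0}\|_D / \|A - A_0\|_2$ uniformly over the cone $\mathcal{C}(s, 3 + 4/\gamma)$. The difficulty is twofold: first, one must relate the empirical norm $\|\cdot\|_D$ built from $C_T = \frac{1}{n}\sum_i X_{t_{i-1}} X_{t_{i-1}}^\star$ to the population covariance $C_\infty$, which demands controlling the deviation of $C_T$ from its mean; second, since $A - A_0$ ranges over a cone in $\mathbb{M}_{d\times d} \cong \R^{d^2}$, one needs a uniform-over-the-cone concentration, typically handled by a covering/$\varepsilon$-net argument over the sparse cone followed by a union bound — this is the source of the combinatorial factor $\alpha = 21^{2s}\,2d\,(d^{4s} \wedge (ed^2/(2s))^{2s})^{2s}$ and the geometric factor $\beta = 9(5 + 4/\gamma)^2$ appearing in $T_1^{OU}$. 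The ergodic mixing of the OU process, quantified through $\mathfrak{m}$, $\mathfrak{p}_0$ and the eigenvalue bounds $\mathfrak{l}_{\min}, \mathfrak{l}_{\max}$, is what ultimately yields the stated form of the threshold $T_1^{OU}$; verifying that the constant $k \leq \sqrt{\mathfrak{l}_{\min}}$ leaves a strictly positive gap $\mathfrak{l}_{\min} - k^2 > 0$ in the denominators is the technical crux that makes the bound on $\mathcal{T}''$ nonvacuous. Once these three pieces are assembled with the calibration $\epsilon \mapsto 3\epsilon/4$ in the time horizon (chosen so the individual probabilities sum correctly), the union bound closes the argument.
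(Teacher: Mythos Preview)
Your proposal is correct and matches the paper's approach exactly: the paper's proof is a single sentence stating that the result follows directly from combining Theorems~\ref{TheoremTPP_OU}, \ref{TheoremT_OU} and \ref{TheoremTP_OU} via a union bound, which is precisely what you outline. Your additional discussion of how each of those three supporting theorems is established (Malliavin-based concentration for $\mathcal{T}$, moment bounds for $\mathcal{T}'$, covering argument over the cone for $\mathcal{T}''$) is accurate and useful context, but belongs to the proofs of those auxiliary results rather than to this combining theorem itself.
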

\begin{proof}
    The result comes directly from combining Theorem \ref{TheoremTPP_OU}, Theorem \ref{TheoremT_OU} and  Theorem \ref{TheoremTP_OU}.
\end{proof}
Again, the following error bounds are an immediate consequence of Theorem \ref{TheoremUnionTausOU}. 

\begin{corollary}\label{CoroErrorBoundsOU}
    Assume conditions \cref{AOU}, \cref{LminLmaxCinf} and $\|A_0\|_0=s.$  For a fixed value of $\epsilon \in (0,1/3)$, if $\lambda > \max\{\lambda_1^{OU},\lambda_2^{OU}\}$, $T > T_1^{OU}(3\epsilon/4,\alpha,\beta)$ and  $0 < k <\sqrt{\mathfrak{l}_{\min}}$,   with probability at least $1-3\epsilon$, we have that 
    \begin{equation}\label{l2bound_OU}
         \|\widehat{A}_L -A_0 \|_2^2 \leq \frac{4\lambda^2 s (2 + \gamma)^2}{k^4 \gamma \dn^2}
    \end{equation}
    and
    \begin{equation*}
     \|\widehat{A}_L -A_0 \|_1 \leq  \frac{8\lambda s (1+\gamma)(2+\gamma)}{k^2  \gamma^{\frac{3}{2}}\dn}.
    \end{equation*}
\end{corollary}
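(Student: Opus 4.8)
The plan is to follow verbatim the argument used for Corollary~\ref{CoroErrorBounds}, transporting each ingredient to the Ornstein–Uhlenbeck setting under the identification $\theta_0 \leftrightarrow A_0$, $\w\theta_L \leftrightarrow \w A_L$, with $b_A(x) = Ax$ and $p = d^2$. First I would invoke Theorem~\ref{TheoremUnionTausOU}: under the stated hypotheses on $\lambda$, $T$ and $k$, the event $\mathcal{T}\cap\mathcal{T}'\cap\mathcal{T}''$ has probability at least $1-3\epsilon$, so for the rest of the argument I may work deterministically on this intersection.

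Next I would apply the oracle inequality of Theorem~\ref{TheoremOracleInequality}, which (as noted in the text) remains valid in the OU case since $\|A_0\|_0 = s$ and $b_A(x)=Ax$ is differentiable in $A$. Choosing the comparison point to be the true parameter $A_0$ makes the first term on the right-hand side of \eqref{OracleIneq} vanish, because $\|b_{A_0}-b_{A_0}\|_D = 0$. This yields
\begin{equation*}
    \|b_{\w A_L} - b_{A_0}\|_D^2 \leq \frac{4\lambda^2 s (2+\gamma)^2}{k^2 \gamma \dn^2}.
\end{equation*}

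To obtain \eqref{l2bound_OU} I would then use the compatibility-type set $\mathcal{T}''$, which in the OU formulation (built from the bilinear form \eqref{bilinearFormOU}) gives $\|b_{\w A_L}-b_{A_0}\|_D \geq k\,\|\w A_L - A_0\|_2$. Substituting and dividing by $k^2$ produces the claimed $l_2$ bound. For the $l_1$ bound I would exploit the geometric property of the Lasso error: the minimiser satisfies $\w A_L - A_0 \in \mathcal{C}(s, 3+4/\gamma)$, so by the definition \eqref{cone} of the cone together with Cauchy–Schwarz one bounds $\|\w A_L - A_0\|_1$ by $\frac{4(\gamma+1)}{\gamma}\sqrt{s}\,\|\w A_L - A_0\|_2$; inserting the $l_2$ estimate then gives the stated bound.

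Since every step is a transcription of a result already established (the probability control in Theorem~\ref{TheoremUnionTausOU}, the oracle inequality, and the cone membership of the Lasso error), there is no genuine obstacle at the level of this corollary. The only point requiring care is purely bookkeeping: verifying that the $D$-norm and the compatibility constant $k$ are defined through the matrix bilinear form \eqref{bilinearFormOU}, so that the inequality $\|b_{\w A_L}-b_{A_0}\|_D \geq k\|\w A_L - A_0\|_2$ transfers correctly, with $\|\cdot\|_2$ understood as the entrywise Euclidean norm on matrices. All the real difficulty has been absorbed into Theorem~\ref{TheoremUnionTausOU} and the OU-specific concentration inequality (derived via Malliavin calculus) that underlies it.
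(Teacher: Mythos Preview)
Your proposal is correct and matches the paper's approach exactly: the paper simply states that the proof is omitted since it is analogous to that of Corollary~\ref{CoroErrorBounds}, and the steps you outline (invoke Theorem~\ref{TheoremUnionTausOU}, apply the oracle inequality at $A_0$, use $\mathcal{T}''$ for the $l_2$ bound, and the cone membership plus Cauchy--Schwarz for the $l_1$ bound) are precisely those of that earlier proof transported to the OU setting.
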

\begin{proof}
   We omit the proof for brevity, as the procedure is analogous to that of Corollary \ref{CoroErrorBounds}.
\end{proof}
From Corollary \ref{CoroErrorBoundsOU}, we can specify the rate of convergence of the estimator, highlighting the cases where the discretization of the process dominates. In particular, we state in the next result two distinct scenarios. In one of these, as in the general linear drift case, the lack of continuous observations significantly impacts the rate of the estimator in contrast to the situation where the full path is available.
\begin{corollary}\label{consistencyResult_OU} Assume conditions \cref{AOU} and \cref{LminLmaxCinf}. Let us consider that  $\|A_0\|_0 = s$. For a fixed value of $\epsilon \in (0,1/3)$, if $\lambda > \max\{\lambda_1^{OU},\lambda_2^{OU}\}$, $0 < k <\sqrt{\mathfrak{l}_{\min}}$ and  $T > T_1^{OU}(3\epsilon/4,\alpha,\beta)$, with probability at least $1-3\epsilon$ we have:
\begin{itemize}
    \item[(i)] If $\ln(d^2)/(n\dn ) \rightarrow 0$ and $1/( d^4 n \dn^2 ) \rightarrow 0$, then the error term dominates the convergence of the estimator and 
   \begin{equation}\label{rateconvl2_OU}
        \|\widehat{A}_L -A_0 \|_2 = O_{\P}\left(\frac{d^2}{k^2} \sqrt{s \dn\ln(d^2)}\right).
   \end{equation}
   \item[(ii)] If $\ln(d^2)/(n\dn ) \rightarrow 0$ and $1/( d^4 n \dn^2 ) \rightarrow \infty$, the error term is negligible and 
   \begin{equation}\label{rateconvl22_OU}
        \|\widehat{A}_L -A_0 \|_2 = O_{\P}\left(\frac{1}{k^2}\sqrt{\frac{ s \ln(d^2)}{n \dn}}\right).
   \end{equation}
   \end{itemize}
\end{corollary}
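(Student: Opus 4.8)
The plan is to follow the same route as the proof of Corollary \ref{consistencyResult}, now starting from the deterministic $l_2$ bound \eqref{l2bound_OU} supplied by Corollary \ref{CoroErrorBoundsOU}. On the event $\mathcal{T}\cap\mathcal{T}'\cap\mathcal{T}''$, which has probability at least $1-3\epsilon$ under the stated hypotheses, the bound
$$\|\widehat{A}_L - A_0\|_2^2 \leq \frac{4\lambda^2 s(2+\gamma)^2}{k^4\gamma\dn^2}$$
holds for every admissible $\lambda > \max\{\lambda_1^{OU},\lambda_2^{OU}\}$. Since the right-hand side is increasing in $\lambda$, the sharpest bound comes from taking $\lambda$ to be a fixed multiple of $\max\{\lambda_1^{OU},\lambda_2^{OU}\}$, and the whole argument reduces to (a) identifying which of the two candidate tuning parameters dominates asymptotically, and (b) substituting its explicit expression and simplifying.

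For step (a), I would compare $\lambda_1^{OU}$ and $\lambda_2^{OU}$ directly. Since $\epsilon$ and $\gamma$ are fixed and the factors $\sqrt{\ln(d^2)+\ln(2/\epsilon)}$ and $\sqrt{\ln(d^2)+\ln(1/\epsilon)}$ are asymptotically equivalent, while the model constants $\mathfrak{a}$, $\mathfrak{l}_{\min}$, $k$ and $C_b^{OU}$ are bounded and therefore affect only the $O_{\P}$ constant, the ratio collapses to
$$\frac{\lambda_2^{OU}}{\lambda_1^{OU}} \asymp \frac{d\dn^{3/2}}{\sqrt{\dn/n}} = d\dn\sqrt{n} = \sqrt{d^2 n \dn^2}.$$
Hence $\lambda_2^{OU}$ dominates precisely when $d^2 n\dn^2\to\infty$, i.e. $1/(d^2 n\dn^2)\to 0$ (case (i)), and $\lambda_1^{OU}$ dominates precisely when $d^2 n\dn^2\to 0$, i.e. $1/(d^2 n\dn^2)\to\infty$ (case (ii)). This threshold $\sqrt{d^2 n\dn^2}$ is the OU analogue of $\sqrt{s^2 d n\dn^2}$ in the linear model; the replacement of $s^2 d$ by $d^2=p$ reflects exactly the point flagged in the introduction, namely that the sparsity cannot be tracked as explicitly inside the OU concentration bounds.

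For step (b), in case (i) I would substitute $\lambda\asymp\lambda_2^{OU}$, so that $(\lambda_2^{OU})^2\asymp d^2\dn^3\ln(d^2)$, into the bound above and cancel one factor of $\dn^2$ to get $\|\widehat{A}_L-A_0\|_2^2\lesssim d^2\dn\, s\,\ln(d^2)/k^4$, which is \eqref{rateconvl2_OU}. In case (ii) I would instead substitute $\lambda\asymp\lambda_1^{OU}$, with $(\lambda_1^{OU})^2\asymp\dn\ln(d^2)/n$, yielding $\|\widehat{A}_L-A_0\|_2^2\lesssim s\ln(d^2)/(n\dn k^4)$, which is \eqref{rateconvl22_OU}. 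The remaining hypothesis $\ln(d^2)/(n\dn)\to 0$ ensures that these bounds actually tend to zero, i.e. that the estimator is consistent; it is the natural counterpart of the condition $\ln(p)/(n\dn)\to 0$ used in the general linear case.

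The argument is essentially routine once the comparison in step (a) is in place, so the only genuinely delicate point is verifying that the bounded model constants and the two slightly different logarithmic factors in $\lambda_1^{OU}$ and $\lambda_2^{OU}$ do not interfere with the rate, entering only through the implicit $O_{\P}$ constant. I therefore expect no serious obstacle beyond careful bookkeeping of these constants, exactly as in the proof of Corollary \ref{consistencyResult}.
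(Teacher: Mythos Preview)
Your proposal is correct and follows essentially the same approach as the paper: start from the $l_2$ bound \eqref{l2bound_OU} of Corollary~\ref{CoroErrorBoundsOU}, determine which of $\lambda_1^{OU}$ and $\lambda_2^{OU}$ dominates under each asymptotic regime, and substitute. Your explicit computation of the ratio $\lambda_2^{OU}/\lambda_1^{OU}\asymp\sqrt{d^2 n\dn^2}$ simply spells out the comparison that the paper asserts without details.
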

\begin{proof}
    Under aforementioned assumptions, if the parameters of the model satisfy condition $\ln(d^2)/( n\dn) \rightarrow 0$ and $1/(  d^4 n \dn^2) \rightarrow 0$, we have that $\max\{\lambda_1^{OU},\lambda_2^{OU}\} = \lambda_2^{OU}.$ Therefore, from $\eqref{l2bound_OU}$, we obtain $\eqref{rateconvl2_OU}$.
    On the other hand, if $\ln(d^2)/ (n \dn) \rightarrow 0$ and $ 1/ (d^4 n \dn^2) \rightarrow \infty$, the maximum between $\lambda_1^{OU}$ and $\lambda_2^{OU}$ is given by $\lambda_{1}^{OU}$. Therefore, the term associated with the empirical part of the process is the one who dominates and, from $\eqref{l2bound_OU}$, we obtain $\eqref{rateconvl22_OU}$. 
\end{proof}

\begin{remark}
\normalfont
The key distinction between Corollary \ref{consistencyResult} and Corollary \ref{consistencyResult_OU} lies in the way the sparsity parameter $s$ impacts the rates of convergence. This distinction arises primarily due to two factors. First, the concentration inequality differs between the two cases, leading to variations in how the parameters are tracked. Second, in the general linear drift model, the inherent structure of the drift allows us to effectively exploit the sparsity of the vector, making it easier to track the influence of $s$ in the convergence rates, {thus highlighting the advantage of sparsity, particularly in regimes where $s \ll d$}. However, in the Ornstein-Uhlenbeck process, the matrix structure {together with the structural requirement $s \geq d$ imposed by \cref{AOU}} complicates the direct tracing of the sparsity parameter. Nevertheless, when we evaluate the effect of sampling, the impact on the convergence rates is essentially the same in both models, ensuring consistency across our findings.

\end{remark}

\section{The concentration inequalities}{\label{s: CI}}

In order to  control the sets on which our oracle inequality holds true, we will need to derive bounds for the probability of the sets $\mathcal{T}, \mathcal{T}'$ and $\mathcal{T}''$. Given the structure of these sets, it is evident that we will need to employ a concentration bound for functionals of the form
\begin{equation}\label{functionalCI}
    \frac{1}{n} \sum_{i=1}^n\left( F(X_{t_{i}}) - \E\left[F(X_{t_{i}})\right]\right),
\end{equation}
where $F : \R^d \rightarrow \R$ satisfies specific assumptions, depending on the goal and the nature of the problem, and $X_{t_1},\dots,X_{t_n}$ are observations of the stochastic differential equation $\eqref{model}$. In particular, due to the structure of our sets, we will provide a concentration inequality under the assumption that $F$ is Lipschitz.

Concentration inequalities (CI) for additive functionals are crucial probabilistic tools in statistics and other areas as machine learning or mathematical finance. There is increasing focus on concentration inequalities as in general terms, they allow us to quantify the deviation of an estimator or a stochastic element from its expected value or a target quantity. While CI for independent or strong mixed data have been explored for many years (see \cite{ sam00, Zhang20}), deep results for additive functionals of diffusion processes are limited.
Remarkable results on this area are presented in \cite{Str21, djellout2004transportation, Tro23, varvenne2019concentration} among others.  To the best of our knowledge, none of them have explicitly tracked the influence of the discretization step $\dn$ of the observations, nor the dimension of the  process. {As understanding the role of $\Delta_n$ is crucial for assessing the influence of various parameters in our high-dimensional setting, we derive two exponential concentration inequalities corresponding to the two drift structures considered in this work. The first one is established for the general linear drift under \cref{A4}. Since the OU process does not satisfy this assumption, we develop a separate, tailored concentration inequality using tools from Malliavin calculus.} \\

\subsection{Concentration inequality for the general linear drift model}
{We begin by studying the general linear drift setting, where the concentration inequality we derive is inspired by the approach developed in Varvenne’s work \cite{varvenne2019concentration}. In that paper, a similar strategy is applied in the (more involving) context of diffusions driven by fractional Brownian motion. Our contribution lies in simplifying this methodology to the standard Brownian setting, while explicitly and carefully tracking the influence of the discretization step \( \Delta_n \), which plays a crucial role in our analysis. A key assumption for deriving the concentration inequality is the Lipschitz continuity of the contrast functional in \eqref{functionalCI}, which motivates \cref{A4}. Before stating the main result, we introduce the following notation.}

For a fixed value $n \in \N$, we consider  $x \in (\R^d)^n$, such that $x = (x_1,\dots,x_n)$.  Given two elements $x,y \in (\R^d)^n$, we define $d_n$  as \begin{equation*}
    d_n(x,y)^2 = \sum_{i=1}^n \| x_i - y_i \|_2^2.
\end{equation*}
Let  $F:((\R^d)^n,d_n) \rightarrow (\R, |\cdot|)$ be a Lipschitz function, we  define the Lipschitz norm of $F$ by 
\begin{equation*}
\| F\|_{\text{Lip}} = \sup_{x \neq y} \frac{|F(x)-F(y)|}{d_n(x,y)}.
\end{equation*}

\begin{proposition}\label{ConcentrationInequality}  Assume that the drift function satisfies   $\cref{A1}-\cref{A2}$. Then, for all Lipschitz functionals $F:\left((\R^d)^n;d_n\right) \rightarrow \left(\R;|\cdot|\right)$,  $\forall \eta>0$ we have that
\begin{equation*}
     \E\left[\exp{(\eta(F_X - \E[F_X]))}\right] \leq \exp{\left(\frac{16\eta^2 d^2 \| F\|_{\text{\rm Lip}}^2 n \dn \exp(4L\dn) }{(1-\exp(-M \dn))^2}\right)},
\end{equation*}
where $F_X = F(X_{t_1},\dots,X_{t_n})$ and $L,M >0$ are constants from \cref{A1}-\cref{A2}.\\\\
In addition, if  $F = \frac{1}{n}\sum_{i=1}^n f(X_{t_i})$ for a Lipschitz function $f: \R^d \rightarrow \R$, it is
\begin{equation*}\label{CIgoodbound}
     \E\left[\exp\left(\frac{\eta}{n}\sum_{i=1}^n (f(X_{t_i}) -\E[f(X_{t_i})]) \right)\right] \leq \exp{\left(\frac{16\eta^2 d^2 \| f\|_{\text{\rm Lip}}^2  \dn \exp(4L\dn) }{n(1-\exp(-M \dn))^2}\right)},
\end{equation*}
and then,
\begin{equation*}
     \P\left(\frac{1}{n}\sum_{i=1}^n \left(f(X_{t_i}) -\E[f(X_{t_i})]\right) > r\right) \leq \exp{\left(\frac{-r^2 n (1-\exp(-M \dn))^2 }{64 d^2 \| f\|_{\text{\rm Lip}}^2  \dn \exp(4L\dn)}\right)}.
\end{equation*}
\end{proposition}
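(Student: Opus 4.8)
The plan is to establish the three displayed statements in sequence, with essentially all the work going into the first one (the exponential-moment bound for a general Lipschitz functional $F$); the additive-functional bound and the tail inequality then follow by routine reductions. To bound $\Eb{\exp(\eta(F_X - \E[F_X]))}$ I would exploit that, along the observation grid, $(X_{t_i})$ is a time-homogeneous Markov chain driven by the \emph{independent} Brownian increments on the successive intervals $[t_{i-1},t_i]$, and combine a Doob martingale decomposition with a Gaussian concentration estimate carried out one increment at a time. This is the martingale-decomposition route adapted from the Gaussian-space argument of \cite{varvenne2019concentration}.

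Writing $\mathcal{F}_i := \mathcal{F}_{t_i}$, I would decompose
\[
F_X - \E[F_X] = \sum_{i=1}^n D_i, \qquad D_i := \Eb{F_X \mid \mathcal{F}_i} - \Eb{F_X \mid \mathcal{F}_{i-1}},
\]
a sum of martingale differences. The aim is a per-step conditional sub-Gaussian bound $\Eb{e^{\eta D_i}\mid\mathcal{F}_{i-1}} \le \exp(\eta^2 c_i^2/2)$ with a deterministic proxy $c_i^2$; granting this, peeling off the conditional expectations from $i=n$ down to $i=1$ (the exponential-supermartingale argument) yields $\Eb{e^{\eta(F_X-\E F_X)}} \le \exp(\eta^2\sum_i c_i^2/2)$, after which it only remains to verify that $\sum_i c_i^2$ matches the claimed proxy.

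The core — and the step I expect to be the main obstacle — is the per-increment estimate. By the Markov property, $\Eb{F_X\mid\mathcal{F}_i}$ is a function of $(X_{t_1},\dots,X_{t_i})$ alone, so conditionally on $\mathcal{F}_{i-1}$ the difference $D_i$ is a centred functional of the single Brownian path increment on $[t_{i-1},t_i]$. I would control its Cameron--Martin Lipschitz constant by tracking how a perturbation of this increment propagates forward through the (nonlinear) dynamics: \cref{A1}, with drift Lipschitz constant $L$, bounds the growth of the flow across one step by a factor $e^{L\dn}$, while the dissipativity of \cref{A2} forces the effect of a perturbation at time $t_i$ on a later observation $X_{t_j}$ to contract like $e^{-M(t_j-t_i)}$; summing these contractive contributions against $\|F\|_{\text{Lip}}$ produces the geometric factor $(1-e^{-M\dn})^{-1}$. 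Converting the resulting Lipschitz bound into an exponential moment via Gaussian concentration for the (length-$\dn$, dimension-$d$) increment is what introduces the factors $\dn$ and $d$. The delicate point is to carry out this propagation-and-contraction bookkeeping \emph{non-asymptotically}, keeping every dependence on $\dn$, $d$, $L$, $M$ explicit, since it is exactly this explicit tracking of $\dn$ and $d$ that distinguishes the result from the existing concentration bounds discussed in \cref{s: CI}.

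Finally, summing the $n$ identical per-step proxies produces the factor $n\dn$ in the numerator and completes the first bound. For the additive case $F=\tfrac1n\sum_i f(X_{t_i})$, I would compute $\|F\|_{\text{Lip}}$ from $\|f\|_{\text{Lip}}$ by a Cauchy--Schwarz step over the $n$ coordinates and substitute into the general bound. The concentration (tail) inequality then follows from the exponential Markov inequality $\P(\,\cdot > r)\le e^{-\eta r}\,\Eb{e^{\eta(\cdot)}}$, optimising over $\eta>0$.
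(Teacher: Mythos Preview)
Your proposal is correct and follows essentially the same approach as the paper: a Doob martingale decomposition of $F_X-\E[F_X]$ into increments indexed by the Brownian blocks, a per-step sub-Gaussian bound obtained by coupling the trajectory to one driven by an independent Brownian increment and propagating the perturbation forward (growth $e^{L\dn}$ over one step via \cref{A1}, contraction $e^{-M(t_u-t_2)}$ afterwards via \cref{A2} and Gronwall), Gaussian moment bounds for the $d$-dimensional increment of length $\dn$, and finally peeling off the conditional expectations and summing the geometric series. The only cosmetic differences are that the paper makes the coupling with an independent copy $\tilde w$ explicit and bounds $\E[|M_k-M_{k-1}|^p\mid\mathcal{F}_{t_{k-1}}]$ for all $p$ before invoking a moment-to-exponential lemma (Lemma~3.1 of \cite{varvenne2019concentration}), whereas you phrase the same step as a Cameron--Martin Lipschitz estimate plus Gaussian concentration; and the per-step proxies $\Psi_{n,k}^2$ are not literally identical in $k$ but are uniformly bounded by $(1-e^{-M\dn})^{-2}$, which gives the same final sum.
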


\noindent
The proof of this Proposition can be consulted in Section \ref{s: proofCI}.

\subsection{Concentration inequality for the OU case}
In contrast to the general linear model, the OU process does not satisfy \cref{A4}. Therefore, a different strategy must be adopted to present a suitable concentration inequality. In this case, our approach is inspired by the work of \cite{CMP20}, where techniques from Malliavin calculus are employed to handle a similar challenge.

We now present some preliminaries on Malliavin calculus. For a comprehensive treatment of the subject, we refer the reader to \cite{nualart2006malliavin}.

Let \(\mathbb{H}\) be a real separable Hilbert space. We denote by \(B := \{ B(h) : h \in \mathbb{H} \}\) an isonormal Gaussian process over \(\mathbb{H}\), meaning that \(B\) is a centered Gaussian family with covariance structure
\[
\E[B(h_1) B(h_2)] = \langle h_1, h_2 \rangle_{\mathbb{H}}, \quad \text{for all } h_1, h_2 \in \mathbb{H}.
\]

Let \(\mathcal{S}\) denote the set of smooth cylindrical random variables of the form \\ \(F = f(B(h_1), \ldots, B(h_n))\), where \(n \geq 1\), \(f: \mathbb{R}^n \rightarrow \mathbb{R}\) is a \(C^{\infty}\)-function with compact support, and \(h_i \in \mathbb{H}\).

The Malliavin derivative \(D F\) of \(F\) is defined as
\[
D F := \sum_{i=1}^n \frac{\partial f}{\partial x_i}\left(B(h_1), \ldots, B(h_n)\right) h_i.
\]
We introduce the norm
\[
\|F\|_{1,2}^2 := \mathbb{E}[F^2] + \mathbb{E}[\|D F\|_{\mathbb{H}}^2],
\]
and define \(\mathbb{D}^{1,2}\) as the closure of \(\mathcal{S}\) with respect to this norm.

A key property of the Malliavin derivative \(D\) is that it satisfies a chain rule. Specifically, if \(\varphi: \mathbb{R}^n \rightarrow \mathbb{R}\) is a continuously differentiable function with bounded partial derivatives (i.e., \(\varphi \in C_b^1\)) and \((F_1, \ldots, F_n)\) is a vector of elements in \(\mathbb{D}^{1,2}\), then \(\varphi(F_1, \ldots, F_n) \in \mathbb{D}^{1,2}\), and
\[
D \varphi(F_1, \ldots, F_n) = \sum_{i=1}^n \frac{\partial \varphi}{\partial x_i}(F_1, \ldots, F_n) D F_i.
\]

The concentration inequality we will use follows from the following key result, which provides tail bounds for certain elements in the space \(\mathbb{D}^{1,2}\).

\begin{theorem}\label{TheoremIvan}(\cite{Nou09}, Theorem 4.1).
Assume that the random variable $Z \in \mathbb{D}^{1,2}$ and define the function
\[
g_Z(z) := \mathbb{E} \left[ \langle D Z, - D L^{-1} Z \rangle_{\mathbb{H}} \mid Z = z \right],
\]
where $L$ denotes the generator of the so-called Ornstein-Uhlenbeck semigroup and $D$ is the Malliavin derivative operator defined in  a Hilbert space denoted by $\mathbb{H}$, as above.
Suppose that the following conditions hold for some $\alpha \geq 0$ and $\beta > 0$:
\begin{enumerate}
    \item[(i)] $g_Z(Z) \leq \alpha Z + \beta$ holds $\P$-almost surely,
    \item[(ii)] The law of $Z$ has a Lebesgue density.
\end{enumerate}
Then, for any $z > 0$, it holds that
\begin{equation*}
\P(Z \geq z) \leq \exp\left( -\frac{z^2}{2\alpha z + 2\beta} \right) \hspace{0.3cm}\text{and} \hspace{0.3cm} \P(Z \leq -z) \leq \exp\left( -\frac{z^2}{2\beta} \right).
\end{equation*}
\end{theorem}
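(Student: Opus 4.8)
The plan is to deduce both tail bounds from a single Malliavin integration-by-parts identity that generates the auxiliary function $g_Z$, combined with a Bernstein-type differential inequality for the Laplace transform of $Z$. Throughout I treat $Z$ as centered (i.e.\ $\E[Z]=0$), which is implicit in the statement since the pseudo-inverse $L^{-1}$ acts on the orthogonal complement of the constants and satisfies $-\delta D L^{-1}Z = Z - \E[Z]$, with $\delta$ the divergence (Skorokhod) operator adjoint to $D$.

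The first step is to establish, for smooth enough $\phi$, the identity
\begin{equation*}
\E[Z\,\phi(Z)] = \E\left[ g_Z(Z)\,\phi'(Z)\right].
\end{equation*}
Starting from $Z=-\delta D L^{-1}Z$ and using the duality $\E[F\,\delta(u)]=\E[\langle DF,u\rangle_{\mathbb{H}}]$ with $F=\phi(Z)$ and $u=-DL^{-1}Z$, together with the chain rule $D\phi(Z)=\phi'(Z)\,DZ$, gives $\E[Z\phi(Z)]=\E[\phi'(Z)\langle DZ,-DL^{-1}Z\rangle_{\mathbb{H}}]$. Conditioning on $Z$ and invoking hypothesis (ii), which guarantees that the density---and hence the conditional expectation defining $g_Z$---is well posed, replaces $\langle DZ,-DL^{-1}Z\rangle_{\mathbb{H}}$ by $g_Z(Z)$ inside the expectation.

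Next I would turn this into control of $\psi(\theta):=\ln\E[e^{\theta Z}]$. Applying the identity with $\phi(z)=e^{\theta z}$ gives $\E[Z e^{\theta Z}]=\theta\,\E[g_Z(Z)e^{\theta Z}]$, so that $\psi'(\theta)=\theta\,\E[g_Z(Z)e^{\theta Z}]/\E[e^{\theta Z}]$. For $\theta\ge 0$, hypothesis (i) and the positivity of $e^{\theta Z}$ yield $\psi'(\theta)\le\theta(\alpha\psi'(\theta)+\beta)$, hence $\psi'(\theta)\le \beta\theta/(1-\alpha\theta)$ on $[0,1/\alpha)$; integrating from $0$ with $\psi(0)=0$ and the crude bound $1-\alpha u\ge 1-\alpha\theta$ gives $\psi(\theta)\le \beta\theta^2/(2(1-\alpha\theta))$. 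A Chernoff estimate $\P(Z\ge z)\le e^{-\theta z+\psi(\theta)}$ optimized at $\theta=z/(\alpha z+\beta)$ then delivers the upper tail $\exp(-z^2/(2\alpha z+2\beta))$. For $\theta<0$ the sign reversal when multiplying (i) by $\theta$ removes the $\alpha$ term: one obtains $\psi'(u)\ge \beta u/(1-\alpha u)\ge \beta u$, whence $\psi(\theta)\le\beta\theta^2/2$, and the Chernoff bound for the left tail optimized at $\theta=-z/\beta$ gives $\exp(-z^2/(2\beta))$.

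The genuine difficulty is analytic, not algebraic: the identity in the first step is transparent only for smooth, compactly supported $\phi$, while I apply it to $\phi(z)=e^{\theta z}$, which is unbounded and has unbounded derivative, and I must know a priori that $\E[e^{\theta Z}]<\infty$ so that $\psi$ is finite and can be differentiated under the expectation sign. I would resolve this by approximating $e^{\theta z}$ with bounded Lipschitz truncations $\phi_N$, passing to the limit by monotone/dominated convergence, and bootstrapping the resulting estimates to certify finiteness of the Laplace transform on $(-\infty,1/\alpha)$. The nonnegativity of $g_Z$, which follows from the representation under (ii), is convenient but---as the derivation shows---not strictly required for either tail.
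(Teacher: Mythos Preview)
The paper does not prove this theorem; it is quoted verbatim from \cite{Nou09} and used as a black-box tool to establish Proposition~\ref{PropCI_OU}. There is therefore no ``paper's own proof'' to compare against.

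That said, your argument is the standard one from Nourdin--Viens and is essentially correct. The integration-by-parts identity $\E[Z\phi(Z)]=\E[g_Z(Z)\phi'(Z)]$ via $Z=-\delta DL^{-1}Z$, the resulting differential inequality $\psi'(\theta)\le \beta\theta/(1-\alpha\theta)$ on $[0,1/\alpha)$, and the Chernoff optimisation at $\theta=z/(\alpha z+\beta)$ are all exactly right, as is the lower-tail computation. Your remark that $\E[Z]=0$ is implicit in the statement is accurate: $L^{-1}$ is only defined on the orthogonal complement of the constants, and the conclusion as written would be false for non-centered $Z$. The integrability bootstrap you sketch (truncate, pass to the limit) is also the standard way to justify differentiating $\psi$ and applying the identity with the unbounded $\phi(z)=e^{\theta z}$; it works without difficulty here. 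One minor quibble: your final sentence suggests nonnegativity of $g_Z$ is optional, but in the original reference it is used to ensure $\psi$ is well-defined and increasing on $[0,1/\alpha)$, which is part of what makes the bootstrap clean; your derivation indeed avoids it at the formal level, but a fully rigorous write-up would likely still invoke it.
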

The following concentration inequality is derived by applying Theorem  \ref{TheoremIvan} to a quadratic form of the OU process, related to the empirical covariance matrix.
\begin{proposition}\label{PropCI_OU}
    Under  \cref{AOU} and \cref{LminLmaxCinf}, for all $x> 0$, it holds that 
       \begin{equation}\label{CIOUbruta}
        \sup_{ v \in \R^d, \|v\|_2 =1} \P_{A_0}\left(\left|v^{\star}(C_T - C_{\infty})v \right| > x\right) \leq 2\exp\left(-\frac{x^2 n(1 - \exp(-\mathfrak{m}\dn))(x+\mathfrak{l}_{\max})^{-1}}{8\mathfrak{p}_0\mathfrak{l}_{\max}\left(1 - \exp(-(n+1)\mathfrak{m}\dn)\right)}\right).
    \end{equation}
    In particular, when $\dn \rightarrow 0$,
    \begin{equation*}
        \sup_{ v \in \R^d, \|v\|_2 =1} \P_{A_0}\left(\left|v^{\star}(C_T - C_{\infty})v \right| > x\right) \leq 2\exp(-n\dn H_0(x))
    \end{equation*}
    where the function $H_0$ is defined as 
    \begin{equation*}
        H_0(x) = \frac{\mathfrak{m}}{8\mathfrak{p}_0\mathfrak{l}_{\max}}\frac{x^2}{x + \mathfrak{l}_{\max}}.
    \end{equation*}
\end{proposition}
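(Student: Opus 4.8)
The plan is to apply Theorem~\ref{TheoremIvan} to the centered quadratic functional itself. Fix $v \in \R^d$ with $\|v\|_2 = 1$ and set $Y_i := v^{\star} X_{t_{i-1}}$, so that
\[
Z := v^{\star}(C_T - C_{\infty})v = \frac{1}{n}\sum_{i=1}^n \left( Y_i^2 - \E[Y_i^2]\right),
\]
where stationarity gives $\E[Y_i^2] = v^{\star} C_{\infty} v$ for every $i$. Since the stationary solution admits the Wiener-integral representation $X_t = \int_{-\infty}^t \exp(-(t-s)A_0)\,dW_s$, each $Y_i$ is a centered Gaussian variable of the form $Y_i = B(h_i)$ for $h_i(s) = \id{\{s \le t_{i-1}\}}\,(\exp(-(t_{i-1}-s)A_0))^{\star} v \in \mathbb{H} := L^2(\R;\R^d)$, with $B$ the associated isonormal Gaussian process. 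Consequently $Y_i^2 - \E[Y_i^2]$ lies in the second Wiener chaos, and so does $Z$. The strategy is to verify hypotheses (i) and (ii) of Theorem~\ref{TheoremIvan} for this $Z$, then optimise the resulting tail bound.

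The core is a Malliavin computation. By the chain rule, $DZ = \frac{2}{n}\sum_{i=1}^n Y_i h_i$, and because $Z$ belongs to the second chaos we have the identity $-DL^{-1}Z = \tfrac12 DZ$, so that $g_Z(Z) = \tfrac12\,\E\big[\|DZ\|_{\mathbb{H}}^2 \mid Z\big]$. Writing $Y = (Y_1,\dots,Y_n)^{\star}$ and introducing the $n\times n$ Gram/covariance matrix $\Sigma = (\langle h_i,h_j\rangle_{\mathbb{H}})_{i,j} = (\E[Y_iY_j])_{i,j}$, one gets $\|DZ\|_{\mathbb{H}}^2 = \frac{4}{n^2}\, Y^{\star}\Sigma Y$. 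The key observation is that $Y^{\star}\Sigma Y \le \lambda_{\max}(\Sigma)\,\|Y\|_2^2$ pointwise, while $\|Y\|_2^2 = nZ + \tr\Sigma$ is a deterministic function of $Z$ (with $\tr\Sigma = n\,v^{\star}C_{\infty}v$). Hence the conditional expectation is bounded almost surely, yielding condition (i) with
\[
\alpha = \frac{2\lambda_{\max}(\Sigma)}{n}, \qquad \beta = \frac{2\lambda_{\max}(\Sigma)\,v^{\star}C_{\infty}v}{n}.
\]
Condition (ii) holds since $Z$ is a nonzero second-chaos element (the $h_i$ are nonzero because $v^{\star}C_{\infty}v \ge \mathfrak{l}_{\min} > 0$), hence its law is absolutely continuous.

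It remains to bound $\lambda_{\max}(\Sigma)$ with explicit dependence on $\dn$ and $n$, which I expect to be the main obstacle. Using the stationary autocovariance $\E[X_{t_{i-1}}X_{t_{j-1}}^{\star}] = \exp(-(i-j)\dn A_0)\,C_{\infty}$ for $i \ge j$, the diagonalisation in \cref{AOU} gives $\nop{\exp(-hA_0)} \le \mathfrak{p}_0\,\exp(-h\mathfrak{m})$, and with $\nop{C_{\infty}} = \mathfrak{l}_{\max}$ (from \cref{LminLmaxCinf}) one obtains $|\Sigma_{ij}| \le \mathfrak{p}_0\,\mathfrak{l}_{\max}\,\exp(-|i-j|\mathfrak{m}\dn)$. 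Since $\Sigma$ is symmetric, $\lambda_{\max}(\Sigma) \le \max_i \sum_j |\Sigma_{ij}|$, and summing the two-sided geometric series gives $\sum_{j}\exp(-|i-j|\mathfrak{m}\dn) \le 2\,(1-\exp(-(n+1)\mathfrak{m}\dn))/(1-\exp(-\mathfrak{m}\dn))$, hence
\[
\lambda_{\max}(\Sigma) \le \frac{2\,\mathfrak{p}_0\,\mathfrak{l}_{\max}\,(1-\exp(-(n+1)\mathfrak{m}\dn))}{1-\exp(-\mathfrak{m}\dn)}.
\]

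Finally I would combine the two tails of Theorem~\ref{TheoremIvan}: since $\alpha x \ge 0$ one has $\P(|Z|>x) \le 2\exp(-x^2/(2\alpha x + 2\beta))$, where $2\alpha x + 2\beta = \tfrac{4\lambda_{\max}(\Sigma)}{n}(x + v^{\star}C_{\infty}v)$. Bounding $v^{\star}C_{\infty}v \le \mathfrak{l}_{\max}$ and inserting the estimate for $\lambda_{\max}(\Sigma)$ produces exactly \eqref{CIOUbruta}, uniformly over $\|v\|_2 = 1$. The asymptotic form with $H_0$ then follows by letting $\dn \to 0$: since $n\dn = T$ is large, $1 - \exp(-(n+1)\mathfrak{m}\dn) \to 1$, while $1 - \exp(-\mathfrak{m}\dn) \sim \mathfrak{m}\dn$, so the exponent converges to $-n\dn\, H_0(x)$ with $H_0(x) = \frac{\mathfrak{m}}{8\mathfrak{p}_0\mathfrak{l}_{\max}}\frac{x^2}{x+\mathfrak{l}_{\max}}$.
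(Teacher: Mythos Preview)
Your proof is correct and follows essentially the same route as the paper: both reduce to Theorem~\ref{TheoremIvan} applied to the second-chaos variable $Z$, bound $\tfrac12\|DZ\|_{\mathbb H}^2$ by an affine function of $Z$ using that $\|Y\|_2^2 = n(Z + v^{\star}C_\infty v)$, and control the covariance via $|\rho_v(k\dn)|\le \mathfrak{p}_0\mathfrak{l}_{\max}\exp(-k\mathfrak{m}\dn)$ summed as a geometric series. Your packaging via $Y^{\star}\Sigma Y\le \lambda_{\max}(\Sigma)\|Y\|_2^2$ together with the Gershgorin row-sum bound on $\lambda_{\max}(\Sigma)$ is just a slightly more structured rewriting of the paper's direct use of $|Y_iY_j|\le\tfrac12(Y_i^2+Y_j^2)$, and it yields the identical constants $\alpha,\beta$ and hence \eqref{CIOUbruta}.
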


\section{Numerical results}{\label{s: num}}
In this section, we aim to validate our theoretical results by applying them to a simulated process. Specifically, we compare the estimation performance of the maximum likelihood estimator (MLE) and the proposed Lasso estimator for the drift parameter of a diffusion process in a high-dimensional setting. Additionally, we present a study of the $l_1$ and $l_2$ errors of the estimators as the dimension of the parameter increases.
Using the YUIMA package \cite{yuima}, we will simulate a $d$-dimensional diffusion process that satisfies $\eqref{model}$ and whose drift function is given by 
\begin{equation}\label{diffSim}
    b_{\theta_0}(x) = 3sx + \sum_{j=1}^p \theta_0^j \cos\left((j+1)x\right), \qquad x\in \mathbb{R}^d,
\end{equation}
where $\theta_0 \in \R^p$. In particular, for our simulations, we set the true parameter values $ \theta_0^i $ to be uniformly distributed between $ 2 $ and $ 3 $, ensuring that equation \eqref{diffSim} satisfies the model assumptions listed in Subsection \ref{ss: ass}. Furthermore, we clarify that the cosine function in \eqref{diffSim} is applied componentwise for $ x \in \mathbb{R}^p $.

{The drift function in our simulated process consists of a linear component combined with oscillatory terms, introducing periodic fluctuations in the system's dynamics. These oscillations can model external periodic influences, spatially structured environments, or interactions with an underlying periodic potential. Such formulations appear across various disciplines. In biology, they describe diffusion processes with periodic coefficients, relevant to intracellular transport under structured force fields \cite{Rei02}. Similarly, in quantum mechanics, periodic perturbations in drift terms serve as approximations for semiclassical particle motion in optical or magnetic confinement systems \cite{Fot05}.}

In the first experiment, we aim to compare the performance of the estimators in terms of both estimation accuracy and support recovery. Our goal is to evaluate whether the Lasso estimator captures the sparsity structure more effectively than the classical MLE. We simulate a 10-dimensional diffusion process with a drift of the form \eqref{diffSim}, where true parameter $\theta_0$ belongs to $\mathbb{R}^{30}$, i.e., we set $p = 30$. The time horizon is fixed at $T = 7$, and the true parameter $\theta_0$ is generated with 70\% sparsity, meaning that only 30\% of its components are non-zero, and these values are uniformly distributed between 2 and 3. The process is discretized using a time step of $\Delta_n = 0.01$, as further reductions in the discretization step did not yield significant improvements, a phenomenon also observed in \cite{Gai19}. 
Figure~\ref{fig:trajectories} presents the simulated trajectories of the process across its 10 dimensions. This visualization provides a clearer understanding of the diffusion process, offering insight into its dynamics within the higher-dimensional context. We point out that the selection of  the  tuning parameter $\lambda$ has been done by using the cross-validation criteria described in Section 5 of \cite{Dex22}.
\begin{figure}[H]
\centering
\includegraphics[width=450pt]{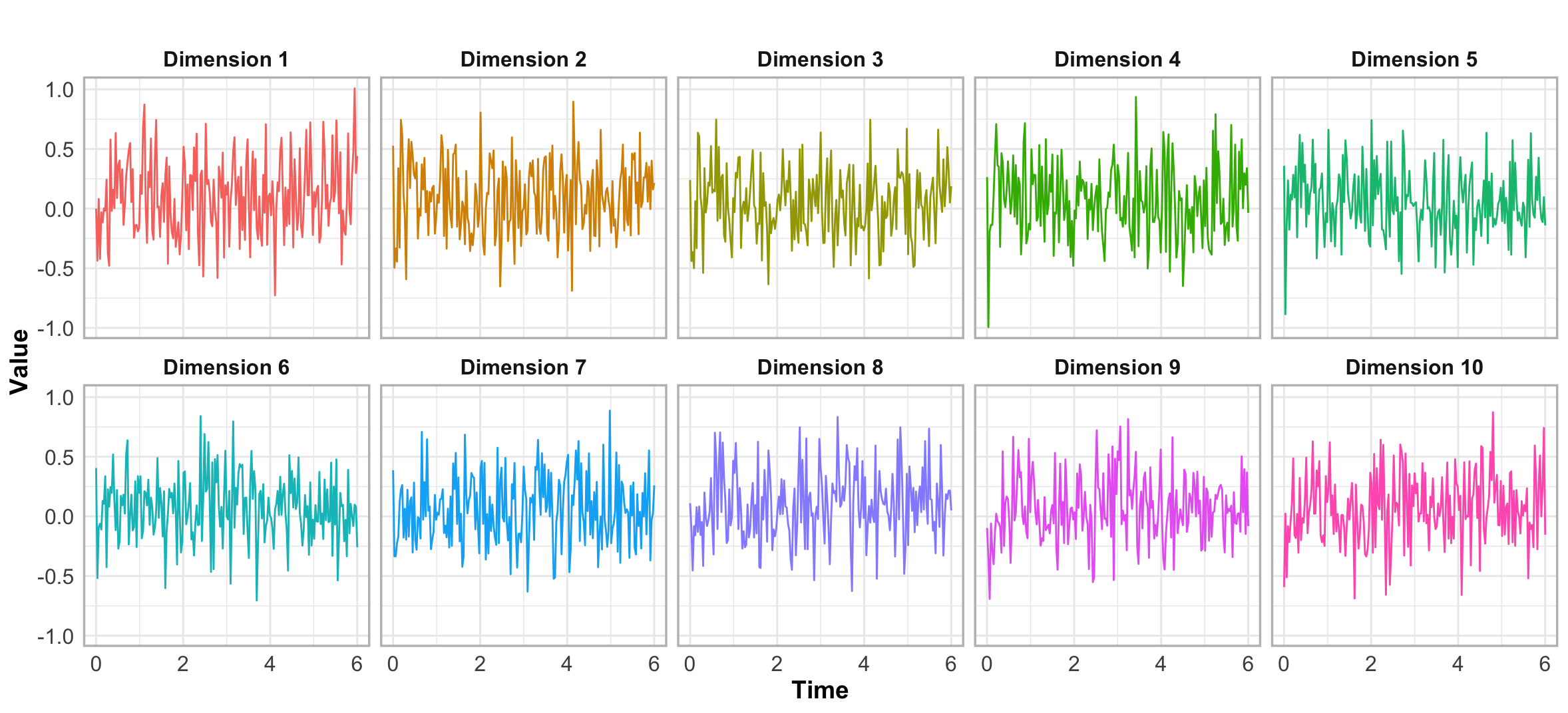}
\caption{Simulated sample paths of the diffusion process, shown across its different dimensions. Each graphic represents the evolution of the process for a concrete dimension.}
    \label{fig:trajectories}
\end{figure}
\noindent
We present our results in Figure~\ref{fig: matrix}. To compare the performance of the MLE and the Lasso estimator,  we have used a heatmap to emphasize the non-zero values of the parameter. We observe that the  estimation obtained through the Lasso regularisation is significantly better than the  classical MLE, particularly in terms of support recovery, even for a relatively small dimension of true parameter.

\begin{figure}[H]
\centering
\includegraphics[width=450pt]{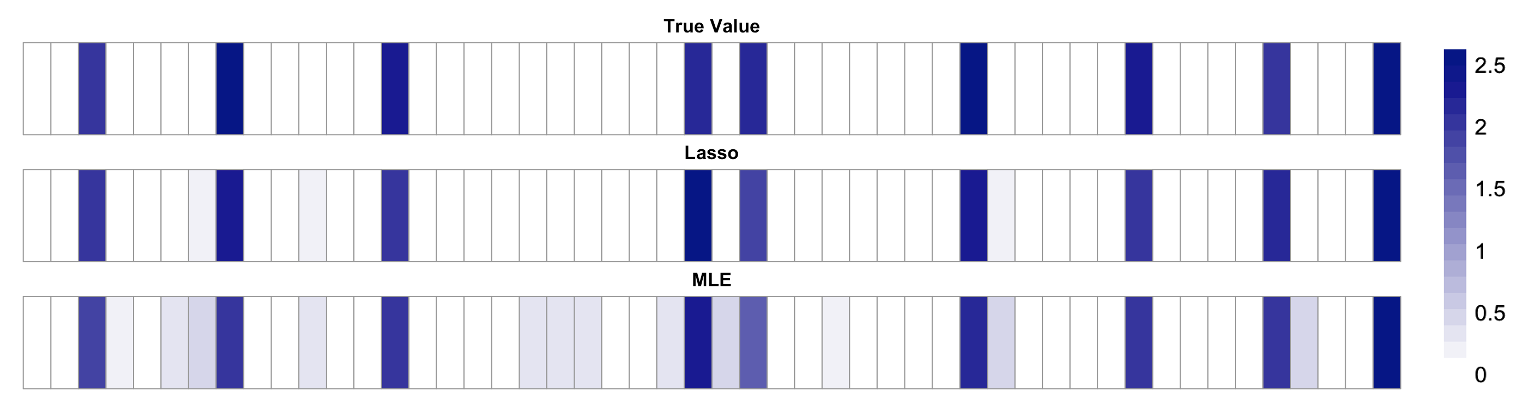}
\caption{Comparison of the sparse true parameter and the estimated parameter using MLE  and Lasso Estimator.}
    \label{fig: matrix}
\end{figure}
\noindent
In the second experiment, we analyse the $l_1$ and $l_2$ estimation errors of the MLE and Lasso as the dimension of the parameter $p$ varies. Our objective is to assess how both estimators scale when increasing the dimensionality of $\theta_0$. 
For this purpose, we simulate a $10$-dimensional diffusion process with a drift function of the form \eqref{diffSim}, for different values of $p$, ranging from $10$ to $50$, while keeping the time horizon fixed at $T = 5$. The true parameter $\theta_0$ is generated with 80\% sparsity, and the non-zero values are drawn uniformly between 2 and 3. For each value of $p$, the error is computed over 30 iterations, where in all the cases, we set a discretization step of $\Delta_n = 0.01$, and the tuning parameter $\lambda$  is selected via cross-validation. Results are presented in Figure~\ref{fig:L1 grafico} and Figure~\ref{fig:L2 grafico}. We observe that in both cases, the Lasso estimator outperforms the classical estimator, thus validating the theoretical results.

\begin{figure}[H]
\centering
    \includegraphics[width=450pt]{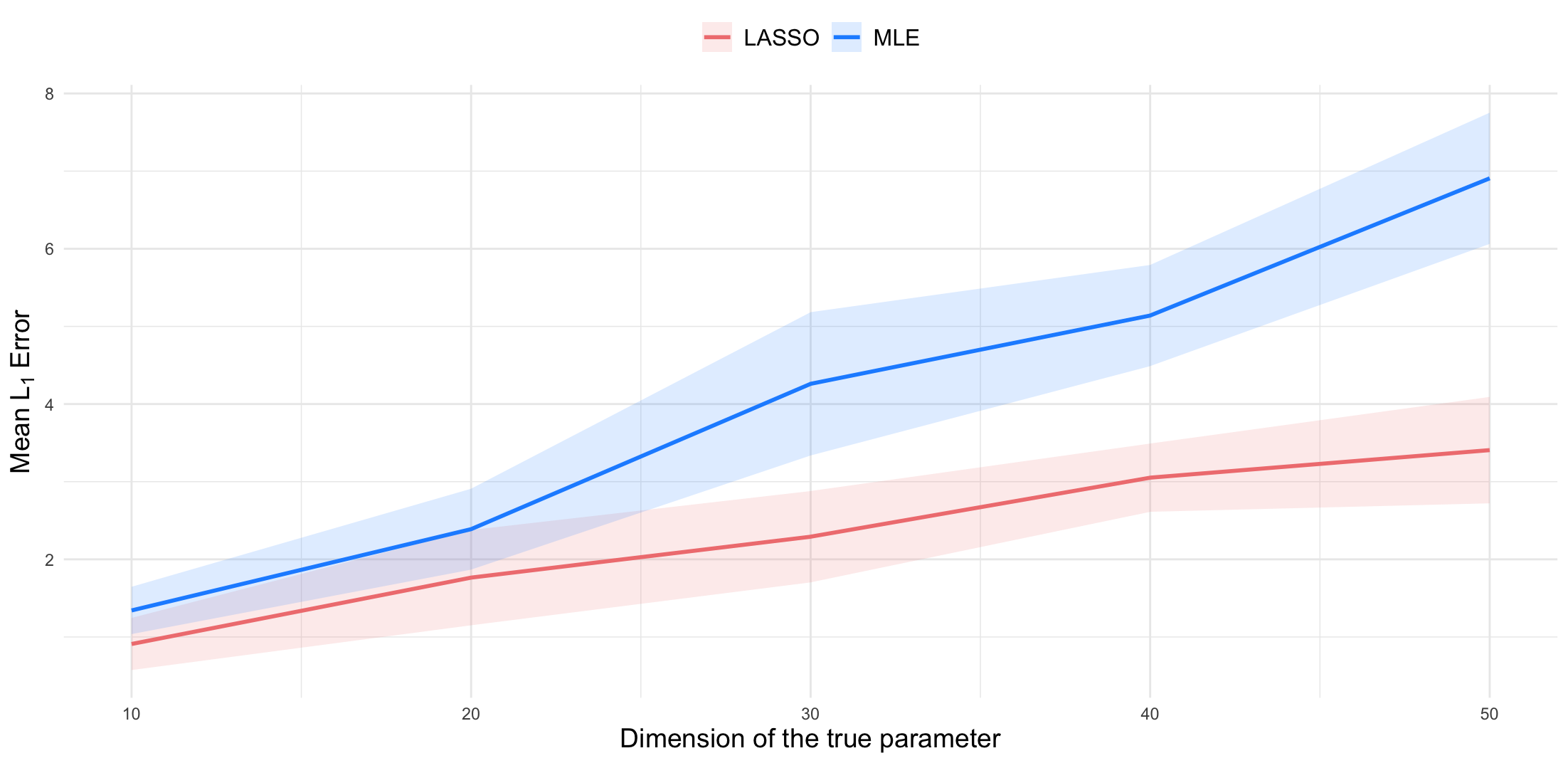}
    \caption{$l_1$ mean error of the MLE and the Lasso estimator $\pm$ one standard deviation.}
    \label{fig:L1 grafico}
\end{figure}
\begin{figure}[H]
\centering
    \includegraphics[width=450pt]{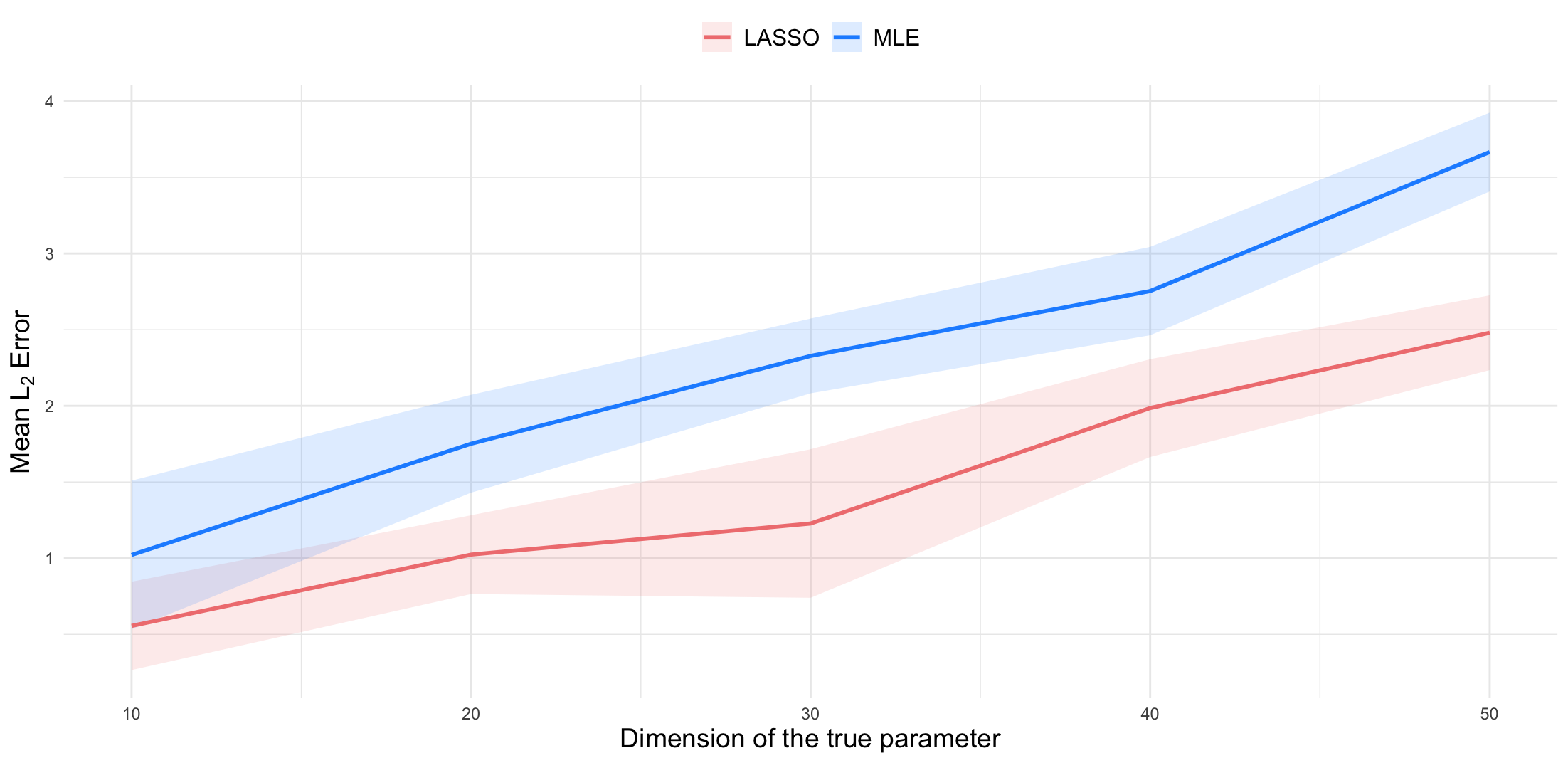}
    \caption{$l_2$ mean error of the MLE and the Lasso estimator $\pm$ one standard deviation.}
    \label{fig:L2 grafico}
\end{figure}

\section{Conclusions and outlook}\label{s: CAO}
Our work addresses the problem of estimating the drift parameters in high-dimensional diffusion processes observed at discrete time points. To this end, we propose the use of the Lasso estimator under a sparsity assumption on the unknown parameter. The theoretical analysis is centered around an oracle inequality for the Lasso estimator, presented in Theorem~\ref{TheoremOracleInequality}. This result holds on the intersection of three specifically constructed sets: one related to the martingale component of the process ($\mathcal{T}$), another capturing the discretization error ($\mathcal{T}'$), and a third ($\mathcal{T}''$)
involving the model design through a compatibility condition. Among these, the set $\mathcal{T}'$, which accounts for the discretization bias, represents a central methodological contribution of our work, as it addresses a gap in the analysis of Lasso estimation for discretely observed diffusion processes.

We establish control over these sets using concentration inequalities developed in Propositions~\ref{ConcentrationInequality} and~\ref{PropCI_OU}. These results explicitly quantify the dependence on the discretization step, the model dimension, the number of parameters, and the sparsity level, thus providing a significant advancement in the theory for discretely observed systems. Under suitable asymptotic conditions, we show that the effect of discretization can become negligible, allowing the Lasso estimator to attain the same optimal convergence rates as in the ideal case of continuous observation. These findings are formally stated in Corollaries~\ref{CoroErrorBounds} and~\ref{CoroErrorBoundsOU}, corresponding to the general linear model and the Ornstein–Uhlenbeck process, respectively.

In addition, numerical experiments presented in Section~\ref{s: num} validate our theoretical results. They demonstrate that the Lasso estimator performs well in both $l_1$ and $l_2$ error norms and significantly outperforms the maximum likelihood estimator in terms of support recovery.

A natural and promising direction for future research is the extension of our framework to nonlinear drift functions. While the oracle inequality~\eqref{OracleIneq} and the structure of the sets $\mathcal{T}$, $\mathcal{T}'$ and $\mathcal{T}''$ 
remain formally valid, the main challenge lies in the probabilistic control of these sets. In contrast to linear models, the derivative of the drift in the nonlinear setting may depend explicitly on the parameter, requiring uniform bounds. This, in turn, calls for more sophisticated tools such as Talagrand’s generic chaining technique. Although such methods have been successfully employed in continuous-time settings (see, e.g. \cite{marklasso}), their adaptation to the discrete-time, high-dimensional regime remains an open and technically demanding problem.

Another interesting avenue for future work is the exploration of alternative matrix structures in high-dimensional OU processes. While our current analysis assumes a full-rank interaction matrix, it would be valuable to investigate the implications of relaxing Assumption~\ref{AOU} to allow for low-rank structures. In such cases, traditional methods may no longer be adequate, and estimators incorporating nuclear norm penalization could provide a more appropriate framework for capturing the underlying low-rank dynamics.


\section{Proof of main results}{\label{s: proof main}}
In this section, we provide the proof of our main results. 

\subsection{Proof of the    oracle inequality}
We will start by giving a detailed proof of Theorem $\ref{TheoremOracleInequality}$.
\begin{proof}\label{ProofOracleInequality}
    We decompose the contrast function $\eqref{contrastFunction}$ as 
    \begin{align*}
    R_T(\theta)=&\frac{1}{T}\sum_{i=1}^{n}\ \| \Delta X_i \|_2^2 
    + \frac{2}{n}\sum_{i=1}^{n} \bt(X_{t_{i-1}})^{\star} \Delta X_i
    + {\dn} \|\bt - \bto\|_D^2  \\
    &-\dn \|\bto\|_D^2 
    + 2\dn \langle \bt, \bto\rangle_D. 
    \end{align*}
    For any $\theta \in \Theta$, we can express the difference of $R_T(\w{\theta}_L) - R_T(\theta)$ as
    \begin{equation*}
      R_T(\w{\theta}_L) - R_T(\theta)=\dn \|\bth - \bto\|_D^2 -   \dn \|\bt - \bto\|_D^2
        + 2\left\langle \bth  - \bt  , \Delta X_i + \dn \bto\right\rangle_D.
    \end{equation*}
    By the  definition of $\w{\theta}_L$,  we have that $R_T(\w{\theta}_L) - R_T(\theta) \leq \lambda(\|\theta\|_1 - \|\w\theta_L\|_1)$. Therefore, for any  $\theta \in \Theta$, we obtain the following inequality:
    \begin{equation}\label{ineq1}
        \dn \|\bth - \bto\|_D^2 \leq   \dn \|\bt - \bto\|_D^2
        + 2\left\langle \bt - \bth  , \Delta X_i + \dn \bto\right\rangle_D 
        + \lambda\left(\|\theta\|_1 - \|\w\theta_L\|_1\right).
    \end{equation}
    Due to the identity $\Delta X_i = -\int_{t_{i-1}}^{t_i} \bto(X_s)ds + \Delta W_i$, the second term of the right side of inequality  $\eqref{ineq1}$ can be seen as
    \begin{align*}
       & \frac{2}{n} \sum_{i=1}^{n} \left(\bt(X_{t_{i-1}}) - \bth(X_{t_{i-1}}) \right)^{\star}  \left( \int_{t_{i-1}}^{t_i} \left(\bto(X_{t_{i-1}}) - \bto(X_s) \right)  ds \right) \\
       & + \frac{2}{n} \sum_{i=1}^{n} \left( \bt(X_{t_{i-1}}) - \bth(X_{t_{i-1}}) \right)^{\star} \Delta W_i .
    \end{align*}
Then, using the random functions that we have introduced in  $\eqref{rf1}$ and $\eqref{rf2}$,  inequality $\eqref{ineq1}$ can be expressed as
    \begin{equation*}
        \dn \|\bth - \bto\|_D^2 \leq   \dn \|\bt - \bto\|_D^2
        +  2A(\theta,\w\theta_L) + 2 G(\theta,\w\theta_L)
        + \lambda\left(\|\theta\|_1 - \|\w\theta_L\|_1\right).
    \end{equation*}
 Observe that the following inequalities hold true
    \begin{align*}
        |G(\theta,\w\theta_L)| 
        &\leq  \| \theta - \w\theta_L \|_1 \left\| \sup_{\theta' \in \Theta} \frac{1}{n} \sum_{i=1}^{n} b_{\theta'}(X_{t_{i-1}})^{\star}\Delta W_i\right\|_{\infty}
    \end{align*}
    and
    \begin{align*}
    |A(\theta,\w\theta_L)| 
    & \leq  \| \theta - \w\theta_L \|_1 \left\|   \sup_{\theta' \in \Theta} \frac{1}{n} \sum_{i=1}^{n}  b_{\theta'}(X_{t_{i-1}})^{\star} \left(\int_{t_{i-1}}^{t_i} \left( \bto(X_s) - \bto(X_{t_{i-1}})\right)ds\right)\right\|_{\infty}.
    \end{align*}
    Therefore, on $\mathcal{T} \cap \mathcal{T}'$, we obtain that 
    \begin{equation*}
         \dn \|\bth - \bto\|_D^2 \leq   \dn \|\bt - \bto\|_D^2
        + \lambda\left(\|\theta\|_1 - \|\w\theta_L\|_1 + \| \theta - \w\theta_L\|_1\right).
    \end{equation*}
    Let $\mathcal{S}(\theta)=\text{supp}(\theta)$. Then, 
    \begin{equation*}
        \|\theta\|_1 - \|\w\theta_L\|_1 + \| \theta - \w\theta_L\|_1 \leq  2\|\w\theta_L|_{\mathcal{S(\theta)}} - \theta\|_1
    \end{equation*}
    and hence
    \begin{equation}\label{ineq2}
           \dn \|\bth - \bto\|_D^2  + \lambda\| \w\theta_L- \theta\|_1\leq   \dn \|\bt - \bto\|_D^2
        + 4\lambda \|\w\theta_L|_{\mathcal{S(\theta)}} - \theta\|_1.
    \end{equation}
    Now, given  a constant $\gamma >0$, we will consider two scenarios.  If we are in the case where  $4\lambda \|\w\theta_L|_{\mathcal{S(\theta)}} - \theta\|_1 \leq \gamma \dn  \|\bt - \bto\|_D^2$, then, from $\eqref{ineq2}$, we obtain that $$\|\bth - \bto\|_D^2  \leq (1+\gamma)  \|\bt - \bto\|_D^2.$$
    If instead we have that  $4\lambda \|\w\theta_L|_{\mathcal{S(\theta)}} - \theta\|_1 > \gamma \dn  \|\bt - \bto\|_D^2$, we observe from $\eqref{ineq2}$ that 
    \begin{equation*}
        \lambda\| \w\theta_L- \theta\|_1\leq \left(4 + \frac{4}{\gamma}\right)\|\w\theta_L|_{\mathcal{S(\theta)}} - \theta\|_1,
    \end{equation*}
    which implies that $\w\theta_L - \theta \in \mathcal{C}(s,3 + 4/\gamma)$. Applying Cauchy--Schwarz inequality in \eqref{ineq2}, we obtain  that 
    \begin{equation}\label{ineqq1}
           \dn \|\bth - \bto\|_D^2  + \lambda\| \w\theta_L- \theta\|_1\leq   \dn \|\bt - \bto\|_D^2
        + 4\lambda \sqrt{s}\|\w\theta_L - \theta\|_2.
    \end{equation}
Now, on $\mathcal{T} \cap \mathcal{T}' \cap \mathcal{T}''$, we have that $\|\bth - \bto\|_D \geq k \| \w\theta_L- \theta\|_2$. Then, we can express  inequality \eqref{ineqq1} as
    \begin{equation}\label{ineqq2}
                \|\bth - \bto\|_D^2  \leq   \|\bt - \bto\|_D^2
        +\frac{ 4\lambda \sqrt{s}}{k \dn }\left(\| \bth - \bto\|_D + \| \bt - \bto\|_D\right).
    \end{equation}
    Using the inequality $2ab \leq \epsilon^{-1}a^2 + \epsilon b^2$ with $a=\lambda \sqrt{s}$,  $b=\| \bth - \bto\|_D$ and $b=\| \bt - \bto\|_D$, we can write \eqref{ineqq2} as
    \begin{equation*}
        \|\bth - \bto\|_D^2 \leq \frac{k\dn +2\epsilon}{k\dn - 2\epsilon}\|\bt - \bto\|_D^2 + \frac{4\lambda^2s}{\epsilon k \dn}\left(1 - \frac{2\epsilon}{k\dn}\right)^{-1}.
    \end{equation*}
    Finally, to match with the case in where  $4\lambda \|\w\theta_L|_{\mathcal{S(\theta)}} - \theta\|_1 \leq \gamma \dn  \|\bt - \bto\|_D^2$, we choose $\epsilon=k\gamma\dn/2(2+\gamma)$, and it leads to 
    \begin{equation*}
        \|\bth - \bto\|_D^2  \leq  (1 + \gamma)\|\bt - \bto\|_D^2 + \frac{4\lambda^2 s (2 + \gamma)^2}{k^2 \gamma \dn^2},
    \end{equation*}
    which completes the proof.
\end{proof}

\subsection{Controlling probabilities of the sets $\mathcal{T}, \mathcal{T}'$ and $\mathcal{T}''$ for the general linear drift}\label{sectionControllingProbabilities}
In this section, we proceed to detail the procedure for gaining control over the probabilities of the  sets $\mathcal{T}, \mathcal{T}'$ and $\mathcal{T}''$. 

\subsubsection{The martingale set $\mathcal{T}$}
We seek to obtain  a  bound for the probability of the set 
\begin{equation*}
     \mathcal{T} = \left\{ \left\|  \frac{1}{n} \sum_{i=1}^{n} \Phi(X_{t_{i-1}})^{\star}\Delta W_i\right\|_{\infty} \leq \frac{\lambda}{4}\right\}.
\end{equation*}
To prove our bound, we  start by defining the following univariate martingale 
\begin{equation}\label{mtgterm1}
    M_n^j = \frac{1}{\sqrt{n}}\sum_{i=1}^n \phi_j(X_{t_{i-1}})^{\star} \Delta W_i.
\end{equation}
To simplify notation, let us denote by $\Tilde{\phi_j} : \R^d \rightarrow \R$ the function  $\Tilde{\phi_j}(x) := \| \phi_j(x)\|_2^2$, and for $j=0,\dots,p, \hspace{0.1cm} i=1,\dots, n$ and $k=1,\dots,d$, we define the following quantities:
\begin{align}
    &H_{\dn} = \max_{j,k} \frac{32 \dn^2  \exp(4L\dn) \| \Tilde{\phi_j}^k \|_{\text{Lip}}^2}{(1-\exp(-M \dn))^2}, \label{quantitiesSetT}\\
     &G = \max_{j}\E_{\theta_0}\left[\|\phi_j\|_D^2\right], \label{G}\\
    &R := \max_{i,j,k}2C_j^2\left(1 + \E_{\theta_0}\left[|X^k_{t_{i-1}}|^2\right]\right) , \label{R}
\end{align}
  We point out that, using $R$, we can establish an upper bound for $G$ that tracks an explicit dependence on the dimension of the diffusion, $d$, since
\begin{align}{\label{eq: control G}}
    G &= \max_{j}\E_{\theta_0}\left[\frac{1}{n} \sum_{i=1}^n\|\phi_j(X_{t_{i-1}})\|_2^2\right] \leq \max_{i,j}\E_{\theta_0}\left[\|\phi_j(X_{t_{i-1}})\|_2^2\right] \\
    &\leq \max_{i,j}\E_{\theta_0}\left[C_j^2(1 + \|X_{t_{i-1}}\|_2)^2\right] \leq dR. \nonumber
\end{align}
It is important to note that, under our assumptions, one can easily verify that \(\max_{j,k} \| \tilde{\phi_j}^k \|_{\text{Lip}}^2\), \(\max_{j} C_j^2\), and \(\max_{i,k} \E_{\theta_0}[|X^k_{t_{i-1}}|^2]\) are uniformly bounded in all quantities of interest, namely \(p\), \(d\), \(n\), and \(T\).
Therefore, from \eqref{eq: control G}, we control $G$ through an explicit dependence on the dimension of the process and a finite quantity that does not depend on  $d,p,T$ and $\dn$.

\begin{remark} 
\normalfont
The quantities defined in $\eqref{quantitiesSetT} - \eqref{R}$ play a role in controlling the probability of set $\mathcal{T}$. However, it is important to point out that as $\dn \rightarrow 0$, $H_{\dn}$ converges to a constant. It implies  that there is no hidden dependence either in the discretization step or in the dimension of the process. 
\end{remark}

\noindent
The following result provides exponential bounds for the probability of  $\eqref{mtgterm1}$.
\begin{proposition}\label{propTmtg}
    Let us consider \cref{A1}-\cref{A4}. Then,  
        \begin{equation}\label{PboundMtg}
    \P_{\theta_0}(|M_n^j| > u) \leq 
     \begin{cases}
         2\exp{\left(-\frac{u^2}{32dR\dn }\right)} &\text{if}\quad 0 < u \leq 8\dn R^{3/2} \sqrt{\frac{n }{H_{\dn}}}, \\ 
        2\exp{\left(-\frac{3}{8d} \sqrt[3]{\frac{n u^4}{ H_{\dn} \dn} }    \right)}
        &\text{if}\quad u>8\dn R^{3/2} \sqrt{\frac{n }{H_{\dn}}}.\\
     \end{cases}
\end{equation}
\end{proposition}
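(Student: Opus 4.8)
The plan is to exploit the conditionally Gaussian structure of the increments of $M_n^j$ in \eqref{mtgterm1} and to combine a self-normalised (sub-Gaussian) martingale estimate with the concentration inequality of Proposition~\ref{ConcentrationInequality} applied to the quadratic variation. First I would record the martingale structure: writing $(\mathcal{F}_{t_i})$ for the filtration, each summand $\phi_j(X_{t_{i-1}})^{\star}\Delta W_i$ is, conditionally on $\mathcal{F}_{t_{i-1}}$, centred Gaussian with variance $\dn\|\phi_j(X_{t_{i-1}})\|_2^2$, since $\Delta W_i$ is independent of $\mathcal{F}_{t_{i-1}}$. Hence $M_n^j$ is a martingale whose predictable quadratic variation is $\langle M^j\rangle_n := \frac{\dn}{n}\sum_{i=1}^{n}\|\phi_j(X_{t_{i-1}})\|_2^2 = \dn\,\|\phi_j\|_D^2$, with $\tilde\phi_j := \|\phi_j\|_2^2$.

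The sub-Gaussian step is standard once the conditional moment generating function is known. For any $\eta>0$, conditional Gaussianity gives $\E\big[\exp\!\big(\tfrac{\eta}{\sqrt n}\phi_j(X_{t_{i-1}})^{\star}\Delta W_i\big)\mid\mathcal{F}_{t_{i-1}}\big]=\exp\!\big(\tfrac{\eta^2\dn}{2n}\tilde\phi_j(X_{t_{i-1}})\big)$, so $Z_n:=\exp\!\big(\eta M_n^j-\tfrac{\eta^2}{2}\langle M^j\rangle_n\big)$ is a martingale with $\E[Z_n]=1$. On the event $\{M_n^j>u,\ \langle M^j\rangle_n\le V\}$ one has $\eta M_n^j-\tfrac{\eta^2}{2}\langle M^j\rangle_n\ge \eta u-\tfrac{\eta^2}{2}V$, and Markov's inequality together with the optimal choice $\eta=u/V$ yields $\P_{\theta_0}(M_n^j>u,\ \langle M^j\rangle_n\le V)\le \exp(-u^2/(2V))$. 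The decomposition $\P_{\theta_0}(M_n^j>u)\le \P_{\theta_0}(M_n^j>u,\ \langle M^j\rangle_n\le V)+\P_{\theta_0}(\langle M^j\rangle_n>V)$ then reduces matters to choosing a threshold $V$ and controlling the event that the quadratic variation is large.

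Since $\langle M^j\rangle_n=\dn\|\phi_j\|_D^2$, I would rewrite $\{\langle M^j\rangle_n>V\}=\{\|\phi_j\|_D^2>V/\dn\}$ and estimate the deviation of $\|\phi_j\|_D^2=\frac1n\sum_i\tilde\phi_j(X_{t_{i-1}})$ from its mean $\E_{\theta_0}[\|\phi_j\|_D^2]\le G\le dR$, using \eqref{eq: control G}. The function $\tilde\phi_j=\langle\phi_j,\phi_j\rangle$ is Lipschitz by \cref{A4}, so Proposition~\ref{ConcentrationInequality} furnishes a bound of the form $\P_{\theta_0}\big(\|\phi_j\|_D^2-\E_{\theta_0}[\|\phi_j\|_D^2]>r\big)\le \exp\!\big(-\,r^2 n\dn/(2 d^2 H_{\dn})\big)$, where the constant $H_{\dn}$ and the power of $d$ are read off from \eqref{quantitiesSetT} (bounding the Lipschitz norm of $\tilde\phi_j$ through its components). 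Combining, $\P_{\theta_0}(\langle M^j\rangle_n>V)\le \exp\!\big(-(V/\dn-dR)^2 n\dn/(2 d^2 H_{\dn})\big)$.

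It then remains to choose $V=V(u)$, and the two regimes in \eqref{PboundMtg} come precisely from this choice. For small $u$ I would fix $V=16\,dR\,\dn$, so the sub-Gaussian term is exactly $\exp(-u^2/(32\,dR\,\dn))$ while the quadratic-variation term is no larger; this is legitimate as long as $u$ stays below the stated threshold $8\dn R^{3/2}\sqrt{nd/H_{\dn}}$. For large $u$ the balancing value of $V$ must grow with $u$: equating the two exponents leads to a cubic equation with solution $V/\dn\asymp \big(u^2 d^2 H_{\dn}/(n\dn^2)\big)^{1/3}$, and substituting back produces the stretched-exponential rate $\exp\!\big(-\tfrac38(nu^4/(d^2 H_{\dn}\dn))^{1/3}\big)$. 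The prefactor $2$ is recovered by applying the same argument to $-M_n^j$ and summing the two one-sided estimates. The main obstacle is exactly this balancing and the constant bookkeeping: one must carry the dependence on $d$, $\dn$ and $n$ through both $G\le dR$ in \eqref{eq: control G} and the definition \eqref{quantitiesSetT} of $H_{\dn}$ so that the crossover lands at the stated threshold and the constants $\tfrac1{32}$ and $\tfrac38$ come out right. The probabilistic ingredients themselves, namely the conditional Gaussianity of the increments and Proposition~\ref{ConcentrationInequality}, are comparatively routine.
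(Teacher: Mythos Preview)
Your argument is correct and uses the same two probabilistic ingredients as the paper—the exponential martingale identity for $M_n^j$ and the concentration inequality of Proposition~\ref{ConcentrationInequality} applied to $\|\phi_j\|_D^2$—but it organises them differently. The paper works entirely at the moment–generating–function level: via Cauchy--Schwarz it separates $\E_{\theta_0}[\exp(\mu M_n^j)]\le \E_{\theta_0}[\exp(2\mu^2\langle M^j\rangle_n)]^{1/2}$, centres the quadratic variation, and invokes the \emph{exponential} form of Proposition~\ref{ConcentrationInequality} to obtain a single bound $\E_{\theta_0}[\exp(\mu M_n^j)]\le\exp(\tfrac{\dn d^2 H_{\dn}}{n}\mu^4+\dn dR\,\mu^2)$; the two regimes in \eqref{PboundMtg} then arise from optimising this Chernoff bound over $\mu$ according to which of the $\mu^4$ and $\mu^2$ terms dominates. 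You instead split at the probability level, $\P(M_n^j>u)\le\P(M_n^j>u,\langle M^j\rangle_n\le V)+\P(\langle M^j\rangle_n>V)$, and apply the \emph{tail} form of Proposition~\ref{ConcentrationInequality} to the second piece; the two regimes then come from the choice of the cut-off $V$.

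Both routes are valid and yield the same rates. The paper's computation is slightly more streamlined for tracking constants, and the factors $\tfrac{1}{32}$ and $\tfrac{3}{8}$ drop out directly from the single optimisation in $\mu$. Your event-splitting makes the origin of the crossover more visible, but to land on the prefactor $2$ rather than $4$ (you pick up a sum of two terms per tail before doubling for $|M_n^j|$) you would need to be a bit tighter in the small-$u$ case, e.g.\ choose $V$ so the quadratic-variation tail is genuinely dominated; this is the ``constant bookkeeping'' you already flag as the main obstacle.
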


\begin{proof}

The quadratic variation of $M_n^j$, denoted by $\langle M^j\rangle_n$, is given by 
\begin{equation*}
    \langle M^j\rangle_n = \frac{\dn}{n} \sum_{i=1}^n \| \phi_j(X_{t_i}) \|_2^2  =\dn\|\phi_j\|_D^2  .
\end{equation*}
Recall that the  exponential martingale property claims that, for $a>0$,
\begin{equation*}
    \E_{}\left[\exp\left(aM_n^j - \frac{a^2}{2}\langle M^j\rangle_n\right)\right] =1.
\end{equation*}
Using it together with an application of Cauchy--Schwarz inequality, we obtain that 
\begin{align*}
    \E_{\theta_0}\left[\exp(\mu M_n^j)\right] &\leq \E_{\theta_0}\left[\exp{(\mu M_n^j - \mu^2\langle M^j\rangle_n)} \exp(\mu^2\langle M^j\rangle_n))\right]\\
    &\leq \E_{\theta_0}\left[\exp{(2(\mu M_n^j - \mu^2\langle M^j\rangle_n))}\right]^{\frac{1}{2}} \E_{\theta_0}\left[\exp(2\mu^2\langle M^j\rangle_n))\right]^{\frac{1}{2}}\\
    &= \E_{\theta_0}\left[\exp(2\mu^2\langle M^j\rangle_n))\right]^{\frac{1}{2}} = \E_{\theta_0}\left[\exp(2\mu^2 \dn\|\phi_j\|_D^2 )\right]^{\frac{1}{2}}.
\end{align*}
A straightforward calculation shows that 
\begin{align}\label{ineq24}
     \E_{\theta_0}\left[\exp(\mu M_n^j)\right]^2 & \leq \E_{\theta_0}\left[\exp{\left( 2 \mu^2 \dn (\|\phi_j\|_D^2 -\E_{\theta_0}\left[\|\phi_j\|_D^2\right])\right)}\exp{\left( 2\mu^2 \dn \E_{\theta_0}\left[\|\phi_j\|_D^2\right]\right)}\right] \nonumber \\
     &= \E_{\theta_0}\left[\exp{\left( 2 \mu^2 \dn (\|\phi_j\|_D^2 -\E_{\theta_0}\left[\|\phi_j\|_D^2\right])\right)}\right]\exp{\left( 2\mu^2 \dn \E_{\theta_0}\left[\|\phi_j\|_D^2\right]\right)} \nonumber \\
     &= \E_{\theta_0}\left[\exp{\left( \frac{2 \mu^2 \dn}{n}\sum_{i=1}^n(\|\phi_j(X_{t_{i-1}})\|_2^2 -\E_{\theta_0}\left[\|\phi_j(X_{t_{i-1}})\|_2^2\right])\right)}\right]\nonumber \\ &\times \exp{\left( 2\mu^2 \dn \E_{\theta_0}\left[\|\phi_j(X_0)\|_2^2\right]\right)}.
     \end{align}
Applying the concentration inequality obtained in Proposition $\ref{ConcentrationInequality}$ to bound $\eqref{ineq24}$, and using the quantities defined in \eqref{quantitiesSetT}, we get

\begin{equation}\label{expBoundMartingale}
    \E_{\theta_0}\left[ \exp(\mu M_n^j)\right]  \leq \exp\left(\frac{\dn d^3 H_{\dn}}{n}\mu^4 + \dn dR\mu^2\right).
\end{equation}
Now,  we are going to obtain a bound for $\P_{\theta_0}(M_n^j > x)$.
Using Markov's inequality and $\eqref{expBoundMartingale}$, for all $\mu >0$, we obtain that
\begin{equation}\label{ineq5}
    \P_{\theta_0}(M_n^j > u) = \P_{\theta_0}(\exp \left(\mu M_n^j\right) > \exp \left(\mu u\right))\leq \exp\left(\frac{\dn d^3 H_{\dn}}{n}\mu^4 + \dn d R \mu^2 - \mu u\right).
\end{equation}
If $0< \mu \leq \sqrt{n R/(d^2 H_{\dn})}$, the minimum of the right side of the inequality $\eqref{ineq5}$ is attained for $\mu^{\star} = u/(4 \dn d R)$.  Therefore,
\begin{equation}\label{b1}
      \P_{\theta_0}(M_n^j > u) \leq  \exp\left(2\dn d R \mu^2 - \mu u\right) \leq \exp{\left(-\frac{u^2}{8dR\dn }\right)} \hspace{0.1cm} 
\end{equation}
 if $0< u \leq 4\dn R^{3/2} \sqrt{n/H_{\dn}}.$
Following the same procedure, in case $ \mu \geq \sqrt{n R/(d^2 H_{\dn})}$, the minimum of the right side of $\eqref{ineq5}$ is attained for $\mu^{\star} =\left(n u/(8 d^3 H_{\dn} \dn)\right)^{1/3}$, and consequently,
\begin{equation}\label{b2}
\P_{\theta_0}(M_n^j > u) \leq \exp\left(\frac{2 \dn d^3 H_{\dn}}{n}\mu^4  - \mu u\right) \leq  \exp{\left(-\frac{3}{8} \sqrt[3]{\frac{ n u^4}{d^3 H_{\dn} \dn} }    \right)}  
\end{equation}
if $ u > 8\dn R^{3/2} \sqrt{{n }/{H_{\dn}}}$. 
Finally, for the case when  $u$ is between $ 4\dn R^{3/2} \sqrt{n /H_{\dn}}$ and $8\dn R^{3/2} \sqrt{n /H_{\dn}}$, we rescale inequality $\eqref{b1}$  by $u = u/2$. It leads to  
    \begin{equation}
    \P_{\theta_0}(|M_n^j| > u) \leq 
     \begin{cases}
         2\exp{\left(-\frac{u^2}{32dR\dn }\right)} &\text{if}\quad 0 < u \leq 8\dn R^{3/2} \sqrt{\frac{n }{H_{\dn}}}, \\ 
        2\exp{\left(-\frac{3}{8} \sqrt[3]{\frac{n u^4}{d^3 H_{\dn} \dn} }    \right)}
        &\text{if}\quad u>8\dn R^{3/2} \sqrt{\frac{n }{H_{\dn}}},\\
     \end{cases}
\end{equation}
that proves our desired result.
\end{proof}

\noindent
Now, we will use the concentration inequality in $\eqref{PboundMtg}$ to control the probability of $\mathcal{T}$.
\begin{theorem}\label{TheoremT} Under \cref{A1}-\cref{A4}, for a fixed value of $\epsilon \in (0,1)$ and for all $\lambda > \lambda_1$, it holds that 
    \begin{equation*}
        \P_{\theta_0}(\mathcal{T}) \geq 1-\epsilon.
    \end{equation*}
\end{theorem}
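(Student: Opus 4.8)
The plan is to reduce the event $\mathcal{T}$ to a maximal deviation of the univariate martingales $M_n^j$ of \eqref{mtgterm1} and then combine Proposition \ref{propTmtg} with a union bound. First I would observe that, since $\dot{b}_{\theta'} = \Phi$ does not depend on $\theta'$ in the linear model, the $j$-th coordinate of $\frac{1}{n}\sum_{i=1}^n \Phi(X_{t_{i-1}})^{\star}\Delta W_i$ is exactly $\frac{1}{\sqrt{n}}M_n^j$. Consequently
\[
\mathcal{T}^c = \left\{ \max_{1\le j\le p} |M_n^j| > \frac{\lambda\sqrt{n}}{4}\right\},
\]
and a union bound gives $\P_{\theta_0}(\mathcal{T}^c)\le \sum_{j=1}^p \P_{\theta_0}(|M_n^j| > u)$ with $u := \lambda\sqrt{n}/4$. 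It therefore suffices to show that each summand is at most $\epsilon/p$, up to the harmless factor $2$ appearing in the tail bound.

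Next I would feed $u = \lambda\sqrt{n}/4$ into the two-regime tail estimate of Proposition \ref{propTmtg}. For fixed $\lambda$ the value of $u$ falls into exactly one of the two regimes, according to whether $u \le 8\dn R^{3/2}\sqrt{nd/H_{\dn}}$ or $u > 8\dn R^{3/2}\sqrt{nd/H_{\dn}}$; since a priori we do not know which, we must guard against both, and this is precisely why the threshold is taken as $\lambda_1 = \max\{\lambda_{1,1},\lambda_{1,2}\}$. In the sub-Gaussian regime the relevant bound is $2\exp(-u^2/(32 dR\dn))$; substituting $u^2 = \lambda^2 n/16$ and imposing $p\cdot 2\exp(-u^2/(32dR\dn))\le\epsilon$ leads to the condition $\lambda^2 \gtrsim \frac{dR\dn}{n}\bigl(\ln(2p)+\ln(1/\epsilon)\bigr)$, and the numerical constant $23$ together with the logarithmic term $\ln(2p)+\ln(2/\epsilon)$ in $\lambda_{1,1}$ is chosen exactly so that $\lambda>\lambda_{1,1}$ forces the exponent to dominate $\ln(2p/\epsilon)$, making the contribution of this regime $\le\epsilon$.

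In the heavy-tail regime I would proceed identically with the bound $2\exp\bigl(-\tfrac{3}{8}(nu^4/(d^2 H_{\dn}\dn))^{1/3}\bigr)$: inserting $u^4 = \lambda^4 n^2/256$ turns the exponent into a constant multiple of $\lambda^{4/3}\,n\,(d^2 H_{\dn}\dn)^{-1/3}$, and requiring $p\cdot 2\exp(\cdot)\le\epsilon$ yields a condition of the form $\lambda \gtrsim (d^2 H_{\dn}\dn/n^3)^{1/4}\bigl(\ln(2p)+\ln(1/\epsilon)\bigr)^{3/4}$, which is precisely the scaling of $\lambda_{1,2}$; the explicit constant there is calibrated to absorb the numerical factors $256^{1/4}$ and $(8/3)^{3/4}$ arising from the optimization over $\mu$ in Proposition \ref{propTmtg}. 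Since $\lambda>\lambda_1$ exceeds both $\lambda_{1,1}$ and $\lambda_{1,2}$, whichever regime $u$ lands in the corresponding summand is controlled, whence $\P_{\theta_0}(\mathcal{T}^c)\le\epsilon$ and therefore $\P_{\theta_0}(\mathcal{T})\ge 1-\epsilon$.

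The conceptual content — reducing $\mathcal{T}$ to $\max_j|M_n^j|$, a union bound over the $p$ coordinates, and the already-established exponential inequality — is straightforward, so I expect no essential obstacle. The only delicate part is the bookkeeping of the two regimes and the verification that the stated constants indeed dominate the $\ln(2p/\epsilon)$ threshold in each case, i.e.\ pure constant-chasing rather than any new idea.
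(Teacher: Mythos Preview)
Your proposal is correct and follows essentially the same route as the paper: reduce $\mathcal{T}^c$ to $\max_j |M_n^j|>\sqrt{n}\lambda/4$, apply a union bound over the $p$ coordinates, invoke the two-regime tail bound of Proposition~\ref{propTmtg}, and verify that $\lambda>\lambda_1=\max\{\lambda_{1,1},\lambda_{1,2}\}$ handles whichever regime applies. Your discussion of the constant bookkeeping is in fact slightly more explicit than the paper's own proof, which simply writes down the two exponential bounds and reads off $\lambda_1$.
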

\begin{proof}
    
The proof is based on the result of Proposition $\ref{propTmtg}$. We have that
\begin{align*}
\P_{\theta_0}\left(\mathcal{T}^C\right) &= \P_{\theta_0}\left(  \left\|  \frac{1}{n} \sum_{i=1}^{n} \Phi(X_{t_{i-1}})^{\star}\Delta W_i\right\|_{\infty} > \frac{\lambda}{4} \right)  \\
    & \leq p \max_{j}\P_{\theta_0} \left(  \left|   \frac{1}{n} \sum_{i=1}^{n} \phi_j(X_{t_{i-1}})^{\star}\Delta W_i \right| > \frac{\lambda}{4} \right)\\
    & \leq  p \max_{j} \P_{\theta_0} \left( |M_n^j| \geq \frac{\sqrt{n}\lambda}{4}\right).
\end{align*}
Now, if $\sqrt{n}\lambda/4 \leq 8\dn R^{3/2} \sqrt{n /H_{\dn}}, $ using inequality $\eqref{PboundMtg}$, we obtain that 
\begin{equation*}
\P_{\theta_0}\left(\mathcal{T}^C\right) \leq 2p\exp{\left(-\frac{n \lambda^2}{512 d R \dn}\right)} .
\end{equation*}
Instead, if $\sqrt{n}\lambda/4 > 8\dn R^{3/2} \sqrt{n /H_{\dn}},$
we have that 
\begin{equation*}
\P_{\theta_0}\left(\mathcal{T}^C\right) \leq 2p\exp{\left(-\frac{3}{32d}\left(\frac{n^3 \lambda^4}{4 H_{\dn} \dn}\right)^{\frac{1}{3}}\right)}.
\end{equation*}
Therefore, if we define 
\begin{equation*}
    \lambda_1 = \max \left\{ 23 \sqrt{\frac{d R \dn}{n} \left[\ln{(2p)} +\ln{\left( \frac{2}{\epsilon}\right)} \right]}, 7 \sqrt[4]{\frac{d^3 H_{\dn} \dn}{n^3} \left[\ln{(2p)} +\ln{\left(\frac{2}{\epsilon}\right)} \right]^{3}}\right\},
\end{equation*}
for a fix value of $\epsilon \in (0,1)$, and for all  $\lambda \geq \lambda_1$, it implies that 
\begin{equation*}
      \P_{\theta_0}(\mathcal{T}) \geq 1-\epsilon,
\end{equation*}
obtaining the desired result.
\end{proof}
\subsubsection{The approximation set $\mathcal{T}'$}
In this subsection, we present the study to control the probability of the set $\mathcal{T}'$, where we consider the error term that comes from  the discretization. As the drift function is of the form $\eqref{linearModel},$ the set $\mathcal{T}'$ defined in $\eqref{settaup}$ can be written as
\begin{equation*}
     \mathcal{T}' = \left\{ \left\| \frac{1}{n} \sum_{i=1}^{n} \Phi(X_{t_{i-1}})^{\star} \left( \sum_{k=0}^p \theta_0^k\int_{t_{i-1}}^{t_i} \left(\phi_k(X_s) - \phi_k(X_{t_{i-1}})\right)ds\right)\right\|_{\infty} \leq \frac{\lambda}{4}\right\},
\end{equation*}
where $\theta_0^k$ denotes the $k$th component of the parameter $\theta_0$, setting $\theta_0^0 = 1$.
In order to control the probability of $\mathcal{T}'$, for each $j \in \{1,\dots,p\}$ we define the functional $F_j : \left(\R^d\right)^n \rightarrow \R$ as 
\begin{equation}\label{functionalF}
    F_j = \frac{1}{\sqrt{n}} \sum_{i=1}^{n} \phi_{j}(X_{t_{i-1}})^{\star} \left( \sum_{k=0}^p \theta_0^k\int_{t_{i-1}}^{t_i} \left(\phi_k(X_s) - \phi_k(X_{t_{i-1}})\right)ds\right).
\end{equation}
Our first result provides a bound for the tail probability of $\eqref{functionalF}$.
\begin{proposition}\label{propTapprox}
    Let us consider \cref{A1}-\cref{A4}. Then, there exists a constant $C_b>0$ independent from \(p\), \(d\), \(n\) and \(T\) such that
 \begin{equation}\label{boundPt2}
    \P_{\theta_0}\left(|F_j| \geq u \right) \leq \exp\left(-\frac{u^2}{4\exp(2)C_b {n}s^2 d^2 \dn^{3}}\right).
\end{equation}
\end{proposition}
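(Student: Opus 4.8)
The plan is to establish the sub-Gaussian tail for $F_j$ by reducing it, via a martingale decomposition over the sampling intervals, to an exponential-moment estimate that can be fed into the concentration inequality of Proposition \ref{ConcentrationInequality}, exactly in the spirit of the proof of Proposition \ref{propTmtg}. Writing $F_j = \frac{1}{\sqrt n}\sum_{i=1}^n \phi_j(X_{t_{i-1}})^\star\xi_i$ with $\xi_i := \sum_{k=0}^p\theta_0^k\int_{t_{i-1}}^{t_i}(\phi_k(X_s)-\phi_k(X_{t_{i-1}}))\,ds$, I would first split $\xi_i$ into its $\mathcal{F}_{t_{i-1}}$-conditional mean $\bar\xi_i$ and the centered remainder $\tilde\xi_i=\xi_i-\bar\xi_i$, so that $F_j = F_j^{\mathrm{mart}}+F_j^{\mathrm{drift}}$. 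The predictable part $F_j^{\mathrm{drift}}$ is the genuine discretization bias: since $\E[X_s-X_{t_{i-1}}\mid\mathcal{F}_{t_{i-1}}]$ is of order $(s-t_{i-1})$, each $\bar\xi_i$ is of order $\dn^2$, and this term can be bounded deterministically after controlling the relevant moments of $X$, contributing at a strictly smaller order than the target scale and hence absorbable below the threshold $u$.

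The core of the argument is the centered part $F_j^{\mathrm{mart}}$, a sum of martingale differences adapted to $(\mathcal{F}_{t_i})$. The key quantitative input is that, because $\phi_k$ is merely Lipschitz (\cref{A1}) so that no It\^o expansion is available, the increment must be controlled purely through $\|\phi_k(X_s)-\phi_k(X_{t_{i-1}})\|_2\le L_k\|X_s-X_{t_{i-1}}\|_2$ and the sub-Gaussian behaviour of the path increment, whose conditional second moment is of order $d\,(s-t_{i-1})$. Integrating over a step of length $\dn$ produces the scale $\dn^{3/2}$ per increment, while the sum over $k$ collapses to the $s$ active coordinates of $\theta_0$ by the sparsity constraint \eqref{sparsityConstraint}, yielding $\E[\|\tilde\xi_i\|_2^2\mid\mathcal{F}_{t_{i-1}}]\lesssim s^2 d\,\dn^3$. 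I would then control the conditional exponential moment $\E[\exp(\eta\,\phi_j(X_{t_{i-1}})^\star\tilde\xi_i)\mid\mathcal{F}_{t_{i-1}}]$ and, using the tower property step by step, reduce the global exponential moment of $F_j$ to an exponential moment of an additive functional of the discrete skeleton such as $\frac1n\sum_i\|\phi_j(X_{t_{i-1}})\|_2^2$. This last quantity is exactly of the type handled by Proposition \ref{ConcentrationInequality}, and its contribution is absorbed, together with the uniformly bounded moments of $X$, the constants $L_k$ and $\max_k\theta_0^k$, and the factor $\exp(4L\dn)/(1-\exp(-M\dn))^2$, into the model constant $C_b$ and the numerical factor $\exp(2)$, producing the stated scale $n\,s^2 d^2\dn^3$.

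With an exponential-moment bound of the form $\E[\exp(\eta F_j)]\le\exp\!\big(c\,\eta^2\, n\, s^2 d^2\dn^3\big)$ in hand, a standard Chernoff optimization over $\eta>0$, together with symmetry for the lower tail, yields the desired estimate \eqref{boundPt2}. The normalization $1/\sqrt n$ in $F_j$ and the per-step scale $\dn^{3}$ combine precisely so that the resulting threshold for $\mathcal{T}'$ matches the constant $\lambda_2$ used downstream.

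The step I expect to be the main obstacle is the quantitative control of the discretization increment under only Lipschitz regularity of the $\phi_k$: one must extract the correct $\dn^{3/2}$ scale and the correct powers of $s$ and $d$ from $\int_{t_{i-1}}^{t_i}(\phi_k(X_s)-\phi_k(X_{t_{i-1}}))\,ds$ while avoiding any $\exp(LT)$ blow-up in the horizon. This is where the dissipativity assumption \cref{A2}, through the constant $M$, is essential, as it supplies the contraction that keeps all constants uniform in $T$, exactly as in the proof of Proposition \ref{ConcentrationInequality}. It is also what forces the reduction to be carried out conditionally, one sampling interval at a time, rather than by treating $F_j$ as a single Lipschitz functional of the whole trajectory, which fails to be globally Lipschitz because of the products $\phi_j(X_{t_{i-1}})^\star\xi_i$ with the unbounded prefactor $\phi_j(X_{t_{i-1}})$.
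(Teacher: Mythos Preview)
Your route is quite different from the paper's, and it has a real gap.

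The paper does \emph{not} use a martingale decomposition or Proposition~\ref{ConcentrationInequality} at all. Instead it bounds the $r$-th moments of $F_j$ directly: write $F_j = \frac{1}{\sqrt n}\sum_i F_j^i$, apply Cauchy--Schwarz to split $\phi_j(X_{t_{i-1}})$ from the integral, use the Lipschitz property of $\phi_k$ together with Jensen, and then control $\E_{\theta_0}[\|X_s-X_{t_{i-1}}\|_2^{2r}]$ from the SDE dynamics (the Brownian part contributes $d^r\dn^r\,2^{r+1}\Gamma(r+1)$, which dominates the drift part). This yields $\E_{\theta_0}[|F_j|^r]\le (2\sqrt{n}\,s\,d\,\dn^{3/2})^r C_b^{r/2} r^{r/2}$, i.e.\ sub-Gaussian moment growth, and a single Markov step with $f(x)=x^r$ and the optimal $r$ gives \eqref{boundPt2}. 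No conditioning, no tower property, no additive functional.

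Your approach has two problems. First, the predictable piece $F_j^{\mathrm{drift}}$ built from $\bar\xi_i=\E[\xi_i\mid\mathcal{F}_{t_{i-1}}]$ is \emph{not} deterministically bounded: $\bar\xi_i$ is of order $\dn^2$ times $b_{\theta_0}(X_{t_{i-1}})$, which is unbounded, and $\phi_j(X_{t_{i-1}})^\star\bar\xi_i$ is a genuinely random, unbounded contribution that needs its own tail control, not just ``absorption below the threshold $u$''. Second, and more importantly, the roadmap you describe---reduce the conditional Laplace transform of $F_j^{\mathrm{mart}}$ to an exponential of $\frac{1}{n}\sum_i\|\phi_j(X_{t_{i-1}})\|_2^2$ and then apply Proposition~\ref{ConcentrationInequality}---is exactly the mechanism of Proposition~\ref{propTmtg}, and there it produces a \emph{two-term} exponent with both $\eta^2$ and $\eta^4$ contributions (see \eqref{expBoundMartingale}), hence a two-regime tail, not the pure sub-Gaussian Laplace bound $\E[\exp(\eta F_j)]\le\exp(c\,\eta^2 n s^2 d^2 \dn^3)$ that you claim at the end. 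The random variance proxy $\|\phi_j(X_{t_{i-1}})\|_2^2$ is precisely what forces the extra $\eta^4$ term, and you give no argument for why it disappears here.

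The paper's moment method sidesteps all of this: Cauchy--Schwarz decouples the unbounded prefactor $\phi_j(X_{t_{i-1}})$ from $\xi_i$, and the $r^{r/2}$ growth comes cleanly from the Gaussian moments of the Brownian increment, giving the pure sub-Gaussian tail \eqref{boundPt2} without any conditioning or concentration inequality.
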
  
\begin{proof}

 To simplify the calculations, for each $t_i$, we can express functional $\eqref{functionalF}$ as $F_j = \frac{1}{\sqrt{n}}\sum_{i=1}^n F_j^i$, where 
\begin{equation*}
    F_j^i =  \phi_j(X_{t_{i-1}})^{\star} \left( \sum_{k=0}^p \theta_0^k\int_{t_{i-1}}^{t_i} \left(\phi_k(X_s) - \phi_k(X_{t_{i-1}})\right)ds\right).
\end{equation*}
We seek to find an upper bound for $\E_{\theta_0}[|F_j^i|^r]$, for $r\geq 1$. We observe that, for any  $i \in \{1,\dots,n\}$,  Cauchy--Schwarz inequality implies, 
\begin{align*}
\E_{\theta_0}\left[\left|F_j^i\right|^r\right] &= \E_{\theta_0}\left[\left|  \phi_j(X_{t_{i-1}})^{\star}\left( \sum_{k=0}^p \theta_0^k\int_{t_{i-1}}^{t_i} \left(\phi_k(X_s) - \phi_k(X_{t_{i-1}})\right)ds \right)\right|^r\right] \\
    &\leq \E_{\theta_0}\left[ \left\| \phi_j(X_{t_{i-1}})\right\|_2^{2r} \right]^{\frac{1}{2}} \E_{\theta_0}\left[ \left\| \sum_{k=0}^p \theta_0^k \int_{t_{i-1}}^{t_i} \left(\phi_k(X_s) - \phi_k(X_{t_{i-1}})\right)ds \right\|_2^{2r} \right]^{\frac{1}{2}}.
\end{align*}
Since the $\phi_j$'s have linear growth, for all $r \geq 1$, $j=0,\dots,p; i=1,\dots,n; k=1,\dots,d$, we can bound $\E_{\theta_0}[ \left\| \phi_j(X_{t_{i-1}})\right\|_2^{2r}]$ following a similar reasoning as in \eqref{R}. More precisely, if we define 
\begin{equation*}
    J_1(r) = 2^{r-1} \max_{i,j,k} \sup_{s \in [t_{i-1}, t_i]}C_j^{r}\left(1 + \E_{\theta_0}\left[|X_{s}^k|^{r}\right]\right),
\end{equation*}
we obtain that $\E_{\theta_0}[ \| \phi_j(X_{t_{i-1}})\|_2^{2r} ]^{1/2}  \leq d^{{r}/{2}}J_1(2r)^{1/2}$. Using \cref{A1} and applying Jensen's inequality,  for all $j\in \{1,\dots,p\}$ we have that 
\begin{align*}
\E_{\theta_0}\left[\left|F_j^i\right|^r\right] &\leq  d^{r/2}J_1(2r)^{1/2}\E_{\theta_0}\left[ \left\| s \max_{k} \theta_0^k\int_{t_{i-1}}^{t_i} \left(\phi_k(X_s) - \phi_k(X_{t_{i-1}})\right)ds \right\|_2^{2r} \right]^{\frac{1}{2}}\\
    &\leq   d^{r/2} J_1(2r)^{1/2} s^r \dn^{\frac{2r-1}{2}} \max_{k} \left|\theta_0^k\right|^r \E_{\theta_0}\left[  \int_{t_{i-1}}^{t_i} \left\|\phi_k(X_s) - \phi_k(X_{t_{i-1}})\right\|_2^{2r}ds  \right]^{\frac{1}{2}}\\
    & \leq  d^{r/2} J_1(2r)^{1/2} s^r \dn^{\frac{2r-1}{2}} \max_{k} \left|\theta_0^k L_k\right|^r
    \left(\int_{t_{i-1}}^{t_i} \E_{\theta_0}\left[ \left\|X_s - X_{t_{i-1}}\right\|_2^{2r} \right] ds\right)^{\frac{1}{2}}.
\end{align*}
Now we proceed to bound  $\E_{\theta_0}[ \left\|X_s - X_{t_{i-1}}\right\|_2^{2r} ].$ 
By the dynamics of the process and using the inequality $|a+b|^p \leq 2^{p-1}(|a|^p + |b|^p)$ for $p\geq 1$, we obtain that 
\begin{align}\label{ineq6}
    \E_{\theta_0}\left[ \left\|X_s - X_{t_{i-1}}\right\|_2^{2r} \right] &= \E_{\theta_0}\left[ \left\| -\int_{t_{i-1}}^{s}  \bto(X_u)du + \int_{t_{i-1}}^{s} dW_u\right\|_2^{2r}\right] \nonumber \\
    &\leq 2^{2r-1} \E_{\theta_0}\left[ \left\| \int_{t_{i-1}}^{s} \bto(X_u)du\right\|_2^{2r} \right]  + 2^{2r-1} \E_{\theta_0}\left[ \left\|\int_{t_{i-1}}^{s} dW_u\right\|_2^{2r}  \right].
\end{align}
Again, thanks to the linear growth of  the $\phi_j$'s functions, the first term of the right side of the inequality \eqref{ineq6} can be bounded by
\begin{align}\label{bound1}
    \E_{\theta_0}\left[ \left\| \int_{t_{i-1}}^{s}  \bto(X_u)du\right\|_2^{2r} \right] &\leq \dn^{2r-1} \int_{t_{i-1}}^{s} \E_{\theta_0}\left[\left\|   \sum_{k=0}^p\theta_0^k \phi_k(X_u)\right\|_2^{2r} \right] du  \nonumber \\ 
    &\leq \dn^{2r-1}s^{2r}  \max_{k }\left|\theta_0^k\right|^{2r} \int_{t_{i-1}}^{s} \E_{\theta_0}\left[\left\|   \phi_k(X_u)\right\|_2^{2r} \right] du \nonumber \\ 
    &\leq \dn^{2r} d^{r} s^{2r} \max_{k }\left|\theta_0^k\right|^{2r} J_1(2r).
\end{align}
For the second term of the right hand of $\eqref{ineq6}$, using that  $\E[|X|] = \int_{0}^\infty \P(|X| \geq x) dx$ and applying the change of variable $x = z^{2r}$, we obtain that 
\begin{align*}
    \E_{\theta_0}\left[ \left\|\int_{t_{i-1}}^{s} dW_u\right\|_2^{2r} \right] &\leq 2r d^r \int_{0}^\infty \P(|Z| \geq z) z^{2r-1}dz \\
    &\leq 4rd^r \int_{t_{i-1}}^{s} \exp\left(-\frac{z^2}{2(s-t_{i-1})}\right)z^{2r-1}dz,
\end{align*}
where $Z$ is a centred Gaussian 1-dimensional random variable with variance $s- t_{i-1}$. Finally, if we do the change of variable $\tilde{z} = z^2/(2(s-t_{i-1}))$, we obtain that 
\begin{equation}\label{bound2}
    \E_{\theta_0}\left[ \left\|\int_{t_{i-1}}^{s} dW_u\right\|_2^{2r} \right]  \leq  d^r  \dn^r J_2(r), 
\end{equation}
where 
\begin{equation}\label{J2}
J_2(r) := 2^{r+1} \Gamma(r+1)
\end{equation}
for all $r\geq 1$, and $\Gamma(r) = \int_0^\infty \exp(-s)s^{r-1}ds$ denotes the gamma function. Using the fact that for $\dn \rightarrow 0$, the term $\eqref{bound1}$ is negligible with respect to the term $\eqref{bound2}$, we have that
\begin{align*}  
\E_{\theta_0}\left[\left|F_j^i\right|^r\right] &\leq 2^r d^{r/2} J_1(2r)^{1/2} s^r \dn^{\frac{2r-1}{2}} \max_{k} \left|\theta_0^k L_k\right|^r \left(\int_{t_{i-1}}^{s} d^r \dn^r J_2(r) ds\right)^{\frac{1}{2}} \\
    & \leq 2^{r}  d^{r} s^r \dn^{\frac{3r}{2}} \max_{k} \left|\theta_0^k L_k\right|^r (J_1(2r)J_2(r))^{1/2}
     \\
    & \leq \left(2ds \dn^{3/2}\right)^r J(r)^{1/2},
\end{align*}
where $J(r) = \max_{k} \left|\theta_0^k L_k\right|^{2r} J_1(2r)J_2(r).$
To obtain a bound for $\E[|F_j|^r]$, using Jensen inequality, it leads to
\begin{equation}\label{boundEofFjLinearCase}
    \E_{\theta_0}\left[|F_j|^r\right] \leq \E_{\theta_0}\left[\left|\frac{1}{\sqrt{n}}\sum_{i=1}^n F_j^i\right|^r\right] \leq 
    \frac{1}{\sqrt{n^r}} n^{r-1}\sum_{i=1}^n \E_{\theta_0}\left[\left| F_j^i \right|^r\right]  \leq (2\sqrt{n}sd \dn^{3/2})^r J(r)^{1/2}.
\end{equation}
Due to the fact that $\theta_0$, $L_k$ and the $r$-th moments of the ergodic process are uniformly bounded with respect to the parameters of the model for all $r \geq 1$ (see Remark \ref{remarkBoundedUnif}), we obtain that $J(r) \leq C^rJ_2(r)$. Moreover, by Stirling approximation, we can establish a control over the behaviour of the gamma function for all $r \geq 1$, to conclude that 
\begin{equation}\label{boundFj}
    \E_{\theta_0}\left[|F_j|^r\right] \leq (2\sqrt{n}sd \dn^{3/2})^r C_b^{r/2}r^{r/2},
\end{equation}
for a constant $C_b$ independent from $p,d,n$ and $T$. Finally, as the $r$-th moments of $F_j$ are controlled by \eqref{boundFj}, using Markov's inequality with $f(x) = x^r$, we conclude that
\begin{equation*}
    \P_{\theta_0}\left(|F_j| \geq 2eC_b^{1/2}\sqrt{n}s d \dn^{3/2}r\right) \leq \exp(-r^2).
\end{equation*}
From here we deduce the desired result.
\end{proof}

\noindent
The next theorem, provides a bound for the probability of $\mathcal{T}'.$

\begin{theorem}\label{TheoremTP}
    Under \cref{A1}-\cref{A4},  for a fixed value of $\epsilon \in (0,1)$ and for all $\lambda > \lambda_2,$ it holds that 
    \begin{equation*}
        \P_{\theta_0}(\mathcal{T}') \geq 1-\epsilon.
    \end{equation*}
\end{theorem}
\begin{proof}
\begin{align*}
    \P_{\theta_0}\left(\mathcal{T}'^C\right) &= \P_{\theta_0}\left(  \left\|  \frac{1}{n} \sum_{i=1}^{n} \Phi(X_{t_{i-1}})^{\star} \left( \sum_{k=0}^p \theta_0^k\int_{t_{i-1}}^{t_i} \left(\phi_k(X_s) - \phi_k(X_{t_{i-1}})\right)ds\right)\right\|_{\infty} \geq \frac{\lambda}{4} \right)  \\
    & \leq p \max_{j}\P_{\theta_0} \left(   \frac{1}{n}\left| \sum_{i=1}^{n} \phi_j(X_{t_{i-1}})^{\star} \left( \sum_{k=0}^p \theta_0^k\int_{t_{i-1}}^{t_i} \left(\phi_k(X_s) - \phi_k(X_{t_{i-1}})\right)ds\right)\right| \geq \frac{\lambda}{4} \right)\\
    &\leq  p \max_{j} \P_{\theta_0} \left(|F_j| \geq \frac{\sqrt{n}\lambda}{4}\right).
\end{align*}
Now, using Proposition \ref{propTapprox}, we obtain that
\begin{equation*}
\P_{\theta_0}\left(\mathcal{T}'^C\right) \leq p\exp\left(- \frac{\lambda^2}{64\exp(2)C_b s^2 d^2 \dn^3}\right).
\end{equation*}
Therefore, if we define 
\begin{equation*}
    \lambda_2 = 8eC_b^{1/2}sd\dn^{3/2}\sqrt{\ln(p) + \ln\left(\frac{1}{\epsilon}\right)},
\end{equation*}
for a fix value of $\epsilon \in (0,1)$, and for all  $\lambda \geq \lambda_2$, it implies that 
\begin{equation*}
      \P_{\theta_0}(\mathcal{T}') \geq 1-\epsilon,
\end{equation*}
obtaining the desired result.
\end{proof}

\subsubsection{The $\mathcal{T}''$ set}

First, for $u \in \R^p$ such that $\|u\|_2 =1 $, we define the following random function 
\begin{equation}\label{randomFunction}
    H_u(\theta) = u^{\star} \left(\frac{1}{n}\sum_{i=1}^n \dot{\bt}(X_{t_{i-1}})^{\star}\dot{\bt}(X_{t_{i-1}}) - \E[\dot{\bt}(X_{t_{i-1}})^{\star}\dot{\bt}(X_{t_{i-1}})]\right) u.
\end{equation}
In particular, under the assumption that the drift function is of the form $\eqref{linearModel}$, we can express $\eqref{randomFunction}$ as 
\begin{equation}\label{randomFunctionLinearCase}
    H_u =  u^{\star} \left(\frac{1}{n}\sum_{i=1}^n \Phi(X_{t_{i-1}})^{\star}\Phi(X_{t_{i-1}}) - \E[\Phi(X_{t_{i-1}})^{\star}\Phi(X_{t_{i-1}})]\right) u.
\end{equation}
By \cref{A4}, the functions $\left(\Phi(\cdot)^{\star}\Phi{}(\cdot)\right)_{i,j} = \langle \phi_i,\phi_j \rangle:\R^d \rightarrow \R$ are  Lipschitz continuous. Therefore,   if we denote by $\mathcal{M}_{\text{Lip}}$ the matrix such that $\left(\mathcal{M}_{\text{Lip}}\right)_{i,j} = L_{ij}$, we see that
$\left\|  H_u\right\|_{\text{Lip}} \leq \| \mathcal{M}_{\text{Lip}}\|_{\text{op}}$.
We point out that, due to the linearity in the drift function, \eqref{randomFunctionLinearCase} does not depend on  $\theta$. 
As previously done in the martingale set, we introduce a quantity derived from the concentration inequality:
\begin{equation}\label{K}
    K_{\dn} := \frac{16 \dn^2 \exp(4L\dn )\| \mathcal{M}_{\text{Lip}}\|_{\text{op}}^2}{\left(1-\exp(-M \dn)\right)^2},
\end{equation}
that plays a role in controlling the probability of $\mathcal{T}''$. 
\begin{remark} 
\normalfont
As in Proposition \ref{propTmtg}, we have defined the constant $K_{\Delta_n}$ to control the probability of the compatibility condition set. As before, when $\Delta_n \rightarrow 0$, $K_{\Delta_n}$ converges to a constant dependent only on the model. Therefore, in the limit, the value is independent of $\Delta_n$, $d$, and $p$. 
\end{remark}

\begin{proposition}\label{propTPP}
    Let us consider \cref{A1}-\cref{A4}. Then, for all $z>0$,   
\begin{equation}\label{ConcentrationBoundtaupp}
    \P_{\theta_0}\left(  \frac{\left| H_u \right|}{\| u \|_2} \geq z\right) \leq  2\exp\left(-\frac{z^2 n \dn}{4d^2 K_{\dn} }\right).
\end{equation}
\end{proposition}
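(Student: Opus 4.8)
The plan is to reduce the quadratic form $H_u$ to a centered additive functional of a single scalar Lipschitz map and then invoke the concentration inequality of Proposition~\ref{ConcentrationInequality}. It suffices to treat a fixed unit vector $u \in \R^p$ with $\|u\|_2 = 1$, since this is precisely the regime in which $H_u$ enters the analysis of $\mathcal{T}''$, and the normalization $|H_u|/\|u\|_2 = |H_u|$ becomes trivial. For such $u$, I would introduce the scalar function
\[
f_u : \R^d \to \R, \qquad f_u(x) := u^{\star}\Phi(x)^{\star}\Phi(x)\,u = \|\Phi(x)u\|_2^2 = \sum_{i,j=1}^p u^i u^j \langle \phi_i,\phi_j\rangle(x),
\]
so that, comparing with \eqref{randomFunctionLinearCase}, one has exactly $H_u = \frac{1}{n}\sum_{i=1}^n\bigl(f_u(X_{t_{i-1}}) - \E_{\theta_0}[f_u(X_{t_{i-1}})]\bigr)$, i.e.\ a centered empirical average of the type handled by the second part of Proposition~\ref{ConcentrationInequality}.

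The next step is to bound the Lipschitz norm of $f_u$. Using \cref{A4}, each $\langle \phi_i,\phi_j\rangle$ is Lipschitz with constant $L_{ij}$, hence
\[
|f_u(x)-f_u(y)| \leq \Bigl(\sum_{i,j=1}^p |u^i||u^j| L_{ij}\Bigr)\|x-y\|_2 = \bigl(|u|^{\star}\mathcal{M}_{\text{Lip}}|u|\bigr)\|x-y\|_2 \leq \nop{\mathcal{M}_{\text{Lip}}}\,\|u\|_2^2\,\|x-y\|_2,
\]
where $|u|$ denotes the entrywise absolute value; since $\|u\|_2 = 1$ this gives $\|f_u\|_{\text{Lip}} \leq \nop{\mathcal{M}_{\text{Lip}}}$, consistent with the remark preceding \eqref{K}. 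Applying the tail bound of Proposition~\ref{ConcentrationInequality} to $f_u$ with this Lipschitz constant yields a one-sided bound with exponent $-z^2 n (1-\exp(-M\dn))^2 / \bigl(64\, d\, \nop{\mathcal{M}_{\text{Lip}}}^2\, \dn\, \exp(4L\dn)\bigr)$, and the same bound for $-f_u$ covers the lower tail, so a union bound over the two tails produces the factor $2$. Substituting the definition of $K_{\dn}$ from \eqref{K}, namely $(1-\exp(-M\dn))^2 / \bigl(\nop{\mathcal{M}_{\text{Lip}}}^2\exp(4L\dn)\bigr) = 16\dn^2 / K_{\dn}$, collapses the exponent to $-z^2 n\dn/(4 d K_{\dn})$, which is exactly \eqref{ConcentrationBoundtaupp}.

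I expect no deep obstacle here: the argument is essentially a clean application of the already-established Proposition~\ref{ConcentrationInequality}. The only genuine points requiring care are (i) the identification of the Lipschitz norm of the quadratic form $f_u$ through the operator norm $\nop{\mathcal{M}_{\text{Lip}}}$, which is the step where \cref{A4} is indispensable and which the OU case notably fails; and (ii) the minor bookkeeping that $H_u$ is a functional of $X_{t_0},\dots,X_{t_{n-1}}$ whereas Proposition~\ref{ConcentrationInequality} is stated for $X_{t_1},\dots,X_{t_n}$. The latter is not a real difficulty, since the proof of that proposition relies only on the Markov property and on having $n$ equidistant consecutive observations, so it transfers verbatim to the shifted index set; the remaining constant-matching is purely arithmetic.
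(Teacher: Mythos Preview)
Your proposal is correct and follows essentially the same approach as the paper: both reduce $H_u$ (for a unit vector $u$) to a centered empirical average of the Lipschitz scalar $f_u(x)=u^{\star}\Phi(x)^{\star}\Phi(x)u$ with $\|f_u\|_{\text{Lip}}\le\nop{\mathcal{M}_{\text{Lip}}}$, and then apply Proposition~\ref{ConcentrationInequality}. The only cosmetic difference is that the paper invokes the exponential-moment bound of Proposition~\ref{ConcentrationInequality} and then performs the Markov-inequality/optimization step explicitly, whereas you call the already-optimized tail bound directly; the resulting exponent $-z^2 n\dn/(4dK_{\dn})$ is identical.
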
  
\begin{proof}
By using Markov's inequality, it leads to 
\begin{align}\label{b}
     \P_{\theta_0}\left(  \frac{ H_u }{\| u \|_2} \geq z\right) &= \P_{\theta_0}\left(  \exp\left(\frac{\mu H_u}{\| u \|_2} \right) \geq \exp(\mu z)\right)\nonumber \\
     &\leq \E_{\theta_0}\left[\exp{\left( \frac{\mu H_u}{\| u \|_2}\right)}\right]\exp{(-\mu z)}. 
\end{align}
If we apply the concentration inequality gathered in Proposition  $\ref{ConcentrationInequality}$ to the expectation in \eqref{b}, it implies  that for all $\mu>0$,
\begin{equation}
    \P_{\theta_0}\left(  \frac{\left| H_u \right|}{\| u \|_2} \geq z\right) \leq 2\exp\left(\mu^2\frac{d^2 K_{\dn} }{n \dn} - \mu z\right).
\end{equation}
Finally, choosing  $\mu = \frac{n\dn z}{2 d^2 K_{\dn}}$, we conclude that, for all $z>0$, we have that 
\begin{equation}
    \P_{\theta_0}\left(  \frac{\left| H_u(\theta) \right|}{\| u \|_2} \geq z\right) \leq  2\exp\left(-\frac{z^2 n \dn}{4 d^2  K_{\dn} }\right),
\end{equation}
obtaining the desired result.
\end{proof}

\begin{theorem}\label{TheoremTPP}
    Under \cref{A1}-\cref{A5}, for a fixed value of $\epsilon \in (0,1)$,  for all $T > T_1$ and $0 < k < \sqrt{l}$, it holds that 
    \begin{equation*}
        \P_{\theta_0}(\mathcal{T}'') \geq 1-\epsilon.
    \end{equation*}
\end{theorem}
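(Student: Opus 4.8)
The plan is to use the linearity of the drift to turn $\mathcal{T}''$ into a restricted–eigenvalue statement for the empirical Fisher information, and then transfer the population lower bound of \cref{A5} to its empirical counterpart via the concentration bound of Proposition~\ref{propTPP} combined with a covering argument over the cone. First, since $\bt$ has the form \eqref{linearModel}, for any $\theta,\upsilon \in \R^p$ we have $\bt - b_\upsilon = \Phi(\cdot)(\theta-\upsilon)$. Writing $x := \theta - \upsilon$ and $\w{I} := \frac1n\sum_{i=1}^n \Phi(X_{t_{i-1}})^{\star}\Phi(X_{t_{i-1}})$, this gives
\begin{equation*}
\frac{\|\bt - b_\upsilon\|_D^2}{\|\theta-\upsilon\|_2^2} = \frac{x^{\star}\w{I}\, x}{\|x\|_2^2},
\end{equation*}
so $\mathcal{T}''$ is exactly the event $\{\inf_x x^{\star}\w{I}\, x/\|x\|_2^2 \geq k^2\}$, the infimum being over $x \in \mathcal{C}(s,3+4/\gamma)$; by homogeneity I restrict to $\|x\|_2 = 1$. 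Splitting $x^{\star}\w{I} x = x^{\star}I x + x^{\star}(\w{I} - I)x$ and invoking \cref{A5}, which yields $x^{\star} I x > l$ on the cone, it suffices to prove that, on an event of probability at least $1-\epsilon$,
\begin{equation*}
\sup_{x \in \mathcal{C}(s,3+4/\gamma),\,\|x\|_2=1}\bigl| x^{\star}(\w{I} - I)x\bigr| \leq l - k^2,
\end{equation*}
a quantity that is positive precisely because $0 < k < \sqrt{l}$.

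Second, I would reduce the supremum over the cone to a supremum over $2s$–sparse vectors. Ordering the coordinates of $x$ by decreasing magnitude and splitting them into consecutive blocks $T_0,T_1,\dots$ of size $s$, the defining inequality of $\mathcal{C}(s,3+4/\gamma)$ in \eqref{cone} controls the tail, and the standard shelling estimate $\|x_{T_j}\|_2 \leq s^{-1/2}\|x_{T_{j-1}}\|_1$ gives $\sum_j \|x_{T_j}\|_2 \leq (5 + 4/\gamma)\|x\|_2$. Expanding the quadratic form across blocks and bounding each cross term $x_{T_j}^{\star}(\w{I} - I)x_{T_{j'}}$ by $\rho := \sup\{|v^{\star}(\w{I} - I)v| : \|v\|_0 \leq 2s,\ \|v\|_2 = 1\}$ (every $T_j \cup T_{j'}$ has cardinality at most $2s$), I obtain
\begin{equation*}
\sup_{x \in \mathcal{C},\,\|x\|_2=1}\bigl|x^{\star}(\w{I} - I)x\bigr| \leq \Bigl(\textstyle\sum_j \|x_{T_j}\|_2\Bigr)^2 \rho \leq (5+4/\gamma)^2\,\rho .
\end{equation*}

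Third, I would control $\rho$ by a net argument. For each support $S \subset \{1,\dots,p\}$ with $|S| = 2s$ take a $1/10$–net $\mathcal{N}_S$ of the unit sphere of $\R^{S}$, of cardinality at most $21^{2s}$; there are $\binom{p}{2s} \leq p^{2s}\wedge (ep/2s)^{2s}$ such supports. A routine net–to–sphere comparison gives $\rho \leq C_0\max_S\max_{u\in\mathcal{N}_S}|u^{\star}(\w{I} - I)u|$ for an absolute constant $C_0$, and for each net point $u^{\star}(\w{I} - I)u$ is exactly the random variable $H_u$ of \eqref{randomFunctionLinearCase}. Applying Proposition~\ref{propTPP} with threshold $z$ proportional to $(l-k^2)/(5+4/\gamma)^2$, together with a union bound over the $21^{2s}\bigl(p^{2s}\wedge(ep/2s)^{2s}\bigr)$ points and the identity $n\dn = T$, the probability that the displayed supremum exceeds $l-k^2$ is at most
\begin{equation*}
2\cdot 21^{2s}\bigl(p^{2s}\wedge(ep/2s)^{2s}\bigr)\exp\!\Bigl(-\tfrac{z^2 T}{4 d K_{\dn}}\Bigr),
\end{equation*}
and the hypothesis $T > T_1$ is exactly what is needed to bound this by $\epsilon$; the constants $324$ and $(5+4/\gamma)^4$ in $T_1$ arise from carrying the block factor $(5+4/\gamma)^2$ through the square in the Gaussian exponent, together with $C_0$ and the factor $4dK_{\dn}$ of Proposition~\ref{propTPP}.

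I expect the third step to be the main obstacle: passing from the single–direction concentration bound of Proposition~\ref{propTPP} to a bound uniform over the whole cone. This requires both the sparse reduction of the second step and a careful covering/union–bound bookkeeping, so that the combinatorial factor $\binom{p}{2s}$ and the net cardinality $21^{2s}$ enter only logarithmically, and the dependence on $(l-k^2)^{-2}$, $(5+4/\gamma)^4$, $d$ and $K_{\dn}$ reproduces exactly the threshold $T_1$.
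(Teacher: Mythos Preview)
Your proposal is correct and follows essentially the same approach as the paper: reduce $\mathcal{T}''$ to a restricted–eigenvalue statement for $\w I$, split into population plus fluctuation via \cref{A5}, and control the cone supremum of the fluctuation by a sparse–reduction/covering argument together with Proposition~\ref{propTPP}. The only difference is presentational: the paper outsources the shelling and $\epsilon$–net steps to Lemmas~F.1–F.3 of \cite{Bas15}, while you spell them out directly, landing on the same combinatorial prefactor $21^{2s}\bigl(p^{2s}\wedge (ep/2s)^{2s}\bigr)$ and the same threshold $T_1$.
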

\begin{proof}
To prove our theorem, we start by noticing that 
\begin{align*}
    \frac{\|\bt - b_{\upsilon} \|_{D}^2}{\| \theta - \upsilon\|_2^2} &= \frac{\E\left[ \|\bt - b_{\upsilon} \|_{D}^2\right]}{\| \theta - \upsilon\|_2^2} - \frac{ \E\left[ \|\bt - b_{\upsilon} \|_{D}^2\right] - \|\bt - b_{\upsilon} \|_{D}^2}{\| \theta - \upsilon\|_2^2}.
\end{align*}
 From \cref{A5}  and applying the mean value theorem to  $\E\left[ \|\bt - b_{\upsilon} \|_{D}^2\right] - \|\bt - b_{\upsilon} \|_{D}^2$, we obtain that
\begin{align*}
   \frac{\|\bt - b_{\upsilon} \|_{D}^2}{\| \theta - \upsilon\|_2^2} \geq l +  \frac{ H_{\theta - \upsilon} }{\| \theta - \upsilon\|_2^2},
\end{align*}
where $H_{\theta - \upsilon}$ has been defined in $\eqref{randomFunctionLinearCase}$.
Now, using  properties of the supremum and the infimum, we have that
\begin{align*}\label{ineq7}
    &\P_{\theta_0}\left( \inf_{\substack{\theta \in \R^p  : \| \theta \|_0 =s \\ \upsilon \in \R^p : \theta - \upsilon \in \mathcal{C}(s,3 + 4/\gamma)}}      \frac{\|\bt - b_{\upsilon} \|_{D}^2}{\| \theta - \upsilon\|_2^2}  \geq k^2 \right) \geq  \\
     &\P_{\theta_0}\left( \inf_{\substack{\theta \in \R^p  : \| \theta \|_0 =s \\ \upsilon \in \R^p : \theta - \upsilon \in \mathcal{C}(s,3 + 4/\gamma)}} \frac{ H_{\theta - \upsilon} }{\| \theta - \upsilon\|_2^2}   \geq -(l-k^2) \right) = \\
     &\P_{\theta_0}\left( \sup_{\substack{\theta \in \R^p  : \| \theta \|_0 =s \\ \upsilon \in \R^p : \theta - \upsilon \in \mathcal{C}(s,3 + 4/\gamma)}} \frac{ -H_{\theta - \upsilon} }{\| \theta - \upsilon\|_2^2}   < l-k^2 \right) \geq \nonumber\\
    &1- \P_{\theta_0}\left( \sup_{\substack{\theta \in \R^p  : \| \theta \|_0 =s \\ \upsilon \in \R^p : \theta - \upsilon \in \mathcal{C}(s,3 + 4/\gamma)}} \frac{ |H_{\theta - \upsilon}| }{\| \theta - \upsilon\|_2^2}   \geq l-k^2 \right).
\end{align*}
Therefore, we can conclude that 
\begin{equation}\label{ineqq}
    \P_{\theta_0}\left( \inf_{\substack{\theta \in \R^p  : \| \theta \|_0 =s \\ \upsilon \in \R^p : \theta - \upsilon \in \mathcal{C}(s,3 + 4/\gamma)}}      \frac{\|\bt - b_{\upsilon} \|_{D}^2}{\| \theta - \upsilon\|_2^2}  \geq k^2 \right) \geq 1- \P_{\theta_0}\left( \sup_{ u \in \mathcal{C}(s,3 + 4/\gamma)} \frac{ |H_{u}| }{\| u\|_2^2}   \geq l - k^2 \right).
\end{equation}
Now, to prove our statement, we derive a bound for the probability of the right part of inequality $\eqref{ineqq}$.
For any $p \geq 1, q \geq 0, r > 0$, we define $\mathbb{B}_q(r) := \{ u \in \R^p \hspace{0.1cm}| \hspace{0.1cm} \|u\|_q \leq r\}$ the ball centred at 0 of radius $r$ in norm $l_q$ and the spare set $\mathcal{K}(s) := \mathbb{B}_0(s) \cup \mathbb{B}_2(1).$ For any set $A$, we denote by $\text{conv}\{a\}$ and $cl\{A\}$ the convex hull and the closure of $A$, respectively. 
Working similarly to Lemma F.1 and F.3 from the supplementary material of \cite{Bas15}, we can show that
\begin{align*}
    \P_{\theta_0}\left( \sup_{ u \in \mathcal{C}(s,3 + 4/\gamma)} \frac{ |H_{u}| }{\| u\|_2^2}   \geq l-k^2 \right) &\leq \P_{\theta_0}\left( \sup_{ u \in \left(5 + \frac{4}{\gamma} \right)cl\{\text{conv}(\mathcal{K}(s))\}} \frac{ |H_{u}| }{\| u\|_2^2}   \geq l-k^2 \right) \nonumber\\
    & \leq  \P_{\theta_0}\left( \left(5 + \frac{4}{\gamma} \right)^2 \sup_{ u \in cl\{\text{conv}(\mathcal{K}(s))\}} \frac{ |H_{u}| }{\| u\|_2^2}   \geq l-k^2 \right) \nonumber \\
    &\leq \P_{\theta_0}\left(  \sup_{ u \in \mathcal{K}(2s)} \frac{ |H_{u}| }{\| u\|_2^2}   \geq \frac{l-k^2}{3\left(5 + \frac{4}{\gamma} \right)^2} \right) .
\end{align*}
Following the proof of Lemma F.2 from the supplementary material of \cite{Bas15}, we obtain that 
\begin{equation}\label{ep}
    \P_{\theta_0}\left( \sup_{ u \in \mathcal{C}(s,3 + 4/\gamma)} \frac{ |H_{u}| }{\| u\|_2^2}   \geq l-k^2 \right) \leq   21^{2s}\left(p^{2s} \wedge \left(\frac{ep}{2s}\right)^{2s}\right)\sup_{u \in \Theta}\P_{\theta_0}\left(  \frac{ |H_{u}| }{\| u\|_2^2}   \geq \frac{l-k^2}{9\left(5 + \frac{4}{\gamma}\right)^2} \right).
\end{equation}
Finally, combining $\eqref{ineqq}$ and $\eqref{ep}$, and using Proposition $\ref{propTPP}$, we have that 
\begin{align*}
 \P_{\theta_0}\left( \mathcal{T}''\right)&=\P_{\theta_0}\left( \inf_{\substack{\theta \in \R^p  : \| \theta \|_0 =s \nonumber \\ \upsilon \in \R^p : \theta - \upsilon \in \mathcal{C}(s,3 + 4/\gamma)}}      \frac{\|\bt - b_{\upsilon} \|_{D}^2}{\| \theta - \upsilon\|_2^2}  \geq k^2 \right) \nonumber \\
 &\geq 1- 21^{2s}\left(p^{2s} \wedge \left(\frac{ep}{2s}\right)^{2s}\right)\sup_{u \in \Theta}\P_{\theta_0}\left(  \frac{ |H_{u}| }{\| u\|_2^2}   \geq \frac{l-k^2}{9\left(5 + \frac{4}{\gamma}\right)^2} \right) \nonumber \\
  &\geq 1 - 21^{2s}\left(p^{2s} \wedge \left(\frac{ep}{2s}\right)^{2s}\right)2\exp{\left(\frac{- n  \dn(l-k^2)^2 }{324  \left(5 + \frac{4}{\gamma}\right)^4 d^2 K_{\dn}}\right)}.
\end{align*}
If we define the constant 
\begin{equation*}
    T_1 := \frac{324 d^2 K_{\dn} \left(5 + \frac{4}{\gamma}\right)^4}{(l-k^2)^2}\left[\ln{\left(\frac{2}{\epsilon}\right)} + \ln{\left(21^{2s}\left(p^{2s} \wedge \left(\frac{ep}{2s}\right)^{2s}\right)\right)}\right],
\end{equation*}
for a fixed value of $\epsilon \in (0,1)$, if $n\dn = T \geq T_1$, then 
\begin{equation*}
    \P_{\theta_0}(\mathcal{T}'') \geq 1 - \epsilon,
\end{equation*} 
obtaining the desired result.
\end{proof}

\subsection{Controlling probabilities of the sets $\mathcal{T}, \mathcal{T}'$ and $\mathcal{T}''$ for the OU model}\label{sectionControllingProbabilities_OU}
In this section, we present and prove the results adapted to the OU process. Although there are key differences between the OU model and the linear drift case, the general structure and procedure of the proofs in this section are similar to those detailed in the previous one. For this reason, we present the results in a more direct way. In contrast to the general linear case, we begin by establishing control over the set $\mathcal{T}''$ since it provides valuable input for the analysis of $\mathcal{T}$.

\subsubsection{The $\mathcal{T}''$ set}
Using the bi-linear random form defined in \eqref{bilinearFormOU}, the compatibility condition set $\mathcal{T}''$ can be written for the OU case model \eqref{OUdefinition} as 
\begin{gather*}
    \mathcal{T}'' := \left\{ \inf_{\substack{A \in \M{d}{d}  : \| A \|_0 =s \\ B \in \M{d}{d} : A - B \in \mathcal{C}(s,3 + 4/\gamma)}}{\frac{\|A - B\|_D}{\|A-B\|_2} \geq k}\right\} \\
    =  
    \left\{ \inf_{\substack{ U \in \mathcal{C}(s,3 + 4/\gamma)}}{\frac{\|U\|_D}{\|U\|_2} \geq k}\right\}.
\end{gather*}

\begin{theorem}\label{TheoremTPP_OU}
    Under \cref{AOU} and \cref{LminLmaxCinf}, for a fixed value of $\epsilon \in (0,1)$,  for all $T > T_1^{OU}(\epsilon,\alpha,\beta)$ and $0 < k < \sqrt{\mathfrak{l}_{\min}}$, it holds that 
    \begin{equation*}
        \P_{A_0}(\mathcal{T}'') \geq 1-\epsilon.
    \end{equation*}
\end{theorem}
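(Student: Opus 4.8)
The plan is to mirror the strategy of Theorem~\ref{TheoremTPP}, replacing \cref{A5} by the non-degeneracy of $C_\infty$ and Proposition~\ref{propTPP} by the Malliavin-based bound of Proposition~\ref{PropCI_OU}. First I would observe that for any $U \in \M{d}{d}$ the empirical norm is a trace against the empirical covariance,
\begin{equation*}
\|U\|_D^2 = \frac{1}{n}\sum_{i=1}^n \|U X_{t_{i-1}}\|_2^2 = \tr\!\left(U^\star U \, C_T\right), \qquad \E_{A_0}\!\left[\|U\|_D^2\right] = \tr\!\left(U^\star U \, C_\infty\right),
\end{equation*}
the latter because $X_{t_{i-1}} \sim \mathcal{N}(0,C_\infty)$ in the stationary regime. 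Decomposing $\|U\|_D^2/\|U\|_2^2$ into its mean and its fluctuation, the eigendecomposition of $C_\infty$ together with \cref{LminLmaxCinf} gives $\tr(U^\star U C_\infty) \geq \mathfrak{l}_{\min}\tr(U^\star U) = \mathfrak{l}_{\min}\|U\|_2^2$, so the mean part is bounded below by $\mathfrak{l}_{\min}$, which here plays the role of the constant $l$ of the linear case. Since $k < \sqrt{\mathfrak{l}_{\min}}$, it then suffices to control
\begin{equation*}
\P_{A_0}\!\left( \sup_{U \in \mathcal{C}(s,3+4/\gamma)} \frac{\bigl|\tr\!\left(U^\star U (C_T - C_\infty)\right)\bigr|}{\|U\|_2^2} \geq \mathfrak{l}_{\min} - k^2 \right).
\end{equation*}

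Next I would vectorize the fluctuation. Writing $D := C_T - C_\infty$ and $w := \mathrm{vec}(U)$, the identity $\tr(U^\star U D) = w^\star (D \otimes I)\, w$ turns the matrix problem into a quadratic form over $\R^{d^2}$, with $U \in \mathcal{C}(s,3+4/\gamma)$ corresponding to $w$ in the same cone viewed in $\R^{d^2}$, so that effectively $p = d^2$. At this point the covering argument of Lemma~F.1--F.3 of \cite{Bas15}, applied verbatim as in the proof of Theorem~\ref{TheoremTPP}, reduces the supremum over the cone to a finite maximum over a net: up to the covering factor $21^{2s}\bigl(d^{4s}\wedge(ed^2/2s)^{2s}\bigr)$ and the rescaling constant $\beta = 9(5+4/\gamma)^2$, one is left with bounding $\P_{A_0}(|w^\star (D\otimes I) w| \geq (\mathfrak{l}_{\min}-k^2)/\beta)$ for a fixed unit vector $w$.

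To handle this fixed-vector probability I would exploit the Kronecker structure. Denoting by $r_1,\dots,r_d$ the rows of $U$, one has $w^\star(D\otimes I)w = \tr(U D U^\star) = \sum_{i=1}^d \|r_i\|_2^2\, \hat r_i^\star D \hat r_i$ with $\hat r_i := r_i/\|r_i\|_2$ and $\sum_i \|r_i\|_2^2 = \|U\|_2^2 = 1$. As this is a convex combination, $|w^\star(D\otimes I)w|\geq z$ forces $|\hat r_i^\star D \hat r_i| \geq z$ for some $i$, whence a union bound yields
\begin{equation*}
\P_{A_0}\!\left(|w^\star(D\otimes I)w|\geq z\right) \leq d \sup_{\|v\|_2=1}\P_{A_0}\!\left(|v^\star (C_T-C_\infty) v|\geq z\right),
\end{equation*}
which accounts for the extra factor $2d$ in $\alpha$. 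Applying Proposition~\ref{PropCI_OU} with $z = (\mathfrak{l}_{\min}-k^2)/\beta$ and using the large-$T$ behaviour $n(1-e^{-\mathfrak{m}\dn}) \sim \mathfrak{m} T$ and $(1-e^{-(n+1)\mathfrak{m}\dn})\to 1$ produces an exponential bound whose rate matches the denominator of $T_1^{OU}$; choosing $T > T_1^{OU}(\epsilon,\alpha,\beta)$ makes the product of $\alpha$ with this exponential at most $\epsilon$, giving the claim.

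The main obstacle is this fixed-vector concentration step: one must pass from the $d^2$-dimensional form $w^\star(D\otimes I)w$ back to the $d$-dimensional forms $v^\star(C_T-C_\infty)v$ covered by Proposition~\ref{PropCI_OU} without losing more than a factor $d$, and then reconcile the cumbersome exponent of \eqref{CIOUbruta} (with its $(x+\mathfrak{l}_{\max})^{-1}$ and $(1-e^{-(n+1)\mathfrak{m}\dn})$ terms) with the clean expression defining $T_1^{OU}$. Careful bookkeeping of the constants $\beta = 9(5+4/\gamma)^2$ and $\alpha$ through the covering argument, together with the condition $k<\sqrt{\mathfrak{l}_{\min}}$ ensuring $\mathfrak{l}_{\min}-k^2>0$, is what makes the threshold come out as stated.
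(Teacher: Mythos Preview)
Your proposal is correct and follows essentially the same route as the paper: you compute $\|U\|_D^2$ as a trace against $C_T$, lower-bound the mean part by $\mathfrak{l}_{\min}$ via \cref{LminLmaxCinf}, invoke the cone-to-net covering argument of \cite{Bas15} exactly as in Theorem~\ref{TheoremTPP} (with $p=d^2$), decompose the resulting quadratic form over the rows of $U$ as a convex combination to reduce to $\sup_{\|v\|_2=1}\P_{A_0}(|v^\star(C_T-C_\infty)v|\geq z)$ at the cost of a factor $d$, and then apply Proposition~\ref{PropCI_OU} in its asymptotic $H_0$ form to read off the threshold $T_1^{OU}(\epsilon,\alpha,\beta)$. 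The paper does exactly this (with $\mathbf{D}_{C_T}=I\otimes(C_\infty-C_T)$ rather than your $D\otimes I$, a harmless convention difference given the row-sum identity you both use).
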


\begin{proof}
First, we observe that for a matrix $U \in \M{d}{d}$,
\begin{equation*}
    \|U\|_{D}^2 =\frac{1}{n}\sum_{i=1}^n(UX_{t_{i-1}})^{\star}(UX_{t_{i-1}}) = \frac{1}{n}\sum_{i=1}^n\tr(UX_{t_{i-1}}X_{t_{i-1}}^{\star}U^{\star}) = \tr(UC_TU^{\star}).
\end{equation*}
In particular, 
\begin{equation*}
    \frac{\|U\|_{D}^2}{\|U\|_2^2} = \frac{\tr(U(C_{\infty})U^{\star})}{\|U\|_2^2} - \frac{\tr(U(C_{\infty}-C_{T})U^{\star})}{\|U\|_2^2} \geq \mathfrak{l}_{\min} - \frac{\tr(U(C_{\infty}-C_{T})U^{\star})}{\|U\|_2^2},
\end{equation*}
where we can see the analogy with the function $H_u$ defined in \eqref{randomFunctionLinearCase}, in this case considering  $H_U = \tr(U(C_{\infty}-C_{T})U^{\star})$. Therefore, by analogy, following the proof of Theorem \ref{TheoremTPP} (crf  \eqref{ep}), we obtain that 
\begin{align*}
 \P_{A_0}\left( \mathcal{T}''\right)& \geq 1- 21^{2s}\left(d^{4s} \wedge \left(\frac{ed^2}{2s}\right)^{2s}\right)\sup_{U \in \mathbb{A}}\P_{A_0}\left(  \frac{ |H_{U}| }{\| U\|_2^2}   \geq \frac{\mathfrak{l}_{\min}-k^2}{9\left(5 + \frac{4}{\gamma}\right)^2} \right).
\end{align*}
For a fix $U \in \mathbb{A}$, we denote its $j$-th row vector by $u^j$ and $\mathbf{u} = \text{vec}(U) \in \R^{d^2}$. Moreover, if we define a symmetric random matrix $\mathbf{D}_{C_T} = id \otimes (C_\infty - C_ T) \in \M{d^2}{d^2}$, we note that 
\begin{equation*}
\frac{|\tr(U(C_{\infty})U^{\star})|}{\|U\|_2^2} = \frac{|\mathbf{u}^{\star}\mathbf{D}_{C_T}\mathbf{u}|}{\|\mathbf{u}\|_2^2} = \frac{|\sum_{j=1}^{d} u_j (C_\infty - C_T) u_j^\star|}{\sum_{j=1}^{d} \|u_j\|_2^2}.
\end{equation*}
Therefore, using the concentration inequality of Proposition \ref{PropCI_OU}, we obtain that 
\begin{equation}\label{referef}
    \P_{A_0}\left( \mathcal{T}''\right) \geq 1- 21^{2s}\left(d^{4s} \wedge \left(\frac{ed^2}{2s}\right)^{2s}\right)2d\exp\left(-n\dn H_0\left(\frac{\mathfrak{l}_{\min}-k^2}{9(5 +\frac{4}{\gamma})^2}\right)\right).
\end{equation}
Finally, considering $T > T_1^{OU}(\epsilon,\alpha,\beta)$, 
with $\beta=9(5 + 4/\gamma)^2$ and $\alpha = 21^{2s}2d(d^{4s} \wedge (ed^2)/(2s))^{2s}$, we obtain the desired result.
\end{proof}

\subsubsection{The martingale set $\mathcal{T}$}
Observe that, in the OU case, set $\mathcal{T}$ is given explicitly by 
\begin{equation*}
    \mathcal{T} := \left\{ \left\| \frac{1}{n} \sum_{i=1}^{n} \Delta W_i X_{t_{i-1}}^{\star} \right\|_{\infty} \leq \frac{\lambda}{4}\right\}.
\end{equation*}
\begin{theorem}\label{TheoremT_OU} 
Under \cref{AOU} and \cref{LminLmaxCinf}, for a fixed value of $\epsilon \in (0,1)$ and for all $\lambda > \lambda_1^{OU}$ and $T > T_1^{OU}(\epsilon/2,\alpha,\beta)$, it holds that 
    \begin{equation*}
        \P_{A_0}(\mathcal{T}) \geq 1-\epsilon.
    \end{equation*}
\end{theorem}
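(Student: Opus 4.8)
The plan is to reduce the matrix sup-norm appearing in $\mathcal{T}$ to $d^2$ scalar martingale deviations, and to tame the random quadratic variation of each of them by recycling the high-probability event that already underlies the proof of Theorem~\ref{TheoremTPP_OU}. First I would fix $j,k\in\{1,\dots,d\}$ and write the $(j,k)$ entry of the defining matrix as a stochastic integral,
$$\frac{1}{n}\sum_{i=1}^{n}(\Delta W_i)^j X_{t_{i-1}}^k=\frac{1}{n}\int_0^T X^k_{\underline s}\,dW_s^j,\qquad X^k_{\underline s}:=X^k_{t_{i-1}}\ \text{for } s\in[t_{i-1},t_i),$$
which is a continuous $L^2$-martingale whose quadratic variation is $\frac{1}{n^2}\int_0^T (X^k_{\underline s})^2\,ds=\frac{\dn}{n}(C_T)_{kk}$. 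Hence every entry is governed by a martingale whose quadratic variation equals, up to the deterministic factor $\dn/n$, a diagonal entry of the empirical covariance matrix $C_T$.

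The difficulty is that this quadratic variation is random, so before invoking the exponential (Bernstein-type) inequality $\P(N_T\ge x,\ \langle N\rangle_T\le v)\le\exp(-x^2/2v)$ for continuous martingales I must produce a deterministic envelope for $(C_T)_{kk}$ valid with high probability. I would extract this from the two-sided control established inside the proof of Theorem~\ref{TheoremTPP_OU}: on the event
$$\mathcal{E}:=\Big\{\sup_{U\in\mathcal{C}(s,3+4/\gamma)}\frac{|H_U|}{\|U\|_2^2}<\mathfrak{l}_{\min}-k^2\Big\},\qquad H_U=\tr\!\big(U(C_\infty-C_T)U^\star\big),$$
which satisfies $\P_{A_0}(\mathcal{E})\ge 1-\epsilon/2$ as soon as $T>T_1^{OU}(\epsilon/2,\alpha,\beta)$, I may specialise to $U=e_\ell e_k^\star$. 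Such a matrix has a single nonzero entry, hence sparsity $1\le s$ and therefore lies in the cone, and it gives $H_U/\|U\|_2^2=(C_\infty-C_T)_{kk}$. Consequently $|(C_T)_{kk}-(C_\infty)_{kk}|<\mathfrak{l}_{\min}-k^2$ on $\mathcal{E}$, so that $(C_T)_{kk}<\mathfrak{a}+\mathfrak{l}_{\min}-k^2$, with $\mathfrak{a}$ bounding the diagonal entries of $C_\infty$.

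With this envelope in place, on $\mathcal{E}$ the quadratic variation of the $n^{-1}$-scaled entry martingale is at most $\frac{\dn}{n}(\mathfrak{a}+\mathfrak{l}_{\min}-k^2)=:v$, and the continuous-martingale exponential bound with $x=\lambda/4$ yields, for each $(j,k)$,
$$\P_{A_0}\!\Big(\Big|\tfrac1n\sum_{i=1}^n(\Delta W_i)^jX_{t_{i-1}}^k\Big|>\tfrac{\lambda}{4},\ \mathcal{E}\Big)\le 2\exp\!\Big(-\frac{n\lambda^2}{32\,\dn(\mathfrak{a}+\mathfrak{l}_{\min}-k^2)}\Big).$$
A union bound over the $d^2$ entries, together with the choice $\lambda>\lambda_1^{OU}$, then keeps $\P_{A_0}(\mathcal{T}^C\cap\mathcal{E})$ below $\epsilon/2$; combined with $\P_{A_0}(\mathcal{E}^C)\le\epsilon/2$ this gives $\P_{A_0}(\mathcal{T})\ge 1-\epsilon$, and the exponent $32$ together with the logarithmic factor $\ln(d^2)+\ln(2/\epsilon)$ are exactly what reproduce $\lambda_1^{OU}$.

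The main obstacle is precisely this coupling: the martingale's quadratic variation is itself a random path functional, and the whole argument rests on transferring the concentration of $C_T$ about $C_\infty$—which is what the $\mathcal{T}''$ analysis delivers via Proposition~\ref{PropCI_OU} and the covering of the cone—into a sure upper bound for $\langle N^{jk}\rangle_T$. This is why the statement must impose both $\lambda>\lambda_1^{OU}$ and $T>T_1^{OU}(\epsilon/2,\alpha,\beta)$, splitting the failure probability evenly between the empirical-covariance event and the martingale tails.
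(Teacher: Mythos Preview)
Your proposal is correct and follows essentially the same route as the paper: both reduce the matrix $\ell_\infty$ bound to $d^2$ scalar martingale tails, bound the (random) quadratic variation $\dn(C_T)_{kk}$ by $\dn(\mathfrak{a}+\mathfrak{l}_{\min}-k^2)$ via the empirical-covariance concentration already used for $\mathcal{T}''$, and then apply the Bernstein-type exponential martingale inequality, splitting the $\epsilon$ budget between the covariance event and the union of martingale tails. The only cosmetic differences are that the paper phrases the martingale as a discrete-time one and writes the diagonal control as a chain of event inclusions rather than evaluating $H_U$ at the single-entry matrix $U=e_\ell e_k^\star$; both are equivalent formulations of the same argument.
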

\begin{proof}
For each $ 1\leq j,k \leq d$, we can define the univariate martingale 
$M_n^{jk} = \frac{1}{\sqrt{n}} \sum_{i=1}^n X_{t_{i-1}}^j\Delta W_{i}^k$, whose quadratic variation satisfies that $\langle M^{jk} \rangle_n = \dn  C_T^{jj}$. Using the union bound property and defining $\mathfrak{a}=\|\text{diag}(C_\infty)\|_{\infty}$, we obtain that
\begin{gather}
    \P_{A_0}\left(\left\|\frac{1}{n} \sum_{i=1}^{n} \Delta W_i X_{t_{i-1}}^{\star} \right\|_{\infty} \geq \frac{\lambda}{4}\right) \leq  \nonumber\\\P_{A_0}\left(\left\|\frac{1}{n} \sum_{i=1}^{n} \Delta W_i X_{t_{i-1}}^{\star} \right\|_{\infty} \geq \frac{\lambda}{4}, \max_{1\leq j,k\leq d}\langle M^{jk} \rangle_n  \leq \dn(\mathfrak{a} + \mathfrak{l}_{\min} - k^2) \right) \nonumber \\ 
    +  \P_{A_0}\left( \max_{1\leq j,k\leq d}\langle M^{jk} \rangle_n  \geq \dn(\mathfrak{a}+ \mathfrak{l}_{\min} - k^2) \right) \leq \nonumber \\
     d^2 \max_{1\leq j,k \leq d}\P_{A_0}\left(|M_n^{jk}| \geq \frac{\lambda\sqrt{n}}{4} , \langle M^{jk} \rangle_n  \leq \dn(\mathfrak{a} + \mathfrak{l}_{\min} - k^2) \right) \label{etiqueta1}\\
    + \P_{A_0}\left(\max_{1\leq j \leq k}|C_T^{jj}| \geq (\mathfrak{a}+\mathfrak{l}_{\min} - k^2) \right) \label{etiqueta2}.
\end{gather}
Since the event 
\begin{gather*}
    \left\{\max_{1\leq j\leq d}|C_T^{jj}| \geq (\mathfrak{a} + \mathfrak{l}_{\min} - k^2)\right\} \subseteq \left\{ \|\text{diag}(C_{\infty} - C_T)\|_{\infty} \geq \mathfrak{l}_{\min} - k^2\right\} \\
    \subseteq \left\{\sup_{U \in C(s,3+4/\gamma) } \frac{|\tr(U(C_{\infty}-C_{T})U^{\star})|}{\|U\|_2^2}\right\},
\end{gather*}
if $T \geq T_1^{OU}(\epsilon/2,\alpha,\beta)$, from Proposition \ref{PropCI_OU}, as bounded in \eqref{referef}, the probability \eqref{etiqueta2} can be bounded by $\epsilon/2$. For controlling term \eqref{etiqueta1}, Bernstein's inequality for discrete martingales, leads to
\begin{equation*}
    \P_{A_0}\left(|M_n^{jk}| \geq \frac{\lambda\sqrt{n}}{4} , \langle M^{jk} \rangle_n  \leq \dn(\mathfrak{a} + \mathfrak{l}_{\min} - k^2) \right) \leq \exp\left(\frac{-\lambda^2n}{32(\mathfrak{a} + \mathfrak{l}_{\min}-k^2)}\right).
\end{equation*}
Therefore, choosing $\lambda \geq \lambda_1^{OU}$, we obtain that
\begin{equation*}
    \P_{A_0}(\mathcal{T}) \geq 1- \P_{A_0}\left(\left\|\frac{1}{n} \sum_{i=1}^{n} \Delta W_i X_{t_{i-1}}^{\star} \right\|_{\infty} \geq \frac{\lambda}{4}\right) \geq 1- \epsilon, 
\end{equation*}
which completes the proof.
\end{proof}

\subsubsection{The approximation set $\mathcal{T}'$}

In the case of the OU process, the approximation set is explicitly expressed as
\begin{equation}\label{matrixAppSet}
    \mathcal{T}' := \left\{ \left\|  \frac{1}{n} \sum_{i=1}^{n} \left( A_0\int_{t_{i-1}}^{t_i}\left( X_s -X_{t_{i-1}}\right)ds\right) X_{t_{i-1}}^*\right\|_{\infty} \leq \frac{\lambda}{4}\right\}.
\end{equation}
Following a similar procedure as for the general linear drift case, for $1\leq j,k \leq d$, if we denote by $a_{jk}$ the element of the $j$-row and $k$-column of $A_0$, {we observe that each entry of the matrix inside the supremum norm in \eqref{matrixAppSet} can be expressed as 
\begin{equation}
    M_{jk} = \frac{1}{n}\sum_{i=1}^nM_{jk}^{i} = \frac{1}{n}\sum_{i=1}^n \sum_{l=1}^d a_{jl}\left(\int_{t_{i-1}}^s (X_s - X_{t_{i-1}})^l ds\right)X_{t_{i-1}}^k.
\end{equation}
Therefore, if we define the univariate process 
\begin{equation*}
    F_{jk} = \frac{1}{\sqrt{n}}\sum_{i=1}^n F_{jk}^i   = \frac{{d}}{\sqrt{n}}\sum_{i=1}^n \max_{jk} |a_{jk}|\left(\int_{t_{i-1}}^s (X_s - X_{t_{i-1}})^j ds\right)X_{t_{i-1}}^k,
\end{equation*}
{we have that all entries of the matrix in \eqref{matrixAppSet} are controlled in absolute value by $|F_{jk}|$.}
\begin{proposition}\label{propTapproxOU}
    Let us consider \cref{AOU} and \cref{LminLmaxCinf}. Then, there exists a constant $C_b^{OU}>0$ independent of $d$, $n$ and $T$ such that
 \begin{equation}\label{boundPt2OU}
    \P_{A_0}\left(|F_{jk}| \geq u \right) \leq \exp\left(-\frac{u^2}{4\exp(2)C_b^{OU} {n} d^4 \dn^{3}}\right).
\end{equation}
\end{proposition}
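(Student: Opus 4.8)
The plan is to follow verbatim the two-stage strategy used in the proof of Proposition~\ref{propTapprox}: first establish a bound on every moment $\E_{A_0}[|F_{jk}|^r]$ exhibiting the correct $\dn^{3/2}$- and $d$-scaling with an $r^{r/2}$ growth, and then convert it into the stated sub-Gaussian tail via Markov's inequality applied to $x\mapsto x^r$ with an optimized $r$. Writing $F_{jk}=\tfrac1{\sqrt n}\sum_{i=1}^n F_{jk}^i$, I would bound each summand by Cauchy--Schwarz, decoupling the increment factor from the observation factor:
\[
\E_{A_0}\!\left[|F_{jk}^i|^r\right]\le (\max_{j,l}|a_{jl}|)^r\,\E_{A_0}\!\left[\Big|\int_{t_{i-1}}^{t_i}(X_s-X_{t_{i-1}})\,ds\Big|_{(j)}^{2r}\right]^{1/2}\E_{A_0}\!\left[|X^k_{t_{i-1}}|^{2r}\right]^{1/2},
\]
where the first factor collects the $j$-th coordinate of the drift-weighted increment appearing in the $(j,k)$-entry of the matrix defining $\mathcal{T}'$.

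For the observation factor, I would exploit that under \cref{AOU}--\cref{LminLmaxCinf} the stationary law is $X_{t_{i-1}}\sim\mathcal N(0,C_\infty)$, so each marginal $X^k_{t_{i-1}}$ is centred Gaussian with variance at most $\mathfrak{l}_{\max}$, giving $\E_{A_0}[|X^k_{t_{i-1}}|^{2r}]^{1/2}\le \mathfrak{l}_{\max}^{r/2}((2r-1)!!)^{1/2}$; this is the OU analogue of the linear-growth moment bound $J_1$ and replaces the general ergodic-moment estimates of Remark~\ref{remarkBoundedUnif}, which are unavailable here since \cref{A4} fails. For the increment factor I would apply Jensen's inequality on the time integral, $|\int_{t_{i-1}}^{t_i}(\cdot)\,ds|^{2r}\le \dn^{2r-1}\int_{t_{i-1}}^{t_i}|(\cdot)|^{2r}\,ds$, and insert the OU dynamics $X_s-X_{t_{i-1}}=-A_0\int_{t_{i-1}}^s X_u\,du+(W_s-W_{t_{i-1}})$. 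Splitting via $|a+b|^{2r}\le 2^{2r-1}(|a|^{2r}+|b|^{2r})$ exactly as in \eqref{bound1}--\eqref{bound2}, the drift part is of higher order $\dn^{2r}$ and negligible as $\dn\to0$, while the Brownian part is exactly Gaussian and contributes the dominant order $\dn^r(2r-1)!!$. The factor $d$ enters because the true $(j,k)$-entry is the full matrix product $\big(A_0\int(X_s-X_{t_{i-1}})\,ds\big)^j=\sum_{l} a_{jl}\int(X_s-X_{t_{i-1}})^l\,ds$, whose $d$-term sum, bounded by $\max_{j,l}|a_{jl}|$ times a Cauchy--Schwarz estimate over the $d$ coordinates, produces one power of $d$; crucially the sparsity of $A_0$ is \emph{not} exploited here, which is exactly why $s$ disappears relative to the linear case.

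Collecting these estimates yields $\E_{A_0}[|F_{jk}^i|^r]\le (2d\dn^{3/2})^r C^{r/2}(2r-1)!!$ for a model constant $C$ independent of $d,n,T$, and then Jensen's inequality over the normalized sum $\tfrac1{\sqrt n}\sum_i$ gives $\E_{A_0}[|F_{jk}|^r]\le (2\sqrt n\,d\,\dn^{3/2})^r (C_b^{OU})^{r/2} r^{r/2}$, after using Stirling to absorb $(2r-1)!!$ into $r^{r/2}$ up to an $r$-independent base, precisely as in \eqref{boundFj}. Finally Markov's inequality gives $\P_{A_0}\big(|F_{jk}|\ge 2e(C_b^{OU})^{1/2}\sqrt n\,d\,\dn^{3/2}\,r\big)\le\exp(-r^2)$, and choosing $r=u/\big(2e(C_b^{OU})^{1/2}\sqrt n\,d\,\dn^{3/2}\big)$ reproduces the claimed bound, since then $r^2=u^2/\big(4\exp(2)C_b^{OU} n d^2\dn^3\big)$.

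The main obstacle I anticipate is keeping the dimension dependence honest while the Gaussian machinery replaces the uniform-boundedness arguments of the linear case: one must verify that the drift contribution to the increment is genuinely subdominant in $\dn$ (so that the clean $\dn^{3/2}$-scaling survives), confirm that the $d^2$ in the exponent is a valid upper bound arising from the coupling of the $d$-fold sum in $A_0X$ with the Cauchy--Schwarz split, and check that the constant $C_b^{OU}$ can indeed be chosen independent of $d,n,T$ by controlling the Gaussian moments uniformly through $\mathfrak{l}_{\max}$ and $\|A_0\|$ alone.
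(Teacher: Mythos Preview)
Your proposal is correct and follows essentially the same route as the paper's proof: both decompose $F_{jk}=n^{-1/2}\sum_i F_{jk}^i$, bound $\E_{A_0}[|F_{jk}^i|^r]$ via Cauchy--Schwarz separating the observation factor from the increment factor, use the SDE dynamics together with $|a+b|^{2r}\le 2^{2r-1}(|a|^{2r}+|b|^{2r})$ to show the Brownian contribution $\sim d^r\dn^r J_2(r)$ dominates the drift contribution, pass through Jensen over the sum and Stirling to reach $\E_{A_0}[|F_{jk}|^r]\le (2\sqrt n\,d\,\dn^{3/2})^r(C_b^{OU})^{r/2}r^{r/2}$, and conclude by Markov with $f(x)=x^r$. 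The paper packages the moment constants abstractly as $J_1^{OU}(r)$ and $J^{OU}(r)$ and invokes the analogy with \eqref{boundEofFjLinearCase} directly, whereas you make the Gaussian structure explicit via $(2r-1)!!$ and $\mathfrak l_{\max}$ and spell out that the factor $d$ arises from the $d$-term sum in $(A_0 v)^j=\sum_l a_{jl}v^l$; these are presentational refinements rather than a different argument.
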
  
\begin{proof}
We proceed to bound $\E[|F_{jk}^i|^r]$ for $r\geq 1$. By analogy with the proof of Proposition \ref{propTapprox}, we consider the quantities 
\begin{equation*}
    J_1^{OU}(r) := \max_{i,k} \sup_{s \in [t_{i-1},t_i]}\E_{A_0}\left[\left|X_s^k\right|\right], 
\end{equation*} 
and $J_2(r)$ as introduced in \eqref{J2}.   Set   $J^{OU}(r) := \max_{j,k}|a_{jk}|^rJ_1^{OU}(2r)J_2(r)$. As shown in \eqref{boundEofFjLinearCase} for the general linear case,  in an analogous manner, it is easy to check that for all $r \geq 1$, we have 
\begin{equation*}
    \E_{A_0}[|F_{jk}|^r]  = \E_{A_0}\left[\left|\frac{1}{\sqrt{n}}\sum_{i=1}^n F_{jk}^i\right|^r\right]\leq \left(2\sqrt{n}d^2\dn^{3/2}\right)^rJ^{OU}(r)^{1/2}.
\end{equation*}
Again,  since all moments of the process are bounded and using Stirling approximation for controlling the Gamma function, it leads to
\begin{equation}
    \E_{A_0}\left[|F_{jk}|^r\right] \leq (2\sqrt{n}d^2 \dn^{3/2})^r (C_b^{OU})^{r/2}r^{r/2},
\end{equation}
for a constant $C_b^{OU}$ independent of $d,n$ and $T$. Finally, using Markov's inequality with $f(x) = x^r$, we obtain the desired result.
\end{proof}

\begin{theorem}\label{TheoremTP_OU}
    Under \cref{AOU} and \cref{LminLmaxCinf},  for a fixed value of $\epsilon \in (0,1)$ and for all $\lambda > \lambda_2^{OU},$ it holds that 
    \begin{equation*}
        \P_{A_0}(\mathcal{T}') \geq 1-\epsilon.
    \end{equation*}
\end{theorem}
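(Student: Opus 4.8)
The plan is to mirror the proof of Theorem \ref{TheoremTP} from the general linear setting, replacing the role of $p$ by $d^2$ (the number of entries of the interaction matrix) and invoking the OU-specific tail bound of Proposition \ref{propTapproxOU} in place of Proposition \ref{propTapprox}. Since the entrywise supremum norm $\|\cdot\|_\infty$ selects the largest among the $d^2$ matrix entries, a union bound over the index pairs $1 \le j,k \le d$ will reduce the complement event to a single scalar tail probability, after which the sub-Gaussian estimate packaged in Proposition \ref{propTapproxOU} closes the argument.

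First I would write out the complement explicitly,
\[
\P_{A_0}(\mathcal{T}'^C) = \P_{A_0}\left( \left\| \frac{1}{n} \sum_{i=1}^n \left(A_0 \int_{t_{i-1}}^{t_i}(X_s - X_{t_{i-1}})\,ds\right) X_{t_{i-1}}^\star \right\|_\infty \ge \frac{\lambda}{4}\right),
\]
and then bound the entrywise maximum by a union bound over the $d^2$ entries. Observing that each entry of the empirical sum is controlled in absolute value by $|F_{jk}|/\sqrt{n}$, where $F_{jk}$ is the univariate process introduced just before Proposition \ref{propTapproxOU}, this yields $\P_{A_0}(\mathcal{T}'^C) \le d^2 \max_{1 \le j,k \le d} \P_{A_0}\!\left(|F_{jk}| \ge \sqrt{n}\lambda/4\right)$.

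Next I would substitute $u = \sqrt{n}\lambda/4$ into the bound \eqref{boundPt2OU} of Proposition \ref{propTapproxOU}, which after simplifying the factor $u^2 = n\lambda^2/16$ gives
\[
\P_{A_0}(\mathcal{T}'^C) \le d^2 \exp\left(-\frac{\lambda^2}{64 \exp(2)\, C_b^{OU}\, d^2 \dn^3}\right).
\]
Imposing that the right-hand side be at most $\epsilon$ and solving for $\lambda$ recovers exactly the stated threshold $\lambda_2^{OU} = 8e(C_b^{OU})^{1/2} d \dn^{3/2}\sqrt{\ln(d^2) + \ln(1/\epsilon)}$, since $\sqrt{\exp(2)}=e$, $\sqrt{64}=8$, and $\sqrt{d^2\dn^3}=d\dn^{3/2}$. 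Hence for every $\lambda > \lambda_2^{OU}$ one obtains $\P_{A_0}(\mathcal{T}') \ge 1-\epsilon$, as claimed.

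As for the main obstacle, there is essentially none left at this stage: the genuinely delicate estimates — the moment bound on the increments $X_s - X_{t_{i-1}}$ and the Stirling control of the Gamma function producing the sub-Gaussian tail — are already absorbed into Proposition \ref{propTapproxOU}, so the present argument is purely the bookkeeping of constants and the union bound. The only point deserving a comment is that, in contrast with the general linear case, no factor of $s^2$ enters the exponent; this reflects that the OU tail bound carries only $d^2\dn^3$ rather than $s^2 d^2 \dn^3$, so the matrix structure of the drift prevents the sparsity level from being tracked through this step, consistently with the discussion comparing Corollaries \ref{consistencyResult} and \ref{consistencyResult_OU}.
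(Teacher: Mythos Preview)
Your proposal is correct and follows essentially the same approach as the paper's proof: a union bound over the $d^2$ matrix entries reduces $\P_{A_0}(\mathcal{T}'^C)$ to $d^2 \max_{j,k} \P_{A_0}(|F_{jk}| \ge \sqrt{n}\lambda/4)$, after which Proposition \ref{propTapproxOU} and the choice $\lambda \ge \lambda_2^{OU}$ finish the argument. Your version is in fact more explicit about the constant-tracking than the paper's terse proof.
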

\begin{proof}Using the union bound property,
    \begin{equation*}
        \P_{A_0}(\mathcal{T}'^{C}) =  \P_{A_0}\left(\max_{1 \leq j,k \leq d}|F_{jk}| \geq \frac{\sqrt{n}\lambda}{4}\right) \leq d^2 \max_{1 \leq j,k \leq d}\P_{A_0}\left(|F_{jk}| \geq \frac{\sqrt{n}\lambda}{4}\right)
    \end{equation*}
Therefore, using Proposition \ref{propTapproxOU} and choosing $\lambda \geq \lambda_2^{OU},$  the theorem holds.
\end{proof}

\subsection{Proof of the concentration inequalities}{\label{s: proofCI}}
In this section, we will provide the proof of Proposition \ref{ConcentrationInequality} and Proposition \ref{PropCI_OU}.
\subsubsection{Proof of the concentration inequality for the general linear drift}
We start by introducing the following notation.
Given two functions $f,g \in C(\R_+, \R^d)$, let $a,b,c > 0$ such that $a < b< c$. Then, we define 
\begin{equation*}
    f_{[a,b]} \sqcup g_{[b,c]} := 
     \begin{cases}
       f(t) \quad &\text{if}\quad  a\leq t \leq b\\
       g(t) \quad&\text{if} \quad b < t \leq c.\\
     \end{cases}
\end{equation*}
Following the ideas in \cite{varvenne2019concentration}, the proof is based on the decomposition of the functional $F_X = F(X_{t_1},\dots, X_{t_n})$ into a sum of martingale increments, and derive conditional exponential moments for them.
\begin{proof}
    Let $W$ be a $d-$dimensional Brownian motion defined on $\left(\Omega, \mathcal{F}, \P\right).$ Let us denote by $\left\{\mathcal{F}_t\right\}_{t \geq 0}$ the natural filtration associated to $W$ and by $\P_W$ its distribution.
    
    For a  time $t_k > 0$, we denote by $\Tilde{w}_s$ the Brownian motion for $t_s > t_k$ and by $\P_W(d\Tilde{w})$ the distribution law of $\Tilde{w}$ conditioned to  $\mathcal{F}_{t_k}$.\\
    Given $F_X$, we start by constructing the associated martingale $M_k = \E[F_X | \mathcal{F}_{t_k}]$ such that \\$\sum_{k=1}^n \left(M_k - M_{k-1} \right)= F_X - \E[F_X]$.
    Under \cref{A1}, $X_t$ can be expressed as a measurable functional of the time $t$, of the initial condition $X_0$ and of the Brownian motion, i.e. $X_t := \Psi: \R^+ \times \R^d \times \mathcal{C}(\R^+; \R^d) \rightarrow \R^d $ such that $X_t =\Psi(t,x,(W_s)_{s \in [0,t]}) = \Psi_t(x,(W_s)_{s \in [0,t]})$. To streamline the notation in this proof and alleviate the complexity of the upcoming computations, we have opted to simplify by using \( |\cdot| \) to denote both the Euclidean norm on \(\mathbb{R}^d\) and the absolute value on \(\mathbb{R}\). \\ 
    For $k \geq 1$, we have that 
    {
    \begin{align}\label{initialCIequation}
        &|M_k - M_{k-1}| = |\E[F_X| \mathcal{F}_{t_k}] - |\E[F_X| \mathcal{F}_{t_{k-1}}]|\leq  \nonumber\\[1.5 ex]
        & \int_{\Omega} \big| F\big( \Psi_{t_1}(X_0, W_{[0,t_1]}),\dots,\Psi_{t_k}(X_0, W_{[0,t_k]}) ,\Psi_{t_{k+1}}(X_0, W_{[0,t_k]} \sqcup \tilde{w}_{[t_k,t_{k+1}]}),\dots, \nonumber\\[1.5 ex]
        &\Psi_{t_{n}}(X_0, W_{[0,t_k]} \sqcup \tilde{w}_{[t_k,t_n]})\big) \nonumber\\[1.5 ex]
        &- F\big(\Psi_{t_1}(X_0, W_{[0,t_1]}),\dots,\Psi_{t_{k-1}}(X_0, W_{[0,t_{k-1}]}),
        \Psi_{t_{k}}(X_0, W_{[0,t_{k-1}]} \sqcup \tilde{w}_{[t_{k-1},t_{k}]}),\dots, \nonumber\\[1.5 ex] &\Psi_{t_{n}}(X_0, W_{[0,t_{k-1}]} \sqcup \tilde{w}_{[t_{k-1},t_{n}]})\big)\big|\P_W(d\Tilde{w}) \nonumber \\[1.5 ex]
        &  \leq  \|F\|_{\text{Lip}} \int_{\Omega} \sum_{j=k}^n \Big| \Psi_{t_{j}}\left(X_0, W_{[0,t_{k}]} \sqcup \tilde{w}_{[t_{k},t_{j}]}\right) - \Psi_{t_{j}}\left(X_0, W_{[0,t_{k-1}]} \sqcup \tilde{w}_{[t_{k-1},t_{j}]}\right) \Big| \P_W(d\Tilde{w}).
    \end{align}
    }%
   Let us set $u:= j-k+1.$ Then, for $u \in \{1,\dots,n\}$, we define
   \begin{equation*}
      I_{t_u} := \Psi_{t_{u+k-1}}\left(X_0, (W_{s})_{s\in [0,t_{k}]}\sqcup  (\tilde{w}_{s})_{s\in [t_{k},t_{u+k-1}]}\right) 
\end{equation*}
and
\begin{equation*}
    \overline{I_{t_u}} := \Psi_{t_{u+k-1}}\left(X_0, (W_{s})_{s\in [0,t_{k-1}]}\sqcup  (\tilde{w}_{s})_{s\in [t_{k-1},t_{u+k-1}]}\right),
\end{equation*}
where $t_u = u\dn$.\\
Using the notation we have introduced, we can express $I_{t_u}, \overline{I_{t_u}}$ as
    \begin{equation*}
        I_{t_u} = X_0 - \int_{0}^{t_u} b(I_s)ds + \int_0^{t_k}dW_s +  \int_{t_k}^{t_{u+k-1}}d\tilde{w}_s,
    \end{equation*}
    \begin{equation*}
        \overline{I_{t_u}} = X_0 - \int_{0}^{t_u} b(\overline{I_s})ds +  \int_0^{t_{k-1} }dW_s +  \int_{t_{k-1}}^{t_{u+k-1}}d\tilde{w}_s .
    \end{equation*}
    Using the  change of variable $\tilde{s} =s - t_{k-1}$ and defining  the Brownian motions independent of $\mathcal{F}_{t_{k-1}}$, $(W^{(k)}_{s})_{s \geq 0}:= \left(W_{s+k-1} - W_{k-1}\right)_{s \geq 0}$ and  $\tilde{w}^{(k)}:= (\tilde{w}_{s+k-1} - \tilde{w}_{k-1})_{s \geq 0};$  we can express the difference of $I_{t_u}$ and $\overline{I_{t_u}}$  as
    \begin{equation*}
        I_{t_u} - \overline{I_{t_u}} = \int_{0}^{t_u} \left(b(\overline{I_s}) - b(I_s)\right) ds +  \int_{0}^{\dn }d(W^{(k)} - \tilde{w}^{(k)})_{\tilde{s}}.
    \end{equation*}
    We will proceed to control the differences $|I_{t_u} - \overline{I_{t_u}}|$. To do that, we  consider two different cases:  $u \geq 2$ and $1 \leq u \leq 2$.
    
    When $u\geq 2$, by \cref{A2}, we have that
    \begin{equation}\label{gronwall1}
        \frac{d}{dt} |I_{t_u} - \overline{I_{t_u}}|^2 = 2 \left|I_{t_u} - \overline{I_{t_u}}\right|^{\star}\left(b(\overline{I_{t_u}}) - b(I_{t_u})\right) \leq -2M|I_{t_u} - \overline{I_{t_u}}|^2.
    \end{equation}
    Applying Gronwall inequality to $\eqref{gronwall1}$, we obtain that, $\forall t_u \in [t_2,\infty),$
    \begin{equation}\label{ineq3}
        |I_{t_u} - \overline{I_{t_u}}| \leq |I_{t_2} - \overline{I_{t_2}}|\exp{\left(-M(t_u-t_2)\right)}.
    \end{equation}
    We consider now the case $1 \leq u\leq 2$. The Lipschitzianity of $b$ gathered in \cref{A1} ensures 
    \begin{align*}
        |I_{t_u} - \overline{I_{t_u}}| &= \left| \int_0^{t_u} \left(b(\overline{I_{s}}) - b(I_{s}) \right)ds +  \int_0^{\dn} d\left( W^{(k)} - \tilde{w}^{(k)}\right)_{\Tilde{s}}\right| \\
        & \leq L \int_0^{t_u} \left|I_{s} - \overline{I_{s}} \right|ds +  \left| \int_0^{\dn} d\left( W^{(k)} - \tilde{w}^{(k)}\right)_{\Tilde{s}}\right|.
    \end{align*}
    Now, using the Gronwall lemma in its integral form,
    \begin{align}\label{inequalityToapplyJensen}
        |I_{t_u} - \overline{I_{t_u}}| &\leq  \left| \int_0^{\dn} d\left( W^{(k)} - \tilde{w}^{(k)}\right)_{\Tilde{s}}\right| + \int_0^{t_u}  \left| \int_0^{\dn} d\left( W^{(k)} - \tilde{w}^{(k)}\right)_{\Tilde{s}} \right|L\exp(L(t_u - s))ds \nonumber\\
        &\leq  \left| \int_0^{\dn } d\left( W^{(k)} - \tilde{w}^{(k)}\right)_{\Tilde{s}}\right|\left( 1 + \int_0^{t_u}L\exp{(L(t_u-s))}ds\right) \nonumber\\
        &= \left| \int_0^{\dn} d\left( W^{(k)} - \tilde{w}^{(k)}\right)_{\Tilde{s}}\right|\exp(Lt_u),
    \end{align}
    where in the last line we have computed the integral explicitly.
    We observe that the martingale differences can be bounded in terms of $|I_{t_u} - \overline{I_{t_u}}|$, since according to \eqref{initialCIequation}, 
    \begin{equation*}
        |M_k - M_{k-1}|  \leq  \| F\|_{\text{Lip}}\left( \int_\Omega \sum_{u=1}^{n-k+1} | I_{t_u} - \overline{I_{t_u}} | \P_W(d\tilde{w})\right).
    \end{equation*}
    Combining $\eqref{inequalityToapplyJensen}$ for the case $u=1$ and $\eqref{ineq3}$ for the case $u>1$, we observe that, for all $p \geq 1$ we have that 
    \begin{align}\label{afterJensen}\nonumber
        |M_k - M_{k-1}|^p &\leq \| F\|_{\text{Lip}}^p 2^{p-1}\exp({Lp\dn})
        \left( \int_\Omega \left| \int_0^{\dn } d\left( W^{(k)} - \tilde{w}^{(k)}\right)_{\Tilde{s}}\right|\P_W(d\tilde{w}) \right)^p \\
        & + \| F\|_{\text{Lip}}^p 2^{p-1} \left( \sum_{u=2}^{n-k+1} \exp(-M(t_u-t_2))\right)^p \left( \int_\Omega  |I_{t_2} - \overline{I_{t_2}}|\P_W(d\tilde{w})\right)^p.
    \end{align}
    To simplify the notation, we  define the following expressions:
    \begin{equation*}
        \Psi_{n,k} := \sum_{u=2}^{n-k+1} \exp(-M(t_u-t_2)) \hspace{1.0cm }\text{ and } \hspace{1.0cm} G_{\dn}^{(k)} := \int_0^{\dn} d\left( W^{(k)} - \tilde{w}^{(k)}\right)_{\Tilde{s}}.
    \end{equation*}
    If we use the bound $\eqref{inequalityToapplyJensen}$ for $|I_{t_2} - \overline{I_{t_2}}|$ in  $\eqref{afterJensen}$, we can express the martingale differences as  
    \begin{equation*}
        |M_k - M_{k-1}|^p \leq 2^p\| F\|_{\text{Lip}}^p  \exp(Lpt_2) \Psi_{n,k}^p \left(\int_\Omega \left| G_{\dn}^{(k)} \right|\P_W(d\tilde{w}) \right)^p.
    \end{equation*}
    Now we proceed to find a bound for $\E[|M_k - M_{k-1}|^p| \mathcal{F}_{t_k - 1}]$ for $p \geq 2.$ If we denote  by $\{\mathcal{F}^{(k)}_t\}_{t \geq 0}$ the natural filtration associated to $W^{(k)}$,  since $W^{(k)}$ and $\Tilde{w}^{(k)}$ are independent from $\mathcal{F}_{t_{k-1}}$, we have that
    \begin{align}\label{ineq4}
        \E[|M_k - M_{k-1}|^p| \mathcal{F}_{t_{k - 1}}]  &\leq 
        2^p\| F\|_{\text{Lip}}^p \exp(Lpt_2) \Psi_{n,k}^p \E\left[\left(\int_\Omega  \left| G_{\dn}^{(k)}\right|\P_W(d\tilde{w})\right)^p\right] \nonumber\\
        &=
         2^p\| F\|_{\text{Lip}}^p  \exp(Lpt_2) \Psi_{n,k}^p
         \E\left[\E\left[\left| G_{\dn}^{(k)}\right|^p \Big|  \mathcal{F}^{(k)}_{t_1}\right]\right] \nonumber \\
         & = 2^p\| F\|_{\text{Lip}}^p  \exp(Lpt_2) \Psi_{n,k}^p \E\left[ \left| G_{\dn}^{(k)}\right|^p\right].
    \end{align}
       To control the conditional expectations of the increments of the martingale $M_k$, we  prove a bound for $\E[| G_{\dn}^{(k)}|^p ]$. We observe  that $G_{\dn}^{(k)} \overset{d}{=} \int_0^{\dn }dW_s$, which implies that $G_{\dn}^{(k)} \overset{d}{=} W_{\dn}$ being $(W_t)_{t \geq 0}$ a $d-$dimensional Brownian motion, and  $W_t^i$ the $i$th component of the process. 
    Since $-W_{\dn}^1 \overset{d}{=} W_{\dn}^1$, 
    \begin{align*}
        \P\left( \left| G_{\dn}^{(k)}\right| \geq x \right) &\leq d \left[ \P\left(  -\left(W_{\dn}^1\right) \geq \frac{x}{\sqrt{d}} \right) + \P\left(  W_{\dn}^1 \geq \frac{x}{\sqrt{d}} \right)\right] 
         \leq 2d \hspace{0.1cm}\P\left( W_{\dn}^1 \geq \frac{x}{\sqrt{d}} \right). 
    \end{align*}
   Using Markov's inequality and choosing $\eta =x/(\sqrt{d}\dn) $, we obtain that 
    \begin{align*}
        \P\left(\left| G_{\dn}^{(k)}\right| \geq x \right) &\leq 2d\E\left[\exp\left(\eta W_{\dn}^1\right)\right]\exp\left(-\eta xd^{-1/2}\right) \\ &= 2d\exp\left(\frac{\eta^2\dn}{2} - \frac{\eta x}{\sqrt{d}} \right) 
        =  2d\exp\left(-\frac{x^2}{2d \dn}\right).
    \end{align*}
    Now, using that $\E[X] = \int_0^{\infty} \P(X \geq x)dx$ for a non-negative random variable, and making the change of variables $x=s^p$, we have that 
    \begin{equation*}
        \E\left[\left| G_{\dn}^{(k)}\right|^p\right] \leq 2  d  p \int_{0}^\infty s^{p-1}\exp\left({-\frac{s^2}{2d\dn}}\right) ds.
    \end{equation*}
    If we proceed with the following change of variables $\tilde{s} = s^2/(2d\dn)$, we obtain that   
    \begin{equation}
        \E\left[ \left| G_{\dn}^{(k)}\right|^p\right] \leq dp\hspace{0.07cm}\Gamma\left(\frac{p}{2}\right)(2d\dn)^{\frac{p}{2}},
    \end{equation}
    where  $\Gamma(y) = \int_0^\infty \exp(-s)s^{y-1}ds.$
    From  $\eqref{ineq4}$, we have  that 
    \begin{equation*}
        \E[|M_k - M_{k-1}|^p| \mathcal{F}_{t_k - 1}]^{\frac{1}{p}} \leq 
        2\| F\|_{\text{Lip}}  \exp(2L\dn) \Psi_{n,k}\sqrt{d\dn}\left(2d\hspace{0.07cm}p\hspace{0.07cm}\Gamma\left(\frac{p}{2}\right) \right)^{\frac{1}{p}}.
    \end{equation*}
    Now, applying Lemma 3.1 of \cite{varvenne2019concentration} and using the fact that $\E[M_k - M_{k-1}|\mathcal{F}_{t_{k-1}}]=0$, we obtain that for any $\eta >0,$
    \begin{equation*}
        \E\left[\exp\left(\eta(M_k - M_{k-1})\right)|\mathcal{F}_{t_{k-1}}\right] \leq \exp{\left(16\eta^2 \| F\|_{\text{Lip}}^2\exp(4L\dn) \Psi_{n,k}^2 d^2 \dn  \right)}.
    \end{equation*}
    Now we observe that 
    \begin{align*}
        \E\left[\exp(\eta M_n)\right] &= \E\left[\exp(\eta M_{n-1})\E \left[ \exp(\eta (M_n - M_{n-1}))\right|\mathcal{F}_{t_{n-1}}]\right] \\
        & \leq  \E\left[\exp(\eta M_{n-1})\right]\exp{\left(16\eta^2 \| F\|_{\text{Lip}}^2\exp(4L\dn) \Psi_{n,n}^2 d^2 \dn  \right)}.
    \end{align*}
    Applying the same procedure recursively, we obtain that 
    \begin{align*}
          \E\left[\exp(\eta \left(F_Y - \E[F_Y]\right))\right] &=  \E\left[\exp(\eta \left(M_n - M_0\right))\right] \\&\leq \exp{\left(16\eta^2 \| F\|_{\text{Lip}}^2\exp(4L\dn)  d^2 \dn  \sum_{k=1}^n \Psi_{n,k}^2 \right)}.
    \end{align*}
    Since
    $$\Psi_{n,k} = \sum_{u=2}^{n-k+1} \exp(-M(t_u - t_2)) = \sum_{\tilde{u}=0}^{n-k-1}\exp(-\tilde{u}M \dn) \leq (1-\exp(-M \dn))^{-1},$$ then $\sum_{k=1}^n  \Psi_{n,k}^2 \leq n/(1-\exp(-M \dn))^2$ and  we conclude that
    \begin{equation*}
        \E\left[\exp{(\eta(F_Y - \E[F_Y]))}\right] \leq \exp{\left(\frac{16\eta^2 d^2 \| F\|_{\text{Lip}}^2 n \dn \exp(4L\dn) }{(1-\exp(-M \dn))^2}\right)}.
    \end{equation*}
    Now, if we apply Markov's inequality and set $\eta=\frac{r(1-\exp(-M \dn))^2}{32 n \dn d^2 \| F\|_{\text{Lip}}^2  \exp(4L\dn)}$, we obtain that 
    \begin{equation*}
        \P\left(F_Y - \E[F_Y] > r\right) \leq \exp{\left(\frac{-r^2 
        (1-\exp(-M \dn))^2 }{64d^2 \| F\|_{\text{Lip}}^2 n \dn \exp(4L\dn)}\right)}.
    \end{equation*}
    Finally, in case the functional $F$ is of the form $F_X=\frac{1}{n}\sum_{i=1}^n f(X_{t_i})$ for a Lipschitz function $f: \R^d \rightarrow \R$, we obtain the desired result.
\end{proof}

\subsubsection{Proof of the concentration inequality for the  OU case}
\begin{proof}
    For an element $v \in \R^d, \|v\|_2=1$, we define the centred Gaussian process $Y_t^v = v^\star X_t$ whose covariance kernel is given by $\E[Y_t^vY_s^v] = \rho_v(|t-s|)$, with $\rho_v(r) := v^\star \exp(-rA_0) C_\infty v$.
    We observe that $(Y_t^v)_{t \in [0,T]}$ can be considered as an isonormal Gaussian process indexed by a separable Hilbert space $\mathbb{H}$ whose scalar product is induced by the covariance kernel of $(Y_t^v)_{t \in [0,T]}$. In particular, we can write $Y_t^v = B(h_t)$ and $\langle h_t,h_s\rangle_{\mathbb{H}} = \rho(|t-s|)$. Now,  we define the quantity 
\begin{equation*}
Z_n^v := v^\star (C_T - C_\infty) v = \frac{1}{n} \sum_{i=1}^n \left((Y_{t_{i-1}}^v)^2 - \E[(Y_{t_{i-1}}^v)^2] \right).
\end{equation*}
We note that $Z_n^v$ is an element of the second order Wiener chaos. Hence, $Z_n^v$ has a Lebesgue density and condition $(ii)$ of Theorem $\ref{TheoremIvan}$ is satisfied. Moreover, since $L^{-1} Z_n^v = -(1/2)Z_n^v$, we have that 
\begin{gather*}
    \langle DZ_n^v, -DL^{-1} Z_n^v \rangle_{\mathbb{H}} = \frac{1}{2}\langle DZ_n^v, DZ_n^v \rangle_{\mathbb{H}} = \frac{2}{n^2} \sum_{i,j=1}^n \left\langle D(Y_{t_{i-1}}^v), D(Y_{t_{j-1}}^v) \right\rangle_{\mathbb{H}} \\
     \leq \frac{2}{n^2} \sum_{i,j=1}^n  |Y_{t_{i-1}}^vY_{t_{j-1}}^v| \left|\rho_v\left(t_{i-1} - t_{j-1}\right)\right| \leq \frac{2}{n^2}\sum_{i,j=1}^n |Y_{t_{i-1}}^v|^2 \left|\rho_v\left(t_{i-1} - t_{j-1}\right)\right|.
\end{gather*}
Adding and substracting $\E[ (Y_{t_{i-1}}^v)^2]$ we obtain 
\begin{gather*}
     \langle DZ_n^v, -DL^{-1} Z_n^v \rangle_{\mathbb{H}} \leq \frac{2}{n^2} \sum_{i,j=1}^n  \left(|Y_{t_{i-1}}^v|^2  - \E\left[ (Y_{t_{i-1}}^v)^2\right]\right)\left|\rho_v\left(t_{i-1} - t_{j-1}\right)\right|\\ + \frac{2}{n^2} \sum_{i,j=1}^n \E\left[ (Y_{t_{i-1}}^v)^2\right]\left|\rho_v\left(|t_{i-1} - t_{j-1}|\right)\right| =I_1 + I_2.
\end{gather*}
First, we work on $I_1$. Since $t_{i-1} = (i-1)\dn$ and the same applies to $t_{j-1}$, a change of variable leads us to  
\begin{gather*}
    I_1 \leq \frac{4}{n^2}\sum_{i=1}^n\sum_{k=0}^{n} \left(|Y_{t_{i-1}}^v|^2  - \E\left[ (Y_{t_{i-1}}^v)^2\right]\right)\left|\rho_v\left(\dn k\right)\right| 
    = \frac{4}{n}\sum_{k=0}^nZ_n^v |\rho_v(\dn k)|.
\end{gather*}
Similarly, expression $I_2$ can be controlled as
\begin{gather*}
    I_2 \leq \frac{4}{n^2}\sum_{i=1}^n\sum_{k=0}^{n}  \E\left[ (Y_{t_{i-1}}^v)^2\right]\left|\rho_v\left(\dn k\right)\right| 
    \leq \frac{4}{n}\sum_{k=0}^n\rho_v(0) |\rho_v(\dn k)|.
\end{gather*}
Using that $|\rho_{v}(\dn k )| \leq \| \exp(-\dn k A_0) \|_{\text{op}} \| C_{\infty} \|_{\text{op}} \leq \mathfrak{p}_0 \mathfrak{l}_{\max}\exp(-\mathfrak{m}\dn k) ,$ we observe that condition $(ii)$ of Theorem \ref{TheoremIvan}
is satisfied since
\begin{gather*}
     \langle DZ_n^v, -DL^{-1} Z_n^v \rangle_{\mathbb{H}} \leq \frac{4}{n}\sum_{k=0}^n |\rho_{v}(\dn k )|(Z_n^v + \rho_v(0)) \\\leq 
     \frac{4}{n} \mathfrak{p}_0 \mathfrak{l}_{\max} \sum_{k=0}^n \exp{(-\dn \mathfrak{m} k)}(Z_n^v + \mathfrak{l}_{\max}) \leq \alpha Z_n^v + \beta,
\end{gather*}
for $\alpha = 4n^{-1}\mathfrak{p}_0\mathfrak{l}_{\max}a$ and $\beta = 4n^{-1}\mathfrak{p}_0\mathfrak{l}_{\max}^2a$, where $a = (1 - \exp(-(n+1)\mathfrak{m}\dn))/(1  - \exp(-\mathfrak{m}\dn)).$ 
Therefore, result \eqref{CIOUbruta} follows.

When $\dn \rightarrow 0$, bounding from the infinite sum and applying a first order Taylor expansion, we observe that  
$\sum_{k=0}^n \exp{(-\dn \mathfrak{m} k)} \leq (1 - \exp(-\mathfrak{m}\dn))^{-1} \leq (\mathfrak{m}\dn)^{-1}$. Then, considering $a = (\mathfrak{m}\dn)^{-1}$ completes the proof. 
\end{proof}



\subsubsection*{Funding}
Chiara Amorino and Mark Podolskij were supported by the ERC Consolidator Grant 815703 “STAMFORD: Statistical Methods for High Dimensional Diffusions”. Francisco Pina's research is funded by the PRIDE Grant “MATHCODA: Mathematical Tools for Complex Data Structures”.



\bibliography{bibliografia}       


\end{document}